\newtheorem{thm}{Theorem}[section]
\newtheorem{prop}[thm]{Proposition}
\newtheorem{lem}[thm]{Lemma}
\newtheorem{cor}[thm]{Corollary}
\newtheorem{definition}[thm]{Definition}
\newtheorem{rem}[thm]{Remark}
\newcommand{\hK}{hyper-K\"ahler\ }
\newcommand{\HK}{Hyper-K\"ahler\ }
\newcommand{\A}{A_\infty}
\newcommand{\Z}{\mathbb{Z}}
\newcommand{\N}{\mathbb{N}}
\newcommand{\R}{\mathbb{R}}
\newcommand{\C}{\mathbb{C}}
\begin{document}
\title{The nonuniqueness of the tangent cones at infinity of 
Ricci-flat manifolds}
\author{Kota Hattori}
\date{}
\maketitle
{\begin{center}
{\it Keio University\\
3-14-1 Hiyoshi, Kohoku, Yokohama 223-8522, Japan\\
hattori@math.keio.ac.jp}
\end{center}}
\maketitle
{\abstract It is shown by Colding and Minicozzi 
the uniqueness of the tangent cones at infinity of Ricci-flat manifolds 
with Euclidean volume growth which has at least one 
tangent cone at infinity with a smooth cross section. 
In this paper we raise an example of the Ricci-flat manifold 
implying that the assumption for the volume growth in the above result 
is essential. 
More precisely, we construct a complete Ricci-flat manifold 
of dimension $4$ with non-Euclidean volume growth 
who has infinitely many tangent cones at infinity 
and one of them has a smooth cross section.}

\section{Introduction}
For a complete Riemannian manifold $(X,g)$ with nonnegative 
Ricci curvature, 
it is shown by Gromov's Compactness Theorem that if one 
take a sequence 
\begin{align*}
a_1>a_2>\cdots >a_i >\cdots >0
\end{align*}
such that $\lim_{i\to \infty}a_i = 0$, then there is a subsequence 
$\{ a_{i(j)}\}_j$ such that 
$(X,a_{i(j)} g, p)$ converges to a pointed metric space $(Y,d,q)$ as $j\to \infty$ 
in the sense of the pointed Gromov-Hausdorff topology 
\cite{gromov1981}\cite{gromov2007}. 
The limit $(Y,d,q)$ is called the tangent cone at infinity of $(X,g)$.
In general, the pointed Gromov-Hausdorff limit might 
depend on the choice of $\{ a_i\}_i$ or its subsequences. 

The tangent cone at infinity is said to be unique if the isometry classes of 
the limits are independent of the choice of $\{ a_i\}$ and its subsequences, 
and Colding and Minicozzi showed the next uniqueness theorem 
under the certain assumptions. 
\begin{thm}[\cite{Colding-Minicozzi2014}]
Let $(X,g)$ be a Ricci-flat manifold with Euclidean volume growth, 
and suppose that one of the tangent cone at infinity has a smooth cross section. 
Then the tangent cone at infinity of $(X,g)$ is unique. 
\label{thm colding minicozzi}
\end{thm}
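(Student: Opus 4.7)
The plan is to implement a \L{}ojasiewicz--Simon uniqueness argument at infinity, modelled on Simon's classical uniqueness theorems for tangent cones of minimal surfaces and for blow-ups of geometric flows, transplanted into the Gromov--Hausdorff category.

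First, I would organize the space of tangent cones. Euclidean volume growth combined with Bishop--Gromov monotonicity gives that $r^{-n}\mathrm{Vol}(B_r(p))$ decreases to a positive limit $V_\infty$, and the Cheeger--Colding almost-rigidity theorems then force every tangent cone at infinity to be a metric cone $C(Y)$ whose cross-section has a fixed volume determined by $V_\infty$. The set $\mathcal{T}_\infty$ of tangent cones at infinity is therefore a compact connected subset of a space of normalized metric cones, and the goal reduces to showing that the smooth cone $C(Y_0)$ is an isolated point of $\mathcal{T}_\infty$.

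Second, I would introduce a monotone scalar functional $\mathcal{E}$ that detects cones. Following Colding's framework, let $b$ be a suitable power of the minimal positive Green's function on $X$, so that the level sets $\Sigma_r=\{b=r\}$ play the role of geodesic spheres at infinity, and let $\mathcal{E}(r)$ be a weighted volume-like quantity on $\Sigma_r$ (for instance $r^{1-n}\int_{\Sigma_r}|\nabla b|^3$), normalized so that $\mathcal{E}$ is nonincreasing in $r$ with $\mathcal{E}'\equiv 0$ precisely when the rescaled slice is conical. The rescaling trajectory then becomes a gradient-type flow of $\mathcal{E}$, and its $\omega$-limit points are exactly the tangent cones at infinity.

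Third, at the critical point $C(Y_0)$ the linearization of the conical equation for $\mathcal{E}$ is an elliptic operator on the compact smooth manifold $Y_0$, hence Fredholm with finite-dimensional kernel of Jacobi fields. Using analyticity of the underlying geometric functional I would establish a \L{}ojasiewicz--Simon inequality
\[
|\mathcal{E}(g)-\mathcal{E}(g_{Y_0})|^{1-\theta}\leq C\,\|\nabla\mathcal{E}(g)\|
\]
for slice metrics $g$ in a $C^{2,\alpha}$-neighborhood of $g_{Y_0}$, and combine it with the monotonicity of Step~2 to show that any trajectory entering this neighborhood has finite $\mathcal{E}$-length and therefore converges to $C(Y_0)$. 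Since $C(Y_0)\in\mathcal{T}_\infty$ by hypothesis, the connectedness of $\mathcal{T}_\infty$ then forces $\mathcal{T}_\infty=\{C(Y_0)\}$.

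The principal obstacle is the third step: proving the \L{}ojasiewicz inequality in a noncompact setting where the parameter $\log r$ does not evolve a metric by a genuine parabolic PDE. One must set up weighted H\"older or Sobolev spaces on the asymptotic cone in which the linearized operator is Fredholm \emph{and} in which the nonlinear error terms are controlled uniformly. It is here that the Euclidean volume growth hypothesis becomes essential, since it is what guarantees that $b$ behaves asymptotically like a radial coordinate and that the weights assigning decay at infinity are compatible with the conical model; the present paper shows precisely that dropping this hypothesis invalidates the conclusion.
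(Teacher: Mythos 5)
The paper does not prove this statement: Theorem \ref{thm colding minicozzi} is quoted verbatim from Colding--Minicozzi and is used here only as motivation, so there is no in-paper proof to compare against. Measured against the actual argument in the cited work, your outline is essentially faithful: Colding--Minicozzi do work with $b$ a power of the minimal positive Green's function (Euclidean volume growth guaranteeing nonparabolicity and that $b$ is asymptotically a radial coordinate), with Colding's monotone quantity $r^{1-n}\int_{\{b=r\}}|\nabla b|^3$ whose constancy characterizes cones, with the Cheeger--Colding structure theory to know that all tangent cones at infinity are metric cones of fixed cross-sectional volume, and with an infinite-dimensional \L{}ojasiewicz--Simon inequality near the smooth cross-section to convert monotonicity into finite length of the rescaling trajectory and hence convergence. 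So the strategy is the right one.

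That said, as submitted this is a plan rather than a proof, and the gap is exactly where you locate it: the \L{}ojasiewicz--Simon inequality in Step 3 is not a routine consequence of ``analyticity of the underlying geometric functional,'' because the rescaling trajectory is not the gradient flow of $\mathcal{E}$ on a fixed Banach manifold of metrics on $Y_0$; one must first prove that closeness of the level sets $\{b=r\}$ to the cross-section in the Gromov--Hausdorff sense upgrades to $C^{2,\alpha}$ closeness (this is where the smoothness of one cross-section and Einstein regularity theory enter), and then control the discrepancy between $\|\nabla\mathcal{E}\|$ and the actual speed of the trajectory by integral estimates on annuli. Two smaller points: the limit $V_\infty$ of $r^{-n}\mathrm{Vol}(B_r(p))$ being \emph{positive} is the definition of Euclidean volume growth, not a consequence of Bishop--Gromov (which only gives monotonicity); and in the final step connectedness of $\mathcal{T}_\infty$ is not what closes the argument --- rather, the finite-length conclusion shows directly that once the trajectory enters a small neighborhood of $C(Y_0)$ (which it does along a subsequence, since $C(Y_0)$ is a tangent cone) it converges to $C(Y_0)$, so every tangent cone equals $C(Y_0)$. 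None of this invalidates the outline, but the inequality itself is the theorem, and the present paper's contribution is precisely to show that without Euclidean volume growth no such mechanism can exist, since $\mathcal{T}(X,g)$ can be a circle.
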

Among the assumptions in Theorem \ref{thm colding minicozzi}, 
the Ricci-flat condition is essential since there are several examples 
of complete Riemannian manifolds with nonnegative Ricci curvature 
and Euclidean volume growth, of whom one of the tangent cones at infinity 
has smooth cross section, but the tangent cones at infinity is not unique 
\cite{perelman1997}\cite{Colding-Naber2013}. 

Here, let $\mathcal{T}(X,g)$ be the set of all of the isometry classes 
of the tangent cones at infinity of $(X,g)$. 
In this paper, the isometry between pointed metric spaces means 
the bijective map preserving the metrics and the base points. 
It is known that $\mathcal{T}(X,g)$ is closed with respect to 
the pointed Gromov-Hausdorff topology, and has the natural 
continuous $\R^+$-action defined by the rescaling of metrics. 
The uniqueness of the tangent cones at infinity means that 
$\mathcal{T}(X,g)$ consists of only one point.

In this paper, we show that the assumption for the volume growth 
in Theorem \ref{thm colding minicozzi} is essential. 
More precisely, we obtain the next main result. 
\begin{thm}
There is a complete Ricci-flat manifold $(X,g)$ of dimension $4$ 
such that $\mathcal{T}(X,g)$ is homeomorphic to $S^1$. 
Moreover, $\R^+$-action on $\mathcal{T}(X,g)$ fixes 
$(\R^3,d_0^\infty,0)$, $(\R^3,h_0,0)$, $(\R^3,h_1,0)$, 
where $h_0=\sum_{i=1}^3 (d\zeta_i)^2$ is the Euclidean metric, 
$h_1=\frac{1}{|\zeta|}h_0$, and $d_0^\infty$ is the completion of 
the Riemannian metric 
\begin{align*}
\int_0^\infty \frac{dx}{|\zeta-(x^\alpha,0,0)|}\cdot h_0,
\end{align*}
and $\R^+$ acts freely on
\begin{align*}
\mathcal{T}(X,g)\backslash \{ (\R^3,d_0^\infty,0),
\ (\R^3,h_0,0),\ (\R^3,h_1,0)\}. 
\end{align*}
Here, $\zeta=(\zeta_1,\zeta_2,\zeta_3)$ is 
the Cartesian coordinate on $\R^3$.
\label{main result}
\end{thm}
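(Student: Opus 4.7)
The plan is to build $(X,g)$ as a hyperk\"ahler four-manifold via the Gibbons-Hawking ansatz over $\R^3$, with an infinite sparse array of monopoles along the positive $\zeta_1$-axis. Take $p_n=(n^\alpha,0,0)$ for $n\ge 1$ and a fixed $\alpha>1$; this growth rate is what makes the harmonic potential $V(\zeta)=\sum_{n\ge 1}|\zeta-p_n|^{-1}$ converge on $\R^3\setminus\{p_n\}_n$ and decay to $0$ at infinity. The standard Gibbons-Hawking construction, with potential $V$ and connection $1$-form satisfying $dA=*_3\,dV$, produces a smooth complete hyperk\"ahler (hence Ricci-flat) $S^1$-bundle $X$ over $\R^3\setminus\{p_n\}_n$ which extends smoothly across the monopoles. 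A Riemann-sum comparison yields the asymptotic $V(\zeta)\approx V_\infty(\zeta):=\int_0^\infty|\zeta-(x^\alpha,0,0)|^{-1}\,dx$ for $|\zeta|$ large, and the positive $(1/\alpha-1)$-homogeneity of $V_\infty$ makes $V_\infty h_0$ a genuine metric cone on $\R^3$. A routine volume-integral estimate shows $(X,g)$ has non-Euclidean volume growth, so the hypothesis of Theorem~\ref{thm colding minicozzi} is indeed violated.

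To identify $\mathcal{T}(X,g)$, I would use the fact that the $S^1$-fibers of $(X,ag)$ collapse---their length is of order $\sqrt{a/V}$---on any bounded region of the rescaled base where $V$ stays bounded below. Hence every pointed Gromov-Hausdorff limit of $(X,a_ig,p)$ coincides with a limit of the collapsed base metrics $(\R^3, a_iVh_0,\zeta_0)$ after a secondary rescaling $\zeta\mapsto\zeta/b_i$. Matching the secondary rescaling with the asymptotic homogeneity of $V$ via $b_i=a_i^{-\alpha/(1+\alpha)}$ identifies the generic tangent cone at infinity as $(\R^3, V_\infty h_0,0)=(\R^3,d_0^\infty,0)$. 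The other two distinguished cones arise from non-generic choices of $(a_i,b_i)$: a subsequence tuned so that one monopole term dominates the rescaled $V$ near the origin yields $V\sim|\zeta|^{-1}$ and the cone $(\R^3,h_1,0)$, while a subsequence that pushes every monopole effectively off to infinity and leaves only a constant background gives the flat cone $(\R^3,h_0,0)$. Each of these three limits is scale-invariant, hence a fixed point of the $\R^+$-action on $\mathcal{T}(X,g)$.

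The intermediate tangent cones come from interpolating among these three regimes: as the ratio of $b_i$ to $a_i^{-\alpha/(1+\alpha)}$ varies continuously, the rescaled $V$ presents itself as a weighted combination of a constant piece, a single-monopole piece, and the continuous-distribution piece $V_\infty$, producing a one-parameter family of collapsed base limits that compactifies to a circle with the three distinguished cones as its ``corners''. Showing that $\R^+$ acts on this family by translation of the secondary-rescaling parameter realizes the prescribed free action on the complement, and a direct check confirms that exactly these three cones are scale-invariant. The main technical obstacle, beyond the usual care required for Gromov-Hausdorff collapse of $S^1$-fibrations, is the precise identification of $\mathcal{T}(X,g)$: one needs uniform control of the error $V-V_\infty$ along arbitrary sequences $a_i\to 0$ to rule out exotic subsequential limits, together with a careful analysis of the metric completion of the base in the degenerate regimes where $V\to 0$, in order to recover the stated pointed metric spaces.
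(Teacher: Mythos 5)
Your construction is not the paper's, and the difference is fatal rather than cosmetic. You take the \emph{full} monopole set $\{(n^\alpha,0,0);\ n\ge 1\}$. For that choice the tangent cone at infinity is unique: after the normalization $N\sim a^{-1/(1+\alpha)}$ (which is forced, see below) the rescaled potential is a Riemann sum for $\int_0^\infty |\zeta-(x^\alpha,0,0)|^{-1}dx$ with mesh $1/N\to 0$, and the comparison of Proposition \ref{conv1} (with no gaps) shows it converges to $\Phi_0^\infty$ uniformly on each $K(R,D)$ for \emph{every} sequence $a_i\to 0$; hence every subsequential limit is $(\R^3,d_0^\infty,0)$ and $\mathcal{T}(X,g)$ is a single point, not $S^1$. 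The paper instead uses the lacunary set $\Lambda=\bigcup_n\{(k^\alpha,0,0);\ K_{2n}\le k<K_{2n+1}\}$ with $K_n/K_{n-1}\to\infty$ (Section \ref{sec const}); the deleted blocks are precisely what makes different scales $a_i$ see different windows $[S,T]$ of the continuum, producing the arcs $\{(\R^3,sd_1^\infty,0)\}$, $\{(\R^3,sd_0^1,0)\}$, $\{(\R^3,s(1+\frac{1}{|\zeta|})h_0,0)\}$ that close up into a circle (Theorem \ref{ex1}). This is the ``mixing'' of an $\A$-type space with flat space mentioned in the introduction, and it is the one idea your proposal is missing.

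Three further problems. First, your mechanism for producing $(\R^3,h_0,0)$ and $(\R^3,h_1,0)$ --- tuning a ``secondary rescaling'' $b_i$ independently of $a_i$ --- is not legitimate: the tangent cone at infinity is the pointed Gromov--Hausdorff limit of $(X,a_ig,p)$, a fixed isometry class for each $i$; the dilation $I_N$ is an isometry between presentations of the same base space and cannot change the limit, so $b_i$ is determined up to bounded factors by nondegeneracy of the limit. In the paper these two cones occur only because of the gaps, via Theorems \ref{main3} and \ref{main4}. Second, your assertion that the homogeneity of $V_\infty$ makes $V_\infty h_0$ ``a genuine metric cone'' is false and contradicts Proposition \ref{nonpolar}: the dilation scales $d_0^\infty$ by $\lambda^{(\alpha+1)/(2\alpha)}$, but $0$ is not a pole (segments of $\mathbf{l}$ have infinite $d_0^\infty$-length, and any path meeting the axis is strictly shortened by the projection of Lemma \ref{lowreplace}), so $(\R^3,d_0^\infty)$ is not a metric cone over any metric space. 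Third, even with the correct $\Lambda$, the hard half of Theorem \ref{main result} is showing that \emph{nothing else} occurs and that the circle is embedded: the paper's proof of Theorem \ref{ex1} runs an exhaustive case analysis on the possible asymptotics of $a_i^{1/(1+\alpha)}K_n$ to pin down all subsequential limits, and then rules out accidental isometries among the spaces $sd_1^\infty$, $sd_0^1$, $s(1+\frac{1}{|\zeta|})h_0$ (via the table of tangent cones at $0$ and at infinity together with the non-polarity of $d_0^\infty$) to conclude that $\mathcal{T}(X,g)\cong S^1$ and that $\R^+$ acts freely off the three scale-invariant points. None of this is addressed by the proposal.
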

Here, we mention more about the metric spaces appearing 
in Theorem \ref{main result}. 
For $0\le S<T\le\infty$, denote by $d_S^T$ the metric on $\R^3$ 
induced by the Riemannian metric 
\begin{align*}
\int_S^T \frac{dx}{|\zeta-(x^\alpha,0,0)|}\cdot h_0. 
\end{align*}
For $(X,g)$ in Theorem \ref{main result}, we show that 
$\mathcal{T}(X,g)$ contains $\{ (\R^3,d_0^T,0); T\in\R^+ \}$, 
$\{ (\R^3,d_S^\infty,0); S\in\R^+ \}$ and 
$\{ (\R^3,h_0 + \theta h_1,0); \theta\in\R^+\}$. 
Here, we can check easily that $d_0^T$ and $d_S^\infty$ 
are homothetic to $d_0^1$ and $d_1^\infty$, respectively. 
We can show that 
\begin{align*}
(\R^3,d_0^T,0) &\xrightarrow[T\to \infty]{GH} (\R^3,d_0^\infty,0), \quad 
(\R^3,d_0^T,0) \xrightarrow[T\to 0]{GH} (\R^3,h_1,0), 
\\
(\R^3,d_S^\infty,0) &\xrightarrow[S\to \infty]{GH} (\R^3,h_0,0), \quad 
(\R^3,d_S^\infty,0) \xrightarrow[S\to 0]{GH} (\R^3,d_0^\infty,0), 
\\
(\R^3,h_0 + \theta h_1,0) &\xrightarrow[\theta\to \infty]{GH} (\R^3,h_1,0), \quad 
(\R^3,h_0 + \theta h_1,0) \xrightarrow[\theta\to 0]{GH} (\R^3,h_0,0).
\end{align*}
Both of $(\R^3,h_0)$ and $(\R^3,h_1)$ 
can be regarded as the Riemannian cones with respect to the dilation 
$\zeta\mapsto \lambda\zeta$ on $\R^3$. 
Although the dilation also pulls back 
$d_0^\infty$ to $\lambda^\frac{\alpha + 1}{2\alpha}d_0^\infty$, 
$(\R^3,d_0^\infty)$ does not become the metric cone 
with respect to this dilation 
since $\mathbf{l} = \{ (t,0,0)\in\R^3;\ t\ge 0\}$ is not a ray. 
In fact, any open intervals contained in $\mathbf{l}$ 
have infinite length with respect to $d_0^\infty$.

In general, tangent cones at infinity of 
complete Riemannian manifolds with nonnegative Ricci curvature 
and Euclidean volume growth are metric cones \cite{Cheeger-Colding1996}. 
In our case, it is shown in Section \ref{geometry} that 
$(\R^3,d_0^\infty,0)$ never become the metric cone of 
any metric space.

The Ricci-flat manifold $(X,g)$ appeared in Theorem \ref{main result} 
is one of the \hK manifolds of type $\A$, 
constructed by Anderson, Kronheimer and LeBrun in \cite{AKL1989} 
applying Gibbons-Hawking ansatz, 
and by Goto in \cite{goto1994} as 
\hK quotients. 
Combining Theorems \ref{thm colding minicozzi} and \ref{main result}, 
we can see that the volume growth of $(X,g)$ should not be Euclidean. 
In fact, the author has computed the volume growth of 
the \hK manifolds of type $\A$ in \cite{hattori2011}, 
and showed that they are always 
greater than cubic growth and less than Euclidean growth. 
To construct $(X,g)$, we ``mix'' the \hK manifold of type $\A$ 
whose volume growth is $r^a$ for some $3<a<4$, and 
$\R^4$ equipped with the standard \hK structure. 
Unfortunately, the author could not compute the volume growth of 
$(X,g)$ in Theorem \ref{main result} explicitly.

In this paper, we can show that a lot of metric spaces may arise 
as the Gromov-Hausdorff limit of \hK manifolds of type $\A$. 
Let 
\begin{align*}
I\in \mathcal{B}_+(\R^+) 
:= \{ J\subset\R^+;\ J\ \mbox{{\rm is a Borel set 
of nonzero Lebesgue measure}}\}
\end{align*}
and denote by $d_I$ the metric on 
$\R^3$ induced by the Riemannian metric 
$\int_I \frac{dx}{|\zeta-(x^\alpha,0,0)|}\cdot h_0$. 
Then we have the following result. 

\begin{thm}\label{main result2}
There is a complete Ricci-flat manifold $(X,g)$ of dimension $4$ 
such that $\mathcal{T}(X,g)$ contains 
\begin{align*}
\{ (\R^3,d_I,0);\ I\in \mathcal{B}_+(\R^+)\} / \mbox{{\rm isometry}}.
\end{align*}
\end{thm}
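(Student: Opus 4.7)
The plan is to construct $(X,g)$ as a hyper-K\"ahler manifold of type $\A$ via the Gibbons-Hawking ansatz applied to a renormalized harmonic function
\[
V(\zeta)=c+\sum_{n}\bigl(|\zeta-p_n|^{-1}-|p_n|^{-1}\bigr),
\]
whose monopole set $\{p_n\}$ on the positive $\zeta_1$-axis is chosen far more richly than in Theorem \ref{main result}. The guiding principle, extracted from the analysis behind Theorem \ref{main result}, is that a tangent cone at infinity of $(X,g)$ is, up to collapse of the Hopf-type $S^1$-fiber, determined by the pointed Gromov-Hausdorff limit of the rescaled base space $(\R^3,V\cdot h_0,0)$, and that this base metric is in turn governed by how the rescaled monopole empirical measures concentrate on the $\zeta_1$-axis after the change of parameter $y=x^\alpha$. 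Concretely, if the rescaled, pushed-forward monopole measures converge weakly to $\mathbf{1}_I(x)\,dx$, then the associated base metric converges in the Gromov-Hausdorff sense to $(\R^3,d_I,0)$.

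For the construction of $X$, fix an enumeration $\{I_m\}_{m\in\N}$ of a countable dense family in $\mathcal{B}_+(\R^+)$ (for instance, finite unions of intervals with rational endpoints), and fix a rapidly increasing sequence of scales $\lambda_m\to\infty$. At the scale $\lambda_m$ on the positive $\zeta_1$-axis, place a finite block of monopoles $p_{m,k}=(\lambda_m\xi_{m,k}^\alpha,0,0)$, $k=1,\ldots,N_m$, where the $\xi_{m,k}$ are chosen so that their empirical measure approximates the uniform distribution on $I_m$ to resolution $1/m$. The cardinalities $N_m$ and the ratios $\lambda_{m+1}/\lambda_m$ are calibrated so that (i) the renormalized series for $V$ converges and yields a smooth complete Ricci-flat metric $g$ on the total space $X$ of the associated circle bundle, as in \cite{AKL1989}, \cite{goto1994}, \cite{hattori2011}, and (ii) under the rescaling $a_m=\lambda_m^{-(\alpha+1)/\alpha}$, the $m$-th block becomes dominant while the other blocks contribute either (a) only a constant absorbed into the renormalization, or (b) a degenerate tail whose effect vanishes in the limit. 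This should give $(X,a_m g,q)\to(\R^3,d_{I_m},0)$ in pointed Gromov-Hausdorff topology.

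For an arbitrary $I\in\mathcal{B}_+(\R^+)$, choose a subsequence $I_{m_k}$ with $|I\,\triangle\,I_{m_k}|\to 0$. The harmonic function $V_I(\zeta)=\int_I|\zeta-(x^\alpha,0,0)|^{-1}\,dx$ depends continuously on $I$ in the $L^1$-sense away from the singular curve $\mathbf{l}=\{(x^\alpha,0,0):x\in\R^+\}$, so $V_{I_{m_k}}\to V_I$ uniformly on compact subsets of $\R^3\setminus\mathbf{l}$. This yields $(\R^3,d_{I_{m_k}},0)\to(\R^3,d_I,0)$ in the pointed Gromov-Hausdorff topology; since $\mathcal{T}(X,g)$ is closed under such limits, $(\R^3,d_I,0)\in\mathcal{T}(X,g)$ as required.

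The main obstacle is the scale-by-scale separation in the second step: we must verify that monopoles from blocks other than the $m$-th contribute, after the renormalization built into the shift $-|p_n|^{-1}$, only a uniformly small perturbation of $V$ in the rescaled picture around scale $\lambda_m$. This is the infinite-dimensional analogue of the three-regime balancing that underlies Theorem \ref{main result}, and it requires delicate estimates on partial sums of the renormalized Green kernels, together with uniform control of the collapse of the $S^1$-fiber across all scales. These estimates should follow by the same methods used to prove Theorem \ref{main result} and those in \cite{hattori2011}, but their simultaneous application to infinitely many scales is the technically most demanding part of the argument.
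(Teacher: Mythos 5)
Your overall strategy --- realize a countable dense family of sets by placing the corresponding monopole blocks at widely separated scales along the curve $(x^\alpha,0,0)$, then pass to arbitrary $I\in\mathcal{B}_+(\R^+)$ using continuity of $I\mapsto d_I$ in symmetric difference and the closedness of $\mathcal{T}(X,g)$ under pointed Gromov--Hausdorff limits --- is exactly the paper's strategy, and the deferred quantitative estimates are precisely what Theorem \ref{key} and Propositions \ref{conv1}, \ref{lower1} supply. But there is a genuine gap in the first step. You place the block realizing $I_m$ at a \emph{single} scale $\lambda_m$ and write ``$(X,a_m g,q)\to(\R^3,d_{I_m},0)$''; since both sides depend on $m$, this is not a convergence statement for any fixed $m$, and a tangent cone at infinity must by definition be the limit along a sequence of scales tending to $0$. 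With each $I_m$ occurring only once, no fixed $(\R^3,d_{I_m},0)$ is exhibited as a tangent cone, and then the second paragraph has nothing to feed into the closedness argument. The paper resolves this by fixing a bijection $F:\N\to\N\times\N$, $F(q)=(i(q),m(q))$, so that every $I_m$ recurs at the infinitely many scales $L_q$ with $m(q)=m$; for fixed $m$ the choices $a_i^{1/(1+\alpha)}=L_q^{-1}S_{m,1}$ then give an honest sequence $a_i\to 0$ with limit $(\R^3,d_{I_m},0)$. Your construction could alternatively be salvaged by a diagonal argument (approximate a fixed $I_m$ by infinitely many other members $I_{m'}$ of the family and use the scales $a_{m'}$), but that requires a \emph{uniform} quantitative version of the single-scale closeness $(X,a_{m}g,q)\approx(\R^3,d_{I_{m}},0)$, which you would have to state and prove; as written the argument does not close.

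Two secondary points. First, the renormalization $V=c+\sum_n(|\zeta-p_n|^{-1}-|p_n|^{-1})$ is unnecessary and costly: for monopoles on $(x^\alpha,0,0)$ with $\alpha>1$ and blocks of the paper's density, the unrenormalized sum already satisfies $\sum_\lambda(1+|\lambda|)^{-1}<\infty$, which is the hypothesis under which the Anderson--Kronheimer--LeBrun construction is quoted; with your renormalized $V$ you must separately verify positivity of $V$ and re-derive the lower bound of type {\bf (A5)}, neither of which you address. Second, your one-step approximation of a general $I\in\mathcal{B}_+(\R^+)$ by rational finite unions in symmetric difference needs care when $m(I)=\infty$; the paper proceeds in two steps (Vitali covering for open sets, then outer regularity for Borel sets), and in either route one must also quote the uniform tail bound \eqref{est4} to control the contribution of $I\cap[X_0,\infty)$ so that the constants in {\bf (A3--6)} can be chosen independently of the approximating set.
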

Since $d_S^\infty$ and $d_0^T$ are contained in $\mathcal{T}(X,g)$ 
in the above theorem, then their limits $h_0$ and $\frac{1}{|\zeta|}h_0$ 
are also contained in $\mathcal{T}(X,g)$. 
The author does not know whether any other metric spaces 
are contained in $\mathcal{T}(X,g)$.

Theorems \ref{main result} and \ref{main result2} are 
shown along the following process.
The above-mentioned \hK manifolds are constructed from 
infinitely countable subsets $\Lambda$ in $\R^3$ such that 
$\sum_{\lambda\in\Lambda}\frac{1}{1+|\lambda|} <\infty$.
We denote it by $(X,g_\Lambda)$ and fix the base point $p\in X$. 
From the construction, $(X,g_\Lambda)$ has a natural $S^1$-action 
preserving $g_\Lambda$ and the \hK structure, 
then we obtain a \hK moment map $\mu_\Lambda:X\to \R^3$ 
such that $\mu_\Lambda(p)=0$, 
which is a surjective map whose generic fibers are $S^1$. 
There is a unique distance function $d_\Lambda$ on $\R^3$ such that 
$\mu_\Lambda$ is a submetry. 
Here, submetries are the generalization of Riemannian submersions 
to the category of metric spaces. 
For $a>0$ we can see 
$a g_\Lambda = g_{a \Lambda}$, hence 
by taking $a_i>0$ such that $\lim_{i\to \infty} a_i = 0$, 
we obtain a sequence of submetries $\mu_{a_i \Lambda}:X\to \R^3$. 
Now, assume that 
$\{ (\R^3,d_{a_i \Lambda},0)\}_i$ converges to a metric space 
$(\R^3,d_\infty,0)$ for some $d_\infty$ 
in the pointed Gromov-Hausdorff topology, 
and the diameters of fibers of $\mu_{a_i \Lambda}$ converges to $0$ 
in some sense.
Then we can show $(\R^3,d_\infty,0)$ is the Gromov-Hausdorff limit of 
$\{ (X,g_{a_i \Lambda},p)\}_i$. 
We raise a concrete example of $\Lambda$ and sequences $\{ a_i\}_i$, 
then obtain several limit spaces. 
Among them, it is shown in Section \ref{geometry} that 
$(\R^3,d_0^\infty)$ is not a polar space in the sense of Cheeger and Colding 
\cite{Cheeger-Colding1997}.

This paper is organized as follows. 
We review the construction of \hK manifolds of type $\A$ 
and \hK moment map $\mu_\Lambda$ in Section \ref{secAKL}.
Then we review the notion of submetry in 
Section \ref{secsubmetry}, and 
the notion of Gromov-Hausdorff topology in Section \ref{secGH}. 
In Section \ref{sec tan}, we construct a submetry 
$\mu_a$ from $(X,g_{a \Lambda})$ to 
$(\R^3,d_a)$ by using $\mu_\Lambda$ and dilation, 
where $d_a$ is the metric induced by the Riemannian metric 
$\Phi_a(\zeta)h_0$. Here, $\Phi_a$ is a positive valued 
harmonic function determined by $\Lambda$ and some constants. 
Then we see that the convergence of $\{ (X,g_{a_i \Lambda})\}_i$ 
can be reduced to the convergence of 
$\{ (\R^3,d_{a_i})\}_i$. 
In Sections \ref{sec const} and \ref{sec dis}, 
we raise concrete examples of $\Lambda$ 
and fix $a>0$, 
then estimate the difference of $\Phi_a$ and 
another positive valued harmonic function $\Phi_\infty$, 
which induces the metric $d_\infty$ on $\R^3$. 
In Section \ref{sec conv}, we observe some examples 
by applying the results in Sections 
\ref{sec const} and \ref{sec dis}, then show 
Theorems \ref{main result} and \ref{main result2}. 
In Section \ref{geometry}, 
we prove that $(\R^3,d_0^\infty)$ is not a polar space. 

\paragraph{Acknowledgment.}
The author would like to thank Professor Shouhei Honda 
who invited the author to this attractive topic, and 
also thank him for giving the several advice on this paper. 
The author also would like to thank the referee for careful reading 
and several useful comments. 
Thanks to his pointing out, 
the author could make the main results much stronger. 
The author was supported by 
Grant-in-Aid for Young Scientists (B) Grant Number 
16K17598.

\section{\HK manifolds of type $\A$}\label{secAKL}
Here we review shortly the construction of \hK manifolds of 
type $\A$, along \cite{AKL1989}.

Let $\Lambda\subset \R^3$ be a countably infinite subset 
satisfying the convergence condition
\begin{align*}
\sum_{\lambda\in\Lambda} \frac{1}{1+|\lambda |} < \infty,
\end{align*}
and take a positive valued harmonic function $\Phi_\Lambda$ over 
$\R^3\backslash \Lambda$ defined by 
\begin{align*}
\Phi_\Lambda(\zeta) := \sum_{\lambda\in\Lambda} \frac{1}{|\zeta - \lambda |}.
\end{align*}
Then $*d\Phi_\Lambda \in \Omega^2(\R^3\backslash \Lambda)$ is a closed $2$-form 
where $*$ is the Hodge's star operator of the Euclidean metric, 
and we have an integrable cohomology class $[\frac{1}{4\pi}*d\Phi_\Lambda]\in H^2(\R^3\backslash \Lambda,\Z)$, 
which is equal to the $1$st Chern class of a principal $S^1$-bundle $\mu=\mu_\Lambda: X^*\to \R^3\backslash \Lambda$.
For every $\lambda\in \Lambda$, we can take a sufficiently small open ball $B\subset \R^3$ centered at $\lambda$ 
which does not contain any other elements in $\Lambda$.
Then $\mu : \mu^{-1}(B\backslash \{ \lambda \}) \to B\backslash \{ \lambda \}$ is isomorphic to 
Hopf fibration $\mu_0: \R^4\backslash \{ 0\} \to \R^3\backslash \{ 0\}$ as principal $S^1$-bundles, 
hence there exists a $C^\infty$ $4$-manifold $X$ and an open embedding $X^*\subset X$,  
and $\mu$ can be extended to an $S^1$-fibration 
\begin{align*}
\mu = (\mu_1,\mu_2,\mu_3): X\to \R^3.
\end{align*}
Moreover we may write $X\backslash X^* = \{ p_\lambda;\ \lambda\in\Lambda\}$ 
and $\mu (p_\lambda) = \lambda$.
Next we take an $S^1$-connection $\Gamma\in \Omega^1(X^*)$ on 
$X^*\to \R^3\backslash \Lambda$, whose curvature form is given by $*d\Phi_\Lambda$.
Then $\Gamma$ is uniquely determined up to exact $1$-form on $\R^3\backslash \Lambda$.
Now, we obtain a Riemannian metric 
\begin{align*}
g_\Lambda := (\mu^*\Phi_\Lambda)^{-1} \Gamma^2 + \mu^*\Phi_\Lambda\sum_{i=1}^3 (d\mu_i)^2
\end{align*}
on $X^*$, which can be extended to a smooth Riemannian metric $g_\Lambda$ over $X$ by 
taking $\Gamma$ appropriately.
\begin{thm}[\cite{AKL1989}]
Let $(X,g_\Lambda)$ be as above. 
Then it is a complete \hK (hence Ricci-flat) metric of dimension $4$.
\label{AKLmetric}
\end{thm}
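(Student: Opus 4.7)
The plan is to verify, in order: (i) $g_\Lambda$ defines a hyper-K\"ahler structure on $X^*$; (ii) this structure extends smoothly across each added point $p_\lambda$; (iii) the resulting Riemannian manifold is complete. For (i), following the Gibbons-Hawking recipe on $X^*$, I would introduce the three $2$-forms
\begin{align*}
\omega_i := \Gamma\wedge d\mu_i + \mu^*\Phi_\Lambda \cdot d\mu_j\wedge d\mu_k,
\qquad (i,j,k)\text{ cyclic in }(1,2,3).
\end{align*}
The identities $d\Gamma = \mu^*(*d\Phi_\Lambda)$ (from the choice of connection) and $\Delta\Phi_\Lambda = 0$ on $\R^3\setminus\Lambda$ together give $d\omega_i = 0$. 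In the orthonormal frame whose vertical direction is $(\mu^*\Phi_\Lambda)^{-1/2}\xi$, with $\xi$ the infinitesimal generator of the $S^1$-action, and whose horizontal directions are $(\mu^*\Phi_\Lambda)^{1/2}$ times the $\Gamma$-horizontal lifts of $\partial/\partial\zeta_i$, one checks directly that $\omega_i(\,\cdot\,, I_i\,\cdot\,) = g_\Lambda$ for three compatible almost complex structures $I_i$ satisfying the quaternionic relations. Closedness of the $\omega_i$ combined with this quaternionic compatibility forces integrability of each $I_i$ by a standard computation, and $(g_\Lambda, I_1, I_2, I_3)$ is a hyper-K\"ahler, hence Ricci-flat, structure on $X^*$.

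Step (ii) is the main obstacle. Near a fixed $\lambda\in\Lambda$, decompose $\Phi_\Lambda = \frac{1}{|\zeta - \lambda|} + \Phi'$, where $\Phi'$ is smooth and harmonic on a neighborhood of $\lambda$. The flat hyper-K\"ahler $\R^4$ is precisely of Gibbons-Hawking form with potential $\frac{1}{|\zeta|}$ via the Hopf fibration $\mu_0:\R^4\setminus\{0\}\to\R^3\setminus\{0\}$, and a direct calculation verifies that for $\Phi'\equiv 0$ and $\Gamma$ equal to the standard Hopf connection, the prescription for $g_\Lambda$ recovers the Euclidean metric on $\R^4$. I would therefore choose $\Gamma$ on a neighborhood of $p_\lambda$ of the form $\Gamma = \Gamma_\lambda^{\mathrm{Hopf}} + \Gamma'$, where $\Gamma_\lambda^{\mathrm{Hopf}}$ is pulled back from the standard Hopf connection (supplying the $\frac{1}{|\zeta-\lambda|}$ curvature contribution) and $\Gamma'$ is a smooth local $1$-form with $d\Gamma' = \mu^*(*d\Phi')$; such $\Gamma'$ exists since $*d\Phi'$ is smooth and closed near $\lambda$. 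In this gauge, $g_\Lambda$ near $p_\lambda$ equals the flat metric plus a smooth symmetric tensor, so it extends smoothly and non-degenerately to $p_\lambda$, and the $\omega_i$ extend by the same formulas applied to the smoothed data. Global existence of such a $\Gamma$ on $X^*$ with these local shapes is ensured by the integrality of $[\tfrac{1}{4\pi}*d\Phi_\Lambda]$ together with a partition-of-unity patching.

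For (iii), let $\{x_n\}\subset X$ be Cauchy in $g_\Lambda$. If $\mu(x_n)$ remains in a bounded subset of $\R^3$, then $\{x_n\}$ remains in a compact subset of $X$, because the $p_\lambda$ have been filled in and the generic fibres of $\mu$ are $S^1$'s of bounded diameter; hence $\{x_n\}$ subconverges. Otherwise $|\mu(x_n)|\to\infty$, and fixing any single $\lambda_0\in\Lambda$ we have $\Phi_\Lambda(\zeta)\ge\frac{1}{|\zeta-\lambda_0|}$, so for any smooth path $\gamma:[0,T_n]\to X$ from a base point $x_0$ to $x_n$,
\begin{align*}
L_{g_\Lambda}(\gamma)\ \ge\ \int_0^{T_n}\!\sqrt{\mu^*\Phi_\Lambda}\,\bigl|(\mu\circ\gamma)'\bigr|\,dt \ \ge\ 2\sqrt{|\mu(x_n)-\lambda_0|} - 2\sqrt{|\mu(x_0)-\lambda_0|},
\end{align*}
which tends to infinity, contradicting the Cauchy property. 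Hence $(X,g_\Lambda)$ is complete, finishing the proof.
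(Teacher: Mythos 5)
The paper does not prove this statement: it is quoted from \cite{AKL1989} after the construction is reviewed, so there is no internal argument to compare against. Your proposal reconstructs the standard Gibbons--Hawking/AKL proof and is correct in outline. All three steps are the right ones, and the only point that genuinely needs care --- smoothness and non-degeneracy of the extension at $p_\lambda$ --- is handled correctly by splitting $\Phi_\Lambda=\tfrac{1}{|\zeta-\lambda|}+\Phi'$ and working in a gauge where $\Gamma$ is the Hopf connection plus a smooth remainder: since the Hopf map is quadratic, $r=|\zeta-\lambda|$ pulls back to $|q|^2$, and each correction term ($(V^{-1}-V_0^{-1})\Gamma_0^2$, $V^{-1}\Gamma_0\Gamma'$, $V^{-1}(\Gamma')^2$, $\Phi'\mu^*h_0$) is smooth and vanishes at $q=0$, so the metric there is exactly the flat one. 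Two small things you leave implicit and should state: first, that the hypothesis $\sum_{\lambda\in\Lambda}\frac{1}{1+|\lambda|}<\infty$ is what makes $\Phi_\Lambda$ a well-defined smooth harmonic function (the series converges locally uniformly away from $\Lambda$, hence in $C^\infty_{\rm loc}$ by harmonicity) and forces $\Lambda$ to be closed and discrete, so that $\Phi'=\sum_{\lambda'\neq\lambda}\frac{1}{|\zeta-\lambda'|}$ really is smooth near $\lambda$ --- this is the only feature distinguishing the type $\A$ case from the classical finite (ALE) one; second, in the completeness argument the correct dichotomy is that a Cauchy sequence is bounded in $d_{g_\Lambda}$, so your length lower bound $2\sqrt{|\mu(x_n)-\lambda_0|}-C$ shows $\mu(x_n)$ stays in a compact set, and then properness of $\mu$ (clear from the Hopf local model) gives subconvergence. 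With those remarks added, the argument is complete and is essentially the proof of \cite{AKL1989}.
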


Since $S^1$ acts on $(X,g_\Lambda)$ isometrically, it is easy to check that
\begin{align*}
\mu :(X^*,g_\Lambda) \to (\R^3\backslash \Lambda,\Phi_\Lambda\cdot h_0)
\end{align*}
is a Riemannian submersion, 
where $h_0$ is the Euclidean metric on $\R^3$.

Next we consider the rescaling of $(X,g_\Lambda)$.
For $a>0$, put $a\Lambda := \{ a\lambda \in \R^3;\lambda\in\Lambda\}$. 
Then it is easy to see
\begin{align*}
\Phi_{a\Lambda}(\zeta) = \sum_{\lambda\in\Lambda}\frac{1}{|\zeta - a\lambda|}
= a^{-1}\sum_{\lambda\in\Lambda}\frac{1}{|a^{-1}\zeta - \lambda|}
= a^{-1}\Phi_{\Lambda}(a^{-1}\zeta)
\end{align*}
and $\mu_{a\Lambda} = a\mu_\Lambda$, hence 
$\mu_{a\Lambda}^*\Phi_{a\Lambda} = a^{-1}\mu_\Lambda^*\Phi_\Lambda$ holds.
Thus we have
\begin{align*}
g_{a\Lambda} &= (\mu_{a\Lambda}^*\Phi_{a\Lambda})^{-1} \Gamma^2 + \mu_{a\Lambda}^*\Phi_{a\Lambda}\sum_{i=1}^3 
(d\mu_{a\Lambda,i})^2\\
&= a(\mu_{\Lambda}^*\Phi_{\Lambda})^{-1} \Gamma^2 + a \mu_{\Lambda}^*\Phi_{\Lambda}\sum_{i=1}^3 
(d\mu_{\Lambda,i})^2 = a g_\Lambda.
\end{align*}

\section{Submetry}\label{secsubmetry}
Throughout of this paper, the distance between $x$ and $y$ 
in a metric space $(X,d)$ is denoted by $d(x,y)$. 
If it is clear which metric is used,
we often write $|xy| = d(x,y)$

The map $\mu: X\to \R^3$ appeared in the previous section 
is not a Riemannian submersion, 
since $d\mu$ degenerates on $X\backslash X^*$ and 
$\Phi_\Lambda\cdot h_0$ does not defined on the whole of $\R^3$. 
However we can regard $\mu$ as a submetry, 
which is a notion introduced in \cite{berestovskii1987submetry}.

\begin{definition}[\cite{berestovskii1987submetry}]\normalfont
Let $X,Y$ be metric spaces, and $\mu : X\to Y$ be a map, 
which is not necessarily to be continuous.
Then $\mu$ is said to be a {\it submetry} if 
$\mu( D(p,r)) = D(\mu(p),r)$ holds for every 
$p\in X$ and $r>0$, 
where $D(p,r)$ is the closed ball of radius $r$ centered at $p$.
\end{definition}

Any proper Riemannian submersions between smooth Riemannian manifolds 
are known to be submetries.
Conversely, a submetry between smooth complete Riemannian manifolds 
becomes a $C^{1,1}$ Riemannian submersion \cite{berestovskii2000submetry}.

Now we go back to the setting in Section \ref{secAKL}.
Denote by $d_{\Lambda}$ the metric on $\R^3$ 
defined as the completion of the Riemannian distance 
induced from $\Phi_\Lambda\cdot h_0$. 
Since 
$\mu :(X^*,g_\Lambda) \to (\R^3\backslash \Lambda,\Phi_\Lambda\cdot h_0)$
is a Riemannian submersion, 
we have the following proposition.

\begin{prop}
Let $(X,g_\Lambda)$ be a \hK manifolds of type $\A$. 
The map $\mu: (X,d_{g_\Lambda}) \to (\R^3,d_\Lambda)$ is a submetry, 
where $d_{g_\Lambda}$ is the Riemannian distance induced from $g_\Lambda$. 
Moreover, we have 
\begin{align*}
d_\Lambda (q_0,q_1) = \inf_{p_1\in \mu^{-1}(q_1)} d_{g_\Lambda} (p_0,p_1) 
\end{align*}
for any $p_0\in \mu^{-1}(q_0)$
\end{prop}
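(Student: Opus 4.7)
The plan is to first establish the infimum formula on the right-hand side, and then derive the submetry property as a direct corollary using compactness of fibers. The backbone is the fact, already recorded in the excerpt, that $\mu$ restricted to $X^*$ is a Riemannian submersion onto $(\R^3\backslash \Lambda, \Phi_\Lambda\cdot h_0)$; everything reduces to a careful treatment of the singular locus $\Lambda\subset \R^3$ and its preimage $\{p_\lambda\}_{\lambda\in\Lambda}\subset X$.

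First I would prove $d_\Lambda(q_0,q_1)=\inf_{p_1\in\mu^{-1}(q_1)}d_{g_\Lambda}(p_0,p_1)$ under the simplifying assumption $q_0,q_1\in\R^3\backslash \Lambda$. For $(\geq)$: given $\varepsilon>0$, choose a piecewise smooth curve $\gamma$ in $\R^3\backslash \Lambda$ from $q_0$ to $q_1$ of $(\Phi_\Lambda h_0)$-length at most $d_\Lambda(q_0,q_1)+\varepsilon$. Using the connection $\Gamma$ on the principal $S^1$-bundle $X^*\to \R^3\backslash \Lambda$ one horizontally lifts $\gamma$; compactness of the $S^1$-fibers together with completeness of $g_\Lambda$ ensures the lift $\tilde\gamma$ is defined on all of $[0,1]$ and ends at some $p_1\in\mu^{-1}(q_1)$, and the Riemannian submersion property gives $L_{g_\Lambda}(\tilde\gamma)=L_{\Phi_\Lambda h_0}(\gamma)$. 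For $(\leq)$: any piecewise smooth curve $c$ from $p_0$ to $p_1\in\mu^{-1}(q_1)$ in $X^*$ satisfies $L_{\Phi_\Lambda h_0}(\mu\circ c)\leq L_{g_\Lambda}(c)$, because $d\mu$ restricted to the horizontal distribution is a linear isometry. I would then extend the formula to the case $q_0\in\Lambda$ or $q_1\in\Lambda$ by approximation, using that $d_\Lambda$ is by definition a completion and that both $\mu$ and $d_{g_\Lambda}$ are continuous on $X$.

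With the formula in hand, the submetry condition $\mu(D(p,r))=D(\mu(p),r)$ follows quickly. The inclusion $\mu(D(p,r))\subseteq D(\mu(p),r)$ is the length-decreasing inequality $d_\Lambda(\mu(p),\mu(p'))\leq d_{g_\Lambda}(p,p')$ applied to $p'\in D(p,r)$, which is itself an immediate consequence of the $(\leq)$ argument above. For the reverse inclusion, given $q\in D(\mu(p),r)$ the infimum formula produces a sequence $p_n\in\mu^{-1}(q)$ with $d_{g_\Lambda}(p,p_n)\to d_\Lambda(\mu(p),q)\leq r$; compactness of the fiber $\mu^{-1}(q)$, which is either an $S^1$ or the single point $p_\lambda$, then yields $p'\in\mu^{-1}(q)$ with $d_{g_\Lambda}(p,p')\leq r$.

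The main technical obstacle is handling the loci where $\mu$ fails to be a submersion, namely the fibers over $\Lambda$. Two subtleties must be addressed: (i) horizontal lifts of curves in $\R^3\backslash \Lambda$ whose target curves terminate on $\Lambda$ must be shown to converge inside $X$; (ii) curves in $X$ from $p_0$ to $p_1$ might pass through some $p_\lambda$, where $d\mu$ vanishes and a direct pointwise length comparison needs justification. Both are resolved by the local model near each $p_\lambda$, where $(X,g_\Lambda)$ is modeled on $\R^4$ with $\mu$ the standard Hopf map and $g_\Lambda$ asymptotic to the Euclidean metric: the pointwise inequality $|d\mu(v)|_{\Phi_\Lambda h_0}\leq |v|_{g_\Lambda}$ extends continuously across $p_\lambda$ through the cancellation of the blow-up of $\Phi_\Lambda$ against the vanishing of $d\mu$, and curves in $X$ can be perturbed off the discrete set $\{p_\lambda\}$ with arbitrarily small change in length.
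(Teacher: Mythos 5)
Your argument is correct and follows exactly the route the paper takes: the paper states the proposition as an immediate consequence of $\mu|_{X^*}$ being a Riemannian submersion onto $(\R^3\backslash\Lambda,\Phi_\Lambda\cdot h_0)$ and omits the details, which are precisely the horizontal-lift and length-nonincreasing-projection arguments you supply, together with the treatment of the singular fibers $\{p_\lambda\}$ by perturbation and completion. Nothing in your write-up diverges from or adds a gap to the paper's (implicit) proof.
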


\section{The Gromov-Hausdorff convergence}\label{secGH}
In this section, we discuss with the pointed Gromov-Hausdorff convergence 
of a sequence of pointed metric spaces equipped with submetries.
First of all, we review the definition of the pointed Gromov-Hausdorff convergence 
of pointed metric spaces.
Denote by $B(p,r) = B_X(p,r)$ the open ball of radius $r$ centered at $p$ in a metric space $X$.
\begin{definition}\normalfont
Let $(X,p)$ and $(X',p')$ be pointed metric spaces, 
and $r,\varepsilon$ be positive real numbers.
$f:B(p,r) \to X'$ is said to be an {\it $(r,\varepsilon)$-isometry} from $(X,p)$ to $(X',p')$
if $(1)$ $f(p) = p'$, 
$(2)$ $||xy| - |f(x)f(y)|| < \varepsilon$ holds for any $x,y\in B(p,r)$, 
$(3)$ $B(f(B(p,r)),\varepsilon)$ contains $B(p',r-\varepsilon)$.
\end{definition}
\begin{definition}\normalfont
Let $\{(X_i,p_i)\}_i$ be a sequence of pointed metric spaces.
Then $\{(X_i,p_i)\}_i$ is said to 
{\it converge to a metric space $(X,p)$ in the pointed 
Gromov-Hausdorff topology}, 
or $\{(X_i,p_i)\}_i\xrightarrow{\text{GH}}(X,p)$, 
if for any $r,\varepsilon>0$ there exists 
an positive integer $N_{(r,\varepsilon)}$ 
such that $(r,\varepsilon)$-isometry from $(X_i,p_i)$ to $(X,p)$ exists for 
every $l\ge N_{(r,\varepsilon)}$.
\end{definition}

For metric spaces $X,Y$, $q\in Y$ and a map $\mu:X\to Y$, 
define $\delta_{q,\mu}(r) \in \R_{\ge 0}\cup \{ \infty \}$ by
\begin{align*}
\delta_{q,\mu}(r) &:= \sup_{y\in B(q,r)} {\rm diam}(\mu^{-1}(y))\\
&= \sup_{\underset{x,x'\in \mu^{-1}(y)}{y\in B(q,r)}} |xx'|.
\end{align*}

\begin{prop}
Let $\{ (X,p)\}$ and $\{ (Y,q)\}$ be pointed metric spaces equipped with 
submetries $\mu:X\to Y$ satisfying $\mu(p)=q$, 
and $(Y_\infty,q_\infty)$ be another pointed metric space.
Assume that $\delta_{q,\mu}(r)<\infty$ and 
we have an $(r,\delta)$-isometry from $(Y,q)$ to $(Y_\infty,q_\infty)$. 
Then there exists an $(r,\delta + \delta_{q,\mu})$-isometry from 
$\{ (X,p)\}$ to $(Y_\infty,q_\infty)$.
\label{conv of submetry}
\end{prop}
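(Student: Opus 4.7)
The natural candidate for an $(r, \delta + \delta_{q,\mu}(r))$-isometry from $(X,p)$ to $(Y_\infty,q_\infty)$ is simply the composition $F := f\circ\mu$ restricted to $B(p,r)$, where $f:B(q,r)\to Y_\infty$ denotes the given $(r,\delta)$-isometry from $(Y,q)$ to $(Y_\infty,q_\infty)$. The plan is to verify the three defining properties of an $(r,\delta+\delta_{q,\mu}(r))$-isometry for this $F$. A preliminary observation is that any submetry is $1$-Lipschitz, since applying the definition $\mu(D(x,\rho))=D(\mu(x),\rho)$ with $\rho=|xx'|$ yields $|\mu(x)\mu(x')|\le|xx'|$. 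In particular $\mu(B(p,r))\subseteq B(q,r)$, so $F$ is well defined, and $F(p)=f(\mu(p))=f(q)=q_\infty$.

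For the almost-isometry inequality, for $x,x'\in B(p,r)$ I would split
\[
\bigl||xx'|-|F(x)F(x')|\bigr|\le \bigl||xx'|-|\mu(x)\mu(x')|\bigr|+\bigl||\mu(x)\mu(x')|-|f(\mu(x))f(\mu(x'))|\bigr|.
\]
The second term is less than $\delta$ because $\mu(x),\mu(x')\in B(q,r)$ and $f$ is an $(r,\delta)$-isometry. For the first term, the $1$-Lipschitz property gives $|\mu(x)\mu(x')|\le|xx'|$, and the converse direction of the submetry axiom, applied at $x$ with radius $|\mu(x)\mu(x')|$, produces $\tilde x'\in\mu^{-1}(\mu(x'))$ with $|x\tilde x'|\le|\mu(x)\mu(x')|$, whence $|xx'|\le|\mu(x)\mu(x')|+\mathrm{diam}(\mu^{-1}(\mu(x')))\le|\mu(x)\mu(x')|+\delta_{q,\mu}(r)$, since $\mu(x')\in B(q,r)$. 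Combining, the total discrepancy is strictly less than $\delta+\delta_{q,\mu}(r)$. For the density condition, for $y_\infty\in B(q_\infty,r-(\delta+\delta_{q,\mu}(r)))$ I would apply the $(r,\delta)$-isometry property of $f$ to obtain $y\in B(q,r)$ with $|f(y)\,y_\infty|<\delta$; then the submetry axiom lifts $y$ to some $x\in D(p,|qy|)\subseteq B(p,r)$ with $\mu(x)=y$, so that $F(x)=f(y)$ and hence $|F(x)\,y_\infty|<\delta<\delta+\delta_{q,\mu}(r)$.

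The main subtlety, and the one point I would want to state carefully, is that the submetry property must be invoked in two opposite directions: once to bound $d_Y$ from above by $d_X$ (i.e.\ the $1$-Lipschitz property) and once to lift a $Y$-point back to $X$ without increasing the distance from $p$, at the cost of a fiber-diameter error. That the fiber-diameter error is uniformly controlled by $\delta_{q,\mu}(r)$ relies on the preliminary fact $\mu(B(p,r))\subseteq B(q,r)$, so that every fiber $\mu^{-1}(\mu(x'))$ that appears is indeed a fiber over $B(q,r)$. No other obstacle seems to arise; the argument is essentially a compatibility check between the submetry and the approximate isometry.
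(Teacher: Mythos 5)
Your proposal is correct and follows exactly the route the paper takes: the paper's proof consists of the single remark that the composition $\hat f := f\circ\mu$ is the desired $(r,\delta+\delta_{q,\mu})$-isometry, and your argument supplies precisely the verification it leaves to the reader (the $1$-Lipschitz property of submetries, the fiber-diameter correction via lifting through the submetry axiom, and the lift of points of $B(q,r)$ into $B(p,r)$ for the density condition). All three checks are carried out correctly, so nothing further is needed.
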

\begin{proof}
Now, there is an $(r,\delta)$-isometry $f$ 
from $(Y,q)$ to $(Y_\infty,q_\infty)$.
Then it is easy to check that the composition 
$\hat{f}:= f\circ\mu$ is an 
$(r,\delta + \delta_{q,\mu})$-isometry from $(X,p)$ to $(Y_\infty,q_\infty)$.
\end{proof}

\section{Tangent cones at infinity}\label{sec tan}
Let $(X,d)$ be a metric space and $\{ a_i\}_i$ be a decreasing sequence of positive numbers 
converging to $0$.
If $(Y,q)$ is the pointed Gromov-Hausdorff limit of $\{ (X, a_id, p)\}_i$, then it is called 
an tangent cone at infinity of $X$.
It is clear that the limit does not depend on $p\in X$, 
but may depend on the choice of a sequence $\{ a_i\}_i$.

In this paper we are considering the tangent cones at infinity 
of $(X,d_{g_\Lambda})$.
In Section \ref{secAKL} we have seen $\sqrt{a}d_{g_\Lambda} = d_{g_{a\Lambda}}$ 
for $a>0$,
hence $\mu_{a\Lambda}: (X,\sqrt{a}d_{g_\Lambda}) \to (\R^3,d_{a\Lambda})$ 
is a submetry. 
By taking $N \in \R^+$ and 
the dilation 
$I_{N}:\R^3 \to\R^3$ defined by $I_{N}(\zeta) := \frac{1}{N}\zeta$,  
we have another submetry 
\begin{align*}
\mu_a:=I_{N}^{-1} \circ \mu_{a\Lambda}: (X,\sqrt{a}d_{g_\Lambda}) \to (\R^3, d_a:={I_{N}}^*d_{a\Lambda}).
\end{align*}
Here, ${I_{N}}^*d_{a\Lambda}$ is the completion of the Riemannian distance of 
\begin{align*}
{I_{N}}^*(\Phi_{a\Lambda}\cdot h_0) = {I_{N}}^*\Phi_{a\Lambda}\cdot \frac{1}{N^2} h_0
=  N\Phi_{N a\Lambda}\cdot \frac{1}{N^2} h_0 
=  \frac{1}{N}\Phi_{N a\Lambda}\cdot h_0,
\end{align*}
therefore we obtain $d_a$ which is the completion of 
the Riemannian metric $\Phi_a\cdot h_0$, 
where 
\begin{align*}
\Phi_a :=  \frac{1}{N}\Phi_{N a\Lambda}.
\end{align*}
In other words, $d_a$ is given by 
\begin{align}
d_a(x,y) = \inf_{\gamma \in {\rm Path} (x,y)} l_a(\gamma),\label{distance}
\end{align}
where ${\rm Path} (x,y)$ is the set of smooth paths in $\R^3$ joining 
$x,y\in \R^3$, 
and 
\begin{align}
l_a(\gamma) = \int_{t_0}^{t_1}\sqrt{\Phi_a(\gamma(t))} |\gamma'(t)| dt.\label{def length}
\end{align}

By the definition of $g_\Lambda$, one can see that 
the diameter of the fiber $\mu_{\Lambda}^{-1}(\zeta)$ is given by 
$\frac{\pi}{\sqrt{\Phi_{\Lambda}(\zeta)}}$. 
Accordingly, the diameter of $\mu_a^{-1}(\zeta)$ is given by
$\frac{\pi}{N\sqrt{\Phi_a(\zeta)}}$. 

For a metric $d_\infty$ on $\R^3$ and 
constants $r,\delta,\delta'>0$, 
we introduce the next assumptions.

\paragraph{(A1)}
The identity map 
\begin{align*}
{\rm id}_{\R^3} : (\R^3,d_a,0) \to (\R^3,d_\infty,0)
\end{align*}
is an $(r,\delta)$-isometry.

\paragraph{(A2)}
$\sup_{\zeta\in B_{d_a}(0,r)}\frac{\pi}{N\sqrt{\Phi_a(\zeta)}} < \delta'$
holds.

\paragraph{}
Then we obtain the next proposition by Proposition \ref{conv of submetry}.

\begin{prop}
Let $(X,g_\Lambda)$ and $\mu_a$ be as above, 
$p\in X$ satisfy $\mu_\Lambda(p) = 0$ 
and $d_\infty$ be a metric on $\R^3$. 
If {\bf (A1-2)} are satisfied for given constants $r,\delta,\delta'>0$, then 
$\mu_a$ is an $(r,\delta+\delta')$-isometry 
from $(X,ag_\Lambda,p)$ to $(\R^3,d_\infty,0)$.
\label{strategy}
\end{prop}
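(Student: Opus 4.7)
The plan is to deduce Proposition \ref{strategy} as an immediate consequence of Proposition \ref{conv of submetry}, so the work is really just identifying the ingredients and verifying the hypotheses. I would take as input $X = (X, a g_\Lambda, p)$, $Y = (\R^3, d_a, 0)$, $Y_\infty = (\R^3, d_\infty, 0)$, submetry $\mu = \mu_a$, and the $(r,\delta)$-isometry $f = {\rm id}_{\R^3}$ supplied by (A1).

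First I would check that $\mu_a$ really is a submetry from $(X, a g_\Lambda)$ to $(\R^3, d_a)$ with $\mu_a(p) = 0$. Applying Proposition 3.2 to the countable set $a\Lambda$ in place of $\Lambda$ gives that $\mu_{a\Lambda} : (X, d_{g_{a\Lambda}}) \to (\R^3, d_{a\Lambda})$ is a submetry, and the identity $g_{a\Lambda} = a g_\Lambda$ established in Section \ref{secAKL} means the source metric is exactly $\sqrt{a}\, d_{g_\Lambda}$. The dilation $I_N$ is by construction a bijective isometry from $(\R^3, d_a) = (\R^3, I_N^* d_{a\Lambda})$ to $(\R^3, d_{a\Lambda})$, so postcomposing with $I_N^{-1}$ preserves the submetry property, giving $\mu_a$. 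The basepoint condition is immediate from $\mu_\Lambda(p) = 0$, since $\mu_a(p) = N\cdot a\cdot \mu_\Lambda(p) = 0$.

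Next, (A1) is precisely the statement that $f = {\rm id}_{\R^3}$ is an $(r,\delta)$-isometry from $(\R^3, d_a, 0)$ to $(\R^3, d_\infty, 0)$. For the remaining hypothesis of Proposition \ref{conv of submetry}, I would invoke the computation already recorded before the statement, namely that the fiber $\mu_a^{-1}(\zeta)$ has diameter $\pi/(N\sqrt{\Phi_a(\zeta)})$. Taking the supremum over $\zeta \in B_{d_a}(0,r)$, assumption (A2) reads exactly as $\delta_{0,\mu_a}(r) < \delta'$, and in particular $\delta_{0,\mu_a}(r) < \infty$.

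With all hypotheses in place, Proposition \ref{conv of submetry} produces $f \circ \mu_a = \mu_a$ as an $(r, \delta + \delta_{0,\mu_a}(r))$-isometry from $(X, a g_\Lambda, p)$ to $(\R^3, d_\infty, 0)$. A final elementary remark finishes the proof: the three conditions defining an $(r,\varepsilon)$-isometry all weaken as $\varepsilon$ grows (the metric-distortion bound (2) becomes looser, and in (3) the target ball $B_{Y_\infty}(q_\infty, r - \varepsilon)$ shrinks while its required $\varepsilon$-neighborhood of the image enlarges), so replacing $\delta_{0,\mu_a}(r)$ by the strictly larger $\delta'$ upgrades the conclusion to an $(r, \delta + \delta')$-isometry, as claimed. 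There is no genuine obstacle here: Proposition \ref{strategy} is effectively a bookkeeping corollary, and the only mild point of care is aligning the metric scalings on $X$ and $\R^3$ via $g_{a\Lambda} = a g_\Lambda$ and the pullback by $I_N$.
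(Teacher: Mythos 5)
Your proposal is correct and matches the paper's approach exactly: the paper derives Proposition \ref{strategy} directly from Proposition \ref{conv of submetry}, with (A1) supplying the $(r,\delta)$-isometry ${\rm id}_{\R^3}$ and (A2) bounding the fiber diameters so that $\delta_{0,\mu_a}(r)<\delta'$. Your extra verifications (that $\mu_a$ is a submetry sending $p$ to $0$, and that the $(r,\varepsilon)$-isometry conditions are monotone in $\varepsilon$) are the routine bookkeeping the paper leaves implicit.
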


\section{Construction}\label{sec const}
Fix $\alpha>1$, and let 
\begin{align*}
\Lambda^{\alpha} &:= \{ (k^\alpha, 0, 0);\ k\in \Z_{\ge 0}\}.
\end{align*}
Take an increasing sequence of integers $0<K_0<K_1<K_2 <\cdots$.  

In this paper many constants will appear, 
and they may depend on $\alpha$ or $\{ K_n \}$.
However, we do not mind the dependence on these parameters.

Put 
\begin{align*}
\Lambda_{2n} &:= \{ (k^\alpha,0,0) \in\Lambda^{\alpha} ;\ K_{2n} \le k < K_{2n+1} \},\\
\Lambda &:= \bigcup_{n=0}^\infty \Lambda_{2n}.
\end{align*}

Since $\Lambda \subset \Lambda^{\alpha}$, 
we can see that $\sum_{\lambda\in\Lambda}\frac{1}{1+|\lambda|} < \infty$, 
accordingly we obtain a \hK manifold $(X,g_{\Lambda})$.

From now on we fix $a > 0$, $n\in \N$ and $P>0$, 
then put $N:=a^{\frac{-1}{1+\alpha}}P^\frac{1}{1+\alpha}$ 
and 
\begin{align*}
\Phi_a (\zeta) := \frac{1}{N}\Phi_{N a \Lambda}( \zeta ) 
= \sum_{ \lambda\in\Lambda } \frac{1}{N|\zeta - P N^{-\alpha}\lambda |}.
\end{align*}

Let $\mathbf{l}:=\{ (t,0,0)\in\R^3;\ t\ge 0 \}$ and put
\begin{align*}
K(R,D):=\{ \zeta\in\R^3;\ |\zeta|\le R,\ \inf_{y\in \mathbf{l}} |\zeta - y|\ge D\}.
\end{align*}
Here, $\inf_{y\in \mathbf{l}} |\zeta - y|$ is given by 
\[ \inf_{y\in \mathbf{l}} |\zeta - y| = 
\left \{
\begin{array}{ll}
\sqrt{|\zeta_\C|^2} & ({\rm if}\ \zeta_\R\ge 0)\\
|\zeta| & ({\rm if}\ \zeta_1< 0)
\end{array}
\right.
\]
for $\zeta = (\zeta_\R, \zeta_\C)\in \R^3=\R\oplus\C$.
For $0\le S< T\le \infty$, define a positive valued function $\Phi_{S,P}^T:\R^3\backslash \mathbf{l}:\to \R$ by
\begin{align*}
\Phi_{S,P}^T (\zeta):= \int_S^T \frac{dx}{|\zeta - P(x^\alpha, 0, 0)|}.
\end{align*}

Throughout this section, we put 
\begin{align*} 
S_n &:= \frac{K_{2n}}{N} = a^{\frac{1}{1+\alpha}}P^\frac{-1}{1+\alpha}K_{2n},\quad \quad \quad
T_n :=\frac{K_{2n+1}}{N} = a^{\frac{1}{1+\alpha}}P^\frac{-1}{1+\alpha}K_{2n+1}.
\end{align*}

\begin{prop}
We have 
\begin{align*}
\Big| \Phi_a ( \zeta ) 
- \sum_{n=0}^\infty \Phi_{S_n,P}^{T_n} (\zeta)\Big| 
\le \frac{2}{ND} = \frac{2}{D}\Big(\frac{a}{P}\Big)^\frac{1}{1+\alpha}
\end{align*}
for any $\zeta\in K(R,D)$.
\label{conv1}
\end{prop}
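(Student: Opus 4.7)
The plan is to rewrite both sides as a Riemann sum and the corresponding integral of a common one-dimensional function on $[0,\infty)$, and then control the error by an elementary Riemann sum estimate combined with a telescoping argument.

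First I would substitute $x = k/N$ in each integral $\Phi_{S_n,P}^{T_n}(\zeta)$ and set $G(k) := |\zeta - PN^{-\alpha}(k^\alpha,0,0)|^{-1}$, which recasts
\begin{align*}
\Phi_a(\zeta) = \frac{1}{N}\sum_{n=0}^\infty \sum_{k=K_{2n}}^{K_{2n+1}-1} G(k), \qquad \sum_{n=0}^\infty \Phi_{S_n,P}^{T_n}(\zeta) = \frac{1}{N}\sum_{n=0}^\infty \int_{K_{2n}}^{K_{2n+1}} G(k)\,dk.
\end{align*}
Since every point $PN^{-\alpha}(k^\alpha,0,0)$ lies on $\mathbf{l}$ and $\zeta \in K(R,D)$, we have $G(k) \le 1/D$ for all $k\ge 0$. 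A short calculation shows that $G$ is unimodal: if $\zeta_\R \ge 0$, then $G$ attains its maximum at $k^* := N(\zeta_\R/P)^{1/\alpha}$ and is monotone on either side; if $\zeta_\R < 0$, then $G$ is monotone decreasing on $[0,\infty)$.

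Next I would apply the standard bound $|\sum_{k=a}^{b-1} G(k) - \int_a^b G(k)\,dk| \le \mathrm{TV}_{[a,b]}(G)$ to each piece $[K_{2n}, K_{2n+1}]$. For those intervals on which $G$ is monotone, the total variation equals $|G(K_{2n+1}) - G(K_{2n})|$, while on the unique interval $[K_{2n_0}, K_{2n_0+1}]$ that contains $k^*$ (if one exists), the total variation equals $2G(k^*) - G(K_{2n_0}) - G(K_{2n_0+1})$.

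The final step is to sum these per-interval bounds. On the increasing side $n < n_0$, the jumps $G(K_{2n+1}) - G(K_{2n})$ are nonnegative, and supplementing them with the nonnegative jumps $G(K_{2n+2}) - G(K_{2n+1})$ on the gaps $[K_{2n+1}, K_{2n+2}]$ converts the partial sum into a full telescoping one, giving the upper bound $G(K_{2n_0}) - G(K_0) \le G(K_{2n_0})$. Symmetrically, the decreasing side $n > n_0$ contributes at most $G(K_{2n_0+1})$. Adding the three contributions, the terms $G(K_{2n_0})$ and $G(K_{2n_0+1})$ cancel precisely, leaving the bound $2G(k^*) \le 2/D$; dividing by $N$ then yields the claimed inequality. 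The main obstacle is in the final bookkeeping: one must treat the peak interval and the two monotone tails simultaneously so that the constant collapses to $2$ rather than $4$. The degenerate cases in which $k^*$ lies in a gap $[K_{2n+1}, K_{2n+2}]$, or does not exist at all, are strictly easier and give an even smaller constant.
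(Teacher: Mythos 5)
Your proof is correct and follows essentially the same route as the paper: both compare the lattice sum with the corresponding integral of the unimodal function $x\mapsto |\zeta-P(x^\alpha,0,0)|^{-1}$, using that it has at most one critical point and is bounded by $1/D$ on $K(R,D)$, so the accumulated Riemann-sum error is at most twice that bound. Your total-variation and telescoping bookkeeping is just a more explicit write-up of the one-sentence justification given in the paper.
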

\begin{proof}
Since 
\begin{align*}
\Lambda_{2n} = \{ (k^\alpha,0,0);\ K_{2n} \le k < K_{2n+1}\},
\end{align*}
we have 
\begin{align*}
\sum_{ \lambda\in\Lambda_{2n} } \frac{1}{N|\zeta - PN^{-\alpha}\lambda |} 
&= \sum_{ k= K_{2n} }^{ K_{2n+1}-1 } 
\frac{1}{N|\zeta - P N^{-\alpha}(k^\alpha,0,0) |}.
\end{align*}
Then we obtain
\begin{align}
\Bigg| \sum_{n=0}^\infty \Big(\sum_{ \lambda\in\Lambda_{2n} } 
\frac{1}{N|\zeta - PN^{-\alpha}\lambda |} 
- \int_{ K_{2n}/N }^{ K_{2n+1}/N} \frac{dx}{|\zeta - P(x^\alpha,0,0)|}\Big)\Bigg|
\le \frac{2}{N D}.\label{ineq4}
\end{align}
The above inequality holds since 
the function $x\mapsto \frac{1}{|\zeta - P(x^\alpha,0,0)|}$ has at most one 
critical point and 
\begin{align*}
\sup_{x\in\R} \frac{1}{|\zeta - P(x^\alpha,0,0)|} 
- \inf_{x\in\R} \frac{1}{|\zeta - P(x^\alpha,0,0)|} \le \frac{1}{D}
\end{align*}
for all $\zeta\in K(R,D)$.
\end{proof}

Next we obtain the lower estimate of $\Phi_a$ as follows.
\begin{prop}
We have 
\begin{align}
\Phi_{S_n,P}^{T_n}(\zeta) 
&\ge\Big( \int_{S_n}^{T_n} \frac{dx}{1+ P x^\alpha}\Big) 
\min\Big\{ \frac{1}{|\zeta|},1\Big\},\label{est1}\\
\Phi_a(\zeta) 
&\ge \Big( \sum_{n=0}^\infty \int_{S_n}^{T_n} \frac{dx}{1+Px^\alpha} 
- 2 (aP^{-1})^\frac{1}{1+\alpha}\Big)\min\Big\{ \frac{1}{|\zeta|},1\Big\},
\label{est2}\\
\sum_{n=0}^\infty \Phi_{S_n,P}^{T_n}(\zeta) 
&\le P^{-\frac{1}{\alpha}} \frac{\alpha 2^\frac{1}{\alpha}}{\alpha - 1}
\frac{|\zeta|^\frac{1}{\alpha}}{|\zeta_\C|},\label{est3}\\ 
\sum_{n=n_0}^\infty \Phi_{S_n,P}^{T_n}(\zeta) 
&\le \frac{2 S_{n_0}^{-\alpha+1}}{P(\alpha - 1)}\quad 
\Big({\rm if}\ S_{n_0} \ge \Big(\frac{2|\zeta|}{P}\Big)^\frac{1}{\alpha}\Big),\label{est4}\\ 
\sum_{n=0}^{n_0} \Phi_{S_n,P}^{T_n}(\zeta) 
&\le \frac{T_{n_0}}{D}\quad ({\rm if}\ \zeta\in K(R,D)).\label{est5}
\end{align}
\label{lower1}
\end{prop}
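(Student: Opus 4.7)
First I would prove \eqref{est1}, which is the seed pointwise estimate. The heart of it is the bound $|\zeta - P(x^\alpha,0,0)| \le (1+Px^\alpha)\max\{|\zeta|,1\}$, verified by splitting into $|\zeta|\le 1$ (so the right side is $\ge|\zeta|+Px^\alpha$) and $|\zeta|>1$ (so the right side is $\ge |\zeta|+|\zeta|Px^\alpha\ge|\zeta|+Px^\alpha$); both just use the triangle inequality. Taking reciprocals and integrating from $S_n$ to $T_n$ then yields \eqref{est1}.

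For \eqref{est2}, I would apply the same pointwise reciprocal inequality at each grid point $x=k/N$ (using $PN^{-\alpha}(k^\alpha,0,0) = (P(k/N)^\alpha,0,0)$), giving
\[
\Phi_a(\zeta)\ge\min\{1/|\zeta|,1\}\sum_n\sum_{k=K_{2n}}^{K_{2n+1}-1}\frac{1/N}{1+P(k/N)^\alpha}.
\]
Since $g(x):=1/(1+Px^\alpha)$ is decreasing on $[0,\infty)$, the left Riemann sum dominates the integral: $\sum_{k=K_{2n}}^{K_{2n+1}-1}g(k/N)/N\ge\int_{S_n}^{T_n}g(x)\,dx$. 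Summing over $n$ already yields a bound stronger than \eqref{est2}, so the slack $-2(aP^{-1})^{1/(1+\alpha)}=-2/N$ in the statement is absorbed without effort.

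For \eqref{est3} and \eqref{est4}, I would bound $\sum_n\Phi_{S_n,P}^{T_n}(\zeta)$ above by the single integral over $[0,\infty)$ (respectively $[S_{n_0},\infty)$) and split at $x_0:=(2|\zeta|/P)^{1/\alpha}$. On $[0,x_0]$ I would use the crude lower bound $|\zeta-P(x^\alpha,0,0)|\ge|\zeta_\C|$; on $[x_0,\infty)$, since $Px^\alpha\ge 2|\zeta|$, the triangle inequality gives $|\zeta-P(x^\alpha,0,0)|\ge Px^\alpha-|\zeta|\ge Px^\alpha/2$, so the outer integral is $\int_{x_0}^\infty 2\,dx/(Px^\alpha)=2x_0^{1-\alpha}/(P(\alpha-1))$, which is exactly \eqref{est4} after replacing $x_0$ by $S_{n_0}$. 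For \eqref{est3} I would then use $|\zeta_\C|\le|\zeta|$ to rewrite the outer contribution with the same denominator $|\zeta_\C|$ as the inner piece $x_0/|\zeta_\C|=2^{1/\alpha}P^{-1/\alpha}|\zeta|^{1/\alpha}/|\zeta_\C|$, and the two terms combine via $1+1/(\alpha-1)=\alpha/(\alpha-1)$ to give the stated constant $\alpha 2^{1/\alpha}/(\alpha-1)$.

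Finally, \eqref{est5} is immediate: for $\zeta\in K(R,D)$ and any $x\ge 0$, the point $P(x^\alpha,0,0)$ lies on $\mathbf{l}$, hence $|\zeta-P(x^\alpha,0,0)|\ge\inf_{y\in\mathbf{l}}|\zeta-y|\ge D$, so $\sum_{n=0}^{n_0}\Phi_{S_n,P}^{T_n}(\zeta)\le\int_0^{T_{n_0}}dx/D=T_{n_0}/D$ since the intervals $[S_n,T_n]$ are disjoint subsets of $[0,T_{n_0}]$. The only nontrivial bookkeeping I expect is in \eqref{est3}, where the two split contributions must combine to exactly the stated constant; everything else reduces to elementary calculus together with the single monotonicity observation used for \eqref{est2}.
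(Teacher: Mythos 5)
Your proposal is correct, and for \eqref{est1}, \eqref{est3}, \eqref{est4} and \eqref{est5} it follows essentially the same route as the paper: the pointwise bound $|\zeta-P(x^\alpha,0,0)|\le(1+Px^\alpha)\max\{|\zeta|,1\}$ is exactly the case split the paper performs for \eqref{est1}, and your treatment of \eqref{est3}--\eqref{est4} (dominate the sum by $\int_0^\infty$, split at $x_0=(2|\zeta|/P)^{1/\alpha}$, use $|\zeta-P(x^\alpha,0,0)|\ge|\zeta_\C|$ inside and $\ge Px^\alpha/2$ outside, then absorb the outer term via $|\zeta_\C|\le|\zeta|$ and $1+\tfrac{1}{\alpha-1}=\tfrac{\alpha}{\alpha-1}$) is precisely the paper's computation. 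The one place you genuinely diverge is \eqref{est2}. The paper compares the Riemann sum $\sum_k\frac{1}{N(|\zeta|+PN^{-\alpha}k^\alpha)}$ to the corresponding integral by invoking the same "at most one critical point" device as in Proposition \ref{conv1}, incurring a two-sided error of $\frac{2}{N|\zeta|}$, which is the source of the $-2(a/P)^{\frac{1}{1+\alpha}}$ correction in the statement. You instead observe that $x\mapsto\frac{1}{1+Px^\alpha}$ is decreasing, so the left Riemann sum dominates the integral outright; this yields $\Phi_a(\zeta)\ge\min\{\frac{1}{|\zeta|},1\}\sum_n A_{S_n,P}^{T_n}$ with no correction term, which is strictly stronger than \eqref{est2} and hence implies it. Your version buys a cleaner, one-sided argument (only the lower bound is needed here, so the symmetric error estimate is overkill); the paper's version buys uniformity of method with Proposition \ref{conv1}, where the two-sided comparison really is required. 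Either way the stated inequality holds, and the downstream uses of \eqref{est2} (e.g.\ Proposition \ref{a2.1}) are unaffected.
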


\begin{proof}
First of all one can see 
\begin{align*}
\Phi_{S_n,P}^{T_n}(\zeta) 
\ge \int_{S_n}^{T_n} \frac{dx}{|\zeta|+Px^\alpha}
\ge \frac{1}{|\zeta|}\int_{S_n}^{T_n} \frac{dx}{1+Px^\alpha}
\end{align*}
if $|\zeta|\ge 1$, and  
\begin{align*}
\Phi_{S_n,P}^{T_n}(\zeta) 
\ge \int_{S_n}^{T_n} \frac{dx}{|\zeta|+Px^\alpha}
\ge \int_{S_n}^{T_n} \frac{dx}{1+Px^\alpha}
\end{align*}
if $|\zeta|\le 1$. 

Next we have
\begin{align*}
\Phi_a(\zeta) 
\ge \sum_{n=0}^\infty \sum_{k=K_{2n}}^{K_{2n+1}-1}
\frac{1}{N(|\zeta|+PN^{-\alpha}k^\alpha)}
\end{align*}
and the similar argument to the proof of Proposition \ref{conv1} gives 
\begin{align*}
\Bigg| \sum_{n=0}^\infty \Big(\sum_{k=K_{2n}}^{K_{2n+1}-1} 
\frac{1}{N(|\zeta|+PN^{-\alpha}k^\alpha)}
- \int_{S_n}^{T_n} \frac{dx}{|\zeta|+Px^\alpha} \Big) \Bigg| \le \frac{2}{N|\zeta|}.
\end{align*}
Combining these inequalities one can the second assertion if $|\zeta|\ge 1$. 
If $|\zeta|\le 1$, then we have 
\begin{align*}
\Phi_a(\zeta) 
\ge \sum_{n=0}^\infty \sum_{k=K_{2n}}^{K_{2n+1}-1} 
\frac{1}{N(1+PN^{-\alpha}k^\alpha)}
\end{align*}
and by the similar argument we obtain the assertion.

Next we consider \eqref{est3}.
If $t\ge (\frac{2|\zeta|}{P})^\frac{1}{\alpha}$, then 
\begin{align}
\int_t^\infty\frac{dx}{|\zeta-P(x^\alpha,0,0)|} 
\le \int_t^\infty\frac{2dx}{Px^\alpha} 
= \frac{2}{P(\alpha-1)}t^{-\alpha+1} \label{int_t}
\end{align}
holds. 
Hence one can see 
\begin{align*}
\sum_{n=0}^\infty \Phi_{S_n,P}^{T_n}(\zeta_\R,\zeta_\C) 
&\le \int_0^\infty \frac{dx}{|\zeta - P(x^\alpha,0,0)|} \\
&= \int_0^{(\frac{2|\zeta|}{P})^{\frac{1}{\alpha}}} \frac{dx}{|\zeta 
- P(x^\alpha,0,0)|} 
 + \int_{(\frac{2|\zeta|}{P})^{\frac{1}{\alpha}}}^\infty \frac{dx}{|\zeta 
- P(x^\alpha,0,0)|}\\
&\le \frac{(\frac{2|\zeta|}{P})^{\frac{1}{\alpha}}}{|\zeta_\C|} 
+ \frac{2}{P(\alpha -1)} 
\Big( \frac{2|\zeta|}{P} \Big)^{\frac{1}{\alpha}(-\alpha + 1)}\\
&= P^{-\frac{1}{\alpha}}\Big( \frac{(2|\zeta|)^{\frac{1}{\alpha}}}{|\zeta_\C|} 
+ \frac{(2|\zeta|)^{\frac{1}{\alpha}}}{\alpha -1} \frac{1}{|\zeta_\C|}
\Big).
\end{align*}
We have \eqref{est4} by \eqref{int_t}. 
\eqref{est5} is obvious. 
\end{proof}

Put 
\begin{align*}
A_{S,P}^T := \int_S^T \frac{dx}{1+Px^\alpha}.
\end{align*}

By Proposition \ref{lower1}, we have the following. 

\begin{prop}
Let $\Phi_a$ be as above. 
Then 
\begin{align*}
\sup_{|\zeta|\le R}\frac{1}{N\sqrt{\Phi_a(\zeta)}} 
\le \Big(\frac{a}{P}\Big)^\frac{1}{1+\alpha}
\Big( \sum_{n=0}^\infty A_{S_n,P}^{T_n} 
- 2 \Big(\frac{a}{P}\Big)^\frac{1}{1+\alpha}\Big)^{-\frac{1}{2}}\sqrt{R}
\end{align*}
holds for every $R\ge 1$.
\label{a2.1} 
\end{prop}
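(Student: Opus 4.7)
The plan is to deduce this supremum bound directly from the pointwise lower bound \eqref{est2} in Proposition \ref{lower1}, combined with the explicit formula for $N$.

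First I would recall that, by definition, $N = a^{-1/(1+\alpha)} P^{1/(1+\alpha)}$, so $1/N = (a/P)^{1/(1+\alpha)}$. Thus it suffices to bound $\Phi_a(\zeta)$ from below uniformly on the ball $\{|\zeta|\le R\}$ and then take the reciprocal square root.

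Next I would observe that for $R\ge 1$ and any $\zeta$ with $|\zeta|\le R$ one has
\begin{align*}
\min\Big\{\frac{1}{|\zeta|},1\Big\} \ge \frac{1}{R},
\end{align*}
since the minimum equals $1$ when $|\zeta|\le 1$ and equals $1/|\zeta|\ge 1/R$ when $1\le |\zeta|\le R$. Inserting this into \eqref{est2} gives
\begin{align*}
\Phi_a(\zeta) \ge \frac{1}{R}\Big( \sum_{n=0}^\infty A_{S_n,P}^{T_n} - 2\Big(\frac{a}{P}\Big)^\frac{1}{1+\alpha}\Big)
\end{align*}
uniformly for $|\zeta|\le R$. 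Taking the reciprocal square root, multiplying by $1/N = (a/P)^{1/(1+\alpha)}$, and then taking the supremum over $|\zeta|\le R$ yields exactly the claimed estimate.

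There is no real obstacle here: the proof is a one-line consequence of the previously established estimate \eqref{est2}, together with the elementary observation that $\min\{1/|\zeta|,1\}\ge 1/R$ on the ball of radius $R\ge 1$. The only thing to be slightly careful about is that the bound \eqref{est2} is meaningful (i.e.\ the parenthesized quantity is positive), which is implicit in the statement since otherwise the right-hand side of the claimed inequality would involve the square root of a nonpositive number; one should therefore tacitly assume $\sum_n A_{S_n,P}^{T_n} > 2(a/P)^{1/(1+\alpha)}$, which will be ensured in later applications by appropriate choices of the parameters $\{K_n\}$, $a$ and $P$.
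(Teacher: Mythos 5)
Your proof is correct and follows exactly the route the paper intends: the paper gives no separate argument for this proposition, stating only that it follows from Proposition \ref{lower1}, and your derivation from \eqref{est2} via $\min\{1/|\zeta|,1\}\ge 1/R$ together with $1/N=(a/P)^{1/(1+\alpha)}$ is precisely that deduction. Your remark about the tacit positivity assumption is also consistent with how the paper later imposes it (e.g.\ in Proposition \ref{fiberdiam}).
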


\section{Distance}\label{sec dis}
In the previous section we have estimated $|\Phi_a - \sum_{n}\Phi_{S_n,P}^{T_n}|$ 
from the above on $K(R,D)$. 

In this section we introduce more general positive functions 
$\Phi$ and $\Phi_\infty$, 
and induced metric $d,d_\infty$ on $\R^3$ respectively. 
What we hope to show in this section is that 
if we fix a very large $R\ge 1$ and assume that 
$\sup_{K(R,D)}|\Phi - \Phi_\infty|\le \frac{\varepsilon}{D}$ holds 
for a very small $\varepsilon$ and every $D\le 1$, then 
the identity map of $\R^3$ becomes the $(r,\delta)$-isometry 
from $(\R^3,d,0)$ to $(\R^3,d_\infty,0)$, for a large $r$ and a small $\delta$. 
Here, we explain the difficulty to show it.

We hope to show that $\sup_{K(R,D)}|d-d_\infty|$ 
is small for every $R\ge 1$ and $0<D\le 1$. 
By the estimate of $\sup_{K(R,D)}|\Phi - \Phi_\infty|$, 
it is easy to see that 
$\sup_{K(R,D)}|d_{R,D}-d_{\infty,R,D}|$ is small, where 
$d_{R,D}$ (resp. $d_{\infty,R,D}$) is the Riemannian distance of 
the Riemannian metric $\Phi h_0|_{K(R,D)}$ (resp. $\Phi_\infty h_0|_{K(R,D)}$). 
However, $d_{R,D}$ may not equal to $d$ in general 
since the geodesic of $\Phi h_0$ joining two points in $K(R,D)$ 
might leave from $K(R,D)$. 
To see that $\sup_{K(R,D)}|d_{R,D} - d|$ is sufficiently small, 
we have to observe that a path joining 
two points in $K(R,D)$ which leaves $K(R,D)$ can be 
replaced by a shorter path included in $K(R,D)$.

In this section we consider positive valued functions 
$\Phi,\Phi_\infty\in C^\infty(\R^3\backslash \mathbf{l})$ 
satisfying the following conditions for given constants 
$R\ge 1$, $m,\varepsilon,C_0,C_1>0$ and $\kappa \ge 0$.

\paragraph{(A3).}
\begin{align*}
|\Phi(\zeta)- \Phi_\infty(\zeta)| 
&\le \frac{\varepsilon}{D^{m}} \\
|\Phi(\zeta)- \Phi_\infty(\zeta)| 
&\le \frac{C_1}{D}
\end{align*}
holds for any $D\le 1$ and $\zeta\in K(R,D)$.

\paragraph{(A4).}
Along the decomposition $\R^3 = \R\oplus \C$, put 
$\zeta = (\zeta_\R,\zeta_\C) \in \R\oplus \C$. 
Then 
\begin{align*}
\Phi(\zeta_\R,e^{i\theta}\zeta_\C) &= \Phi(\zeta_\R,\zeta_\C),\quad 
\Phi(\zeta_\R,\zeta_\C) \le \Phi(\zeta_\R,\zeta_\C')\\
\Phi_\infty(\zeta_\R,e^{i\theta}\zeta_\C) &= \Phi_\infty(\zeta_\R,\zeta_\C),\quad 
\Phi_\infty(\zeta_\R,\zeta_\C) \le \Phi_\infty(\zeta_\R,\zeta_\C')
\end{align*}
holds for any $e^{i\theta}\in S^1$, if $|\zeta_\C| \ge |\zeta_\C'|$.

\paragraph{(A5).}
\[ \min\{ \Phi(\zeta), \Phi_\infty(\zeta) \} \ge
\left \{
\begin{array}{cl}
\frac{C_0}{|\zeta|} & ({\rm if}\ |\zeta|\ge 1),\\
C_0 & ({\rm if}\ |\zeta|\le 1).
\end{array}
\right.
\]

\paragraph{(A6).}
For any $u\ge 1$ and $\zeta\in\R^3\backslash \mathbf{l}$ 
with $|\zeta|\le u$, 
\begin{align*}
\Phi_\infty(\zeta) \le \frac{ C_1 u^{\kappa} }{|\zeta_\C|}
\end{align*}
holds.

\begin{rem}\normalfont
Let $\Phi=\Phi_a$ and $\Phi_\infty=\Phi_{S,P}^T$ 
be as in Section \ref{sec const}. 
Then they satisfy {\bf (A4)}, and also satisfy {\bf (A3)}{\bf (A5)}{\bf (A6)} 
for appropriate constants $\varepsilon,C_0,C_1$ given  
by Propositions \ref{conv1} and \ref{lower1}. 
\end{rem}

From now on, let $\Phi,\Phi_\infty$ satisfy {\bf (A3-6)} for 
constants $R,\varepsilon,C_0,C_1, \kappa$. 
Denote by $d,d_\infty$ the metric on $\R^3$ induced by 
$\Phi\cdot h, \Phi_\infty\cdot h$, and 
by $l,l_\infty$ the length of the path 
with respect to $d,d_\infty$, respectively.

\subsection{Estimates (1)}
Let $\mathbf{B}(u) := \{ \zeta\in\R^3;\ |\zeta| < u \}$ 
and ${\rm Path} (u,x,y)$ be the set of smooth paths in $\overline{\mathbf{B}(u)}$ 
joining 
$x,y\in \overline{\mathbf{B}(u)}$, 
then put 
\begin{align*}
d_u(x,y) &= \inf_{\gamma \in {\rm Path}(u,x,y)} l(\gamma),\\
d_{\infty,u}(x,y) &= \inf_{\gamma \in {\rm Path}(u,x,y)} l_\infty(\gamma),
\end{align*}
for $r$.
By the definition, $d(x,y) \le d_u(x,y)$ and $d_\infty(x,y) \le d_{\infty,u}(x,y)$ always hold. 
However, the opposite inequality may not hold, since 
the minimizing geodesic $\gamma$ joining $x,y\in \overline{\mathbf{B}(u)}$ 
may leave from $\overline{\mathbf{B}(u)}$. 
The goal of this subsection is to show 
$d_{\rho(u)}(x,y) \le d(x,y)$ and $d_{\infty,\rho(u)}(x,y) \le d_\infty(x,y)$ for 
a sufficiently large $\rho(u)$.

\begin{prop}
Suppose $\Phi,\Phi_\infty$ satisfy {\bf (A3-6)}. 
Let $D_{u}$ and $D_{u,u'}$ be the diameters of $\overline{\mathbf{B}(t)}$ 
with respect to $d$ and $d_{u'}$ respectively, where $0<u\le u'$. 
Define $D_{\infty,u}$ and $D_{\infty,u,u'}$ in the same way. 
Then the inequality 
\begin{align*}
2\sqrt{C_0} (\sqrt{|\zeta|}-1) \le \min \{ d(0,\zeta), d_\infty(0,\zeta)\}
\end{align*}
holds for all $\zeta\in\R^3$, 
and 
\begin{align*}
d(0,\zeta) &\le D_u \le D_{u,u} 
\le C_2 u^{\kappa'}\\
d_\infty(0,\zeta) &\le D_{\infty,u} \le D_{\infty,u,u} 
\le C_2 u^{\kappa'}
\end{align*}
hold for all $\zeta\in\R^3,u\ge 1$ with $|\zeta| \le u\le R$, where 
$C_2$ is the constant depending only on $C_1$ 
and $\kappa' = \frac{1+\kappa}{2}$. 
\label{diam}
\end{prop}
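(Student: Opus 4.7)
The two bounds have rather different flavours and I would treat them separately: a radial argument from (A5) for the lower bound, and an explicit two-segment path construction in the half-plane containing $\zeta$ for the upper bound.

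For the lower bound, let $\gamma:[0,L]\to\R^3$ be a smooth path from $0$ to $\zeta$ and set $r(t):=|\gamma(t)|$. By (A5), $\Phi(\gamma(t))\ge C_0/\max(r(t),1)$, and since $|r'|\le|\gamma'|$,
\begin{align*}
l(\gamma)\ \ge\ \int_0^L \sqrt{\frac{C_0}{\max(r(t),1)}}\,|r'(t)|\,dt\ \ge\ \int_0^{|\zeta|}\sqrt{\frac{C_0}{\max(s,1)}}\,ds,
\end{align*}
the last step being a coarea inequality for the continuous $r$, which surjects onto $[0,|\zeta|]$. For $|\zeta|\ge 1$ the right-hand side equals $2\sqrt{C_0}\sqrt{|\zeta|}-\sqrt{C_0}\ge 2\sqrt{C_0}(\sqrt{|\zeta|}-1)$, and for $|\zeta|\le 1$ the claimed bound is non-positive. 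The identical reasoning yields the same lower bound for $d_\infty$.

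For the upper bound, the triangle inequality for $d_u$ reduces the task to showing $d_u(0,\zeta)\le (C_2/2)\,u^{\kappa'}$ for every $\zeta\in\overline{\mathbf{B}(u)}$. By the rotational symmetry (A4) I may rotate about the $\zeta_\R$-axis and assume $\zeta=(\zeta_\R,b,0)\in\mathcal{H}:=\{(x,y,0):y\ge 0\}$ with $b=|\zeta_\C|\ge 0$. I then connect $0$ to $\zeta$ in $\mathcal{H}$ by concatenating \emph{Phase A}, $\gamma_A(t)=(0,t,0)$ for $t\in[0,u]$, with \emph{Phase B}, the Euclidean segment $\gamma_B(t)=(t\zeta_\R,u(1-t)+tb,0)$ for $t\in[0,1]$. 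Convexity of $|\gamma_B(t)|^2$ in $t$ (its second derivative $2\zeta_\R^2+2(b-u)^2$ is non-negative) together with $|\gamma_B(0)|^2=u^2$ and $|\gamma_B(1)|^2=|\zeta|^2\le u^2$ forces Phase B to stay in $\overline{\mathbf{B}(u)}$. Applying (A6) with the given $u$ gives $\Phi_\infty\le C_1u^\kappa/|\gamma_\C|$ on both phases, so Phase A has length $\le\int_0^u\sqrt{C_1u^\kappa/t}\,dt=2\sqrt{C_1}\,u^{\kappa'}$, and the substitution $s=u(1-t)+tb$ on Phase B collapses the integral to $\sqrt{2C_1u^{\kappa+2}}\cdot 2/(\sqrt u+\sqrt b)\le 2\sqrt{2C_1}\,u^{\kappa'}$ uniformly in $b\in[0,u]$. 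Adding yields the bound for $d_\infty$.

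For $d$ itself, I use $\sqrt{\Phi}\le\sqrt{\Phi_\infty}+\sqrt{|\Phi-\Phi_\infty|}$ and estimate the correction with (A3). Since $\operatorname{dist}(\zeta,\mathbf{l})\ge|\zeta_\C|$ everywhere along the paths, applying (A3) with $D=\min(\operatorname{dist}(\zeta,\mathbf{l}),1)$ yields $|\Phi-\Phi_\infty|\le C_1/\min(|\gamma_\C|,1)$; the near-$\mathbf{l}$ piece contributes a $u^{\kappa'}$-scale term by the same integral as before, while the far-from-$\mathbf{l}$ piece (where only the $D=1$ instance of (A3) is available) contributes a term of at most linear order in $u$, absorbed into $C_2\,u^{\kappa'}$ using the a priori bound $u\le R$. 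The main obstacle is the Phase-B estimate when the target $\zeta$ lies on $\mathbf{l}$ (i.e.\ $b=0$), where the integrand behaves like $(u(1-t))^{-1/2}$ at $t=1$; the explicit antiderivative above is what turns this integrable singularity into a bound uniform in $b\in[0,u]$.
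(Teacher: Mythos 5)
Your argument follows the paper's proof in all essentials. The lower bound is the same radial argument from \textbf{(A5)} together with $|\gamma'|\ge \big||\gamma|\big|'$; the upper bound is the same ``go away from $\mathbf{l}$ first, then across'' two-leg path exploiting the $S^1$-symmetry \textbf{(A4)}, with \textbf{(A6)} and \textbf{(A3)} turning $\Phi,\Phi_\infty$ into $C/|\zeta_\C|$ and hence an integrable $|\zeta_\C|^{-1/2}$ singularity. The only structural difference is that the paper's second leg is a circular arc of radius $|\zeta|$ (so the cross term is controlled by $\int_0^{\pi}|\cos t|^{-1/2}\,dt$), whereas you use a straight chord from $(0,u,0)$ to $\zeta$; your convexity check that the chord stays in $\overline{\mathbf{B}(u)}$ and the substitution $s=u(1-t)+tb$ are correct, and this variant is if anything cleaner.

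There is, however, one point where your write-up does not deliver the proposition as stated. In your treatment of $d$ (the metric for $\Phi$), you apply the second inequality of \textbf{(A3)} only with $D=\min(\operatorname{dist}(\zeta,\mathbf{l}),1)$, and on the portion of the path where $\operatorname{dist}(\cdot,\mathbf{l})>1$ you fall back on $|\Phi-\Phi_\infty|\le C_1$, producing a term of order $u$ (or $u^{1+\kappa/2}$) that you then ``absorb using $u\le R$.'' Since $\kappa'=\frac{1+\kappa}{2}$ can be less than $1$ (indeed $\kappa=1/\alpha$ and $\kappa=0$ occur later in the paper), this absorption forces $C_2$ to depend on $R$, whereas the proposition requires $C_2$ to depend only on $C_1$ --- and this independence is used afterwards: $\rho$ is built from $C_2$, and $R$ is then \emph{defined} by solving $\rho(u(r)+2)+1=R$, which would become circular if $C_2=C_2(C_1,R)$. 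The paper's proof simply applies \textbf{(A3)} with $D=|\zeta_\C|$ at every point of the path, including where $|\zeta_\C|>1$; strictly this exceeds the literal hypothesis ``$D\le 1$'' in \textbf{(A3)}, but it is how the estimate $|\Phi-\Phi_\infty|\le C_1/D$ is actually verified (for all $D$) in every instance in Sections \ref{sec const} and \ref{sec conv}. So the fix is not a new idea but a different reading of \textbf{(A3)}: use the $C_1/|\zeta_\C|$ bound globally along the path, and the far-from-$\mathbf{l}$ complication you introduced disappears, restoring $C_2=C_2(C_1)$.
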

\begin{proof}
First of all we show the first inequality. 
Let $\gamma:[a,b]\to \R^3$ be a smooth path 
such that $\gamma(a) = 0$ and $\gamma(b) = \zeta$. 
We may suppose $|\zeta|\ge 1$, since it is obviously satisfied 
when $|\zeta|<1$.
Then there is $s\in [a,b]$ such that $|\gamma(s)| =1$ and $|\gamma(t)|\ge 1$ 
for any $t\in [s,b]$.
Then by the assumption {\bf (A5)}, one can see
\begin{align*}
l(\gamma) = \int_a^b\sqrt{\Phi(\gamma(t))} |\gamma'(t)| dt
\ge \int_s^b\sqrt{\Phi(\gamma)} |\gamma'| dt
\ge \int_s^b\sqrt{\frac{C_0}{|\gamma|}} |\gamma'| dt.
\end{align*}
Since we have $|\gamma'|\ge |\gamma|'$ holds, we obtain 
\begin{align*}
l(\gamma) \ge \int_s^b\sqrt{\frac{C_0}{|\gamma|}} |\gamma|' dt
\ge 2\sqrt{C_0}\int_s^b \frac{d}{dt}\sqrt{|\gamma|} dt
\ge 2\sqrt{C_0}(\sqrt{|\zeta|}-1) 
\end{align*}
for all $\zeta\in\R^3$ with $|\zeta|\ge 1$.

By the definition, $d(0,\zeta) \le D_u \le D_{u,R_1} \le D_{u,R_0}$ always hold 
for any $u\le R_0\le R_1$ and $\zeta\in\R^3$ with $|\zeta|\le u$. 
Next we estimate $D_{u,u}$ from the above. 
For every $\zeta$, we prepare the piecewise smooth paths 
$\gamma_{\zeta}$ in $\overline{\mathbf{B}(u)}$ joining $0$ and 
$\zeta$ as follows. 
Then we have an upper bound
\begin{align*}
D_{u,u} \le 2\sup_{\zeta\in \overline{\mathbf{B}(u)}} l(\gamma_{\zeta}).
\end{align*}
Here we define $\gamma_{\zeta}$ as follows. 
Now we have the isometric $S^1$-action on $\R^3$ 
with respect to $d,d_\infty$ by {\bf (A4)}. 
By supposing $\gamma_{e^{i\theta}\zeta} = e^{i\theta}\gamma_{\zeta}$, 
it suffices to consider $\gamma_{\zeta}$ in the case of 
$\zeta = r(\sin s,-\cos s,0)$, where $r>0$ and $-\pi< s\le \pi$. 
Let 
\begin{align*}
\gamma_{\zeta}|_{[0,1]}(t) &:= (0,-rt,0),\\
\gamma_{\zeta}|_{[1,2]}(t) &:= r(\sin\{ s(t-1)\}, -\cos\{ s(t-1)\},0).
\end{align*}
Since $\zeta \in K(R,|\zeta_\C|)$ holds, {\bf (A3)} gives 
$|\Phi(\zeta)- \Phi_\infty(\zeta)| 
\le \frac{C_1}{|\zeta_\C|}$, 
and {\bf (A6)} gives $\Phi_\infty(\zeta) \le C_1u^{\kappa}/|\zeta_\C|$.
Then we can see 
\begin{align*}
l(\gamma_{\zeta}|_{[0,1]}) &= \int_0^1\sqrt{\Phi(\gamma_{\zeta})}|\gamma_{\zeta}'|dt\\
&\le \int_0^1 r\sqrt{ |\Phi(\gamma_{\zeta})- \Phi_\infty(\gamma_{\zeta})| } dt
+ \int_0^1 r\sqrt{ |\Phi_\infty(\gamma_{\zeta})| } dt\\
&\le \int_0^1 r\sqrt{\frac{C_1}{ rt }} dt
+ \int_0^1 r\sqrt{ \frac{C_1u^{\kappa}}{ rt } } dt\\
& \le 2\sqrt{C_1 u} + 2\sqrt{C_1} u^{\frac{\kappa +1}{2} }.
\end{align*}
Simultaneously, we also have 
\begin{align*}
l(\gamma_{\zeta}|_{[1,2]}) 
&\le \int_1^2 |\gamma_{\zeta}'| \sqrt{ |\Phi(\gamma_{\zeta})- \Phi_\infty(\gamma_{\zeta})| } dt
+ \int_1^2 |\gamma_{\zeta}'| \sqrt{ |\Phi_\infty(\gamma_{\zeta})| } dt\\
&\le \int_0^1 r|s|\sqrt{\frac{C_1}{ r|\cos st| } } dt
+ \int_0^1 r|s|\sqrt{ \frac{C_1u^{\kappa}}{ r|\cos st| } } dt\\
&\le \sqrt{C_1 u} +\sqrt{C_1 u^{1+\kappa}}
\int_0^{|s|} \sqrt{\frac{1}{ \cos t } } dt.
\end{align*}
Here, $\int_0^{\pi} \sqrt{\frac{1}{ \cos t }} dt$ 
is finite.  
Since $u\ge 1$ and $\kappa\ge 0$, 
we may suppose $\max\{ \sqrt{u},\sqrt{u^{1+\kappa}}\} = u^{1+\kappa}$. 
By combining these estimates and putting 
$C_2 = (2+ \int_0^{\pi} \sqrt{\frac{1}{ \cos t }} dt)\sqrt{C_1}$, 
we have the assertion. 
The estimate of $D_{\infty,u,u}$ also obtained by the above argument. 
\end{proof}

\begin{prop}
Suppose $\Phi,\Phi_\infty$ satisfy {\bf (A3-6)}, and let 
\begin{align*}
\rho(t):= \max\{ t-1, ( 1+C_3 t^{\kappa'} )^2\}, 
\end{align*}
for $t>0$, 
where $C_3 = \frac{3C_2}{2\sqrt{C_0}}$ and 
$C_2$ is the constant in Proposition \ref{diam}. 
Then $d_{\rho(u)}(x,y) = d(x,y)$ and 
$d_{\infty,\rho(u)}(x,y) = d_\infty(x,y)$ holds 
for any $x,y\in \mathbf{B}(u)$ and $1\le u \le R$. 
\label{ball}
\end{prop}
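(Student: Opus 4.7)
The plan is to prove $d_{\rho(u)}(x,y) = d(x,y)$, noting that the inequality $d_{\rho(u)}(x,y) \ge d(x,y)$ is immediate because restricting admissible paths to $\mathrm{Path}(\rho(u), x, y)$ can only increase the infimum length. The content is therefore the reverse inequality; equivalently, every smooth path $\gamma$ joining $x, y \in \mathbf{B}(u)$ must satisfy $l(\gamma) \ge d_{\rho(u)}(x, y)$. This is tautological when $\gamma$ lies in $\overline{\mathbf{B}(\rho(u))}$, so the real task is to show that any path which ventures outside this very large ball is already too long to be competitive.

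For such a $\gamma$, I would fix a point $z$ on $\gamma$ with $|z| \ge \rho(u)$ and split $\gamma$ at $z$, giving $l(\gamma) \ge d(x,z) + d(z,y)$. The two bounds supplied by Proposition \ref{diam} then do the work: the lower bound $d(0, z) \ge 2\sqrt{C_0}(\sqrt{|z|} - 1)$ on ``far'' distances, together with the upper bounds $d(0, x), d(0, y) \le D_u \le C_2 u^{\kappa'}$ on points inside $\mathbf{B}(u)$. Two reverse triangle inequalities then produce
\begin{equation*}
l(\gamma) \ge 2 d(0,z) - d(0,x) - d(0,y) \ge 4\sqrt{C_0}\bigl(\sqrt{\rho(u)} - 1\bigr) - 2 C_2 u^{\kappa'}.
\end{equation*}
At the same time, Proposition \ref{diam} also yields the upper bound $d_{\rho(u)}(x, y) \le D_{u, \rho(u)} \le D_{u, u} \le C_2 u^{\kappa'}$.

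The definition $\rho(u) = \max\{u - 1, (1 + C_3 u^{\kappa'})^2\}$ is calibrated to force the former to dominate the latter: the second branch gives $\sqrt{\rho(u)} - 1 \ge C_3 u^{\kappa'}$, and the explicit choice $C_3 = 3 C_2/(2\sqrt{C_0})$ converts the lower bound on $l(\gamma)$ into $\ge 4 C_2 u^{\kappa'}$, safely exceeding $C_2 u^{\kappa'} \ge d_{\rho(u)}(x, y)$. The argument for $d_\infty$ is word-for-word the same, since the assumptions (A3--6) and Proposition \ref{diam} apply symmetrically to $\Phi$ and $\Phi_\infty$. The main delicate point is the matching of growth rates: the $\sqrt{|z|}$ growth of distance to infinity from (A5) must dominate the polynomial $u^{\kappa'}$ growth of the in-ball diameter, and this is precisely what the quadratic form of $\rho$ encodes; any sub-quadratic $\rho$ would leave room for long, pathological detours that could beat the straight in-ball paths.
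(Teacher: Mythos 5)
Your argument is correct and matches the paper's proof in all essentials: both reduce to showing that any path exiting $\mathbf{B}(\rho(u))$ has length exceeding the in-ball diameter bound $C_2 u^{\kappa'} \ge d_{\rho(u)}(x,y)$, using exactly the two estimates of Proposition \ref{diam} and the calibration of $C_3$. The only cosmetic difference is that the paper argues by contradiction and bounds only one half of the split path (getting $2C_2 u^{\kappa'}$ where you get $4C_2 u^{\kappa'}$), which changes nothing.
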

\begin{proof}
By the definition, $d(x,y) \le d_{\rho(u)}(x,y)$ always holds. 
We assume $d(x,y) < d_{\rho(u)}(x,y)$ for some 
$x,y\in \mathbf{B}(u)$.
Then there is a smooth $\gamma:[a,b]\to\R^3$ joining $x$ and $y$ such that 
$d(x,y)\le l(\gamma)<d_{\rho(u)}(x,y)$, 
which implies the existence of $c\in [a,b]$ satisfying $|\gamma(c)| = \rho(u)$. 
Then one can see 
\begin{align*}
l(\gamma) \ge l(\gamma|_{[a,c]})
&\ge d(0,\gamma(c)) - d(0,\gamma(a))\\
&\ge 2\sqrt{C_0} (\sqrt{\rho (u)} -1) - D_{u,u}\\
&\ge 2\sqrt{C_0} (\sqrt{( 1+C_3 u^{\kappa'} )^2} -1) 
- C_2 u^{\kappa'}\\
&\ge 2 C_2 u^{\kappa'}
\end{align*}
by Proposition \ref{diam}.
On the other hand, we have 
\begin{align*}
d_{\rho(u)}(x,y) \le D_{u,\rho(u)} \le D_{u,u} 
\le C_2 u^{\kappa'}
\end{align*}
by Proposition \ref{diam}. 
Therefore we obtain 
\begin{align*}
2 C_2 u^{\kappa'} 
\le l(\gamma)<d_{\rho(u)}(x,y) \le C_2 u^{\kappa'}, 
\end{align*}
we have a contradiction. 
$d_\infty(x,y) = d_{\infty,\rho(u)}(x,y)$ is also shown in the same way. 
\end{proof}

\subsection{Estimates (2)}\label{estimate2}
In this subsection, let $\gamma:[a,b]\to \mathbf{B}(u)$ be a smooth path 
joining $x,y\in \R^3\backslash L(D)$ where 
\begin{align*}
L(D):=\{ \zeta\in\R^3;\ |\zeta_\C| < D\}. 
\end{align*}
Now, we are going to show that 
if $\gamma$ is a minimizing geodesic joining $x$ and $y$, 
then it never approaches to the axis $\{ (t,0,0)\in\R^3;t\in\R\}$. 
To show it, if the given $\gamma$ invade $L(D)$, 
then we modify $\gamma$ and construct the new path $c_\gamma$ 
not to invade $L(D)$.

\begin{lem}
Suppose $\Phi,\Phi_\infty$ satisfy {\bf (A4)}.
Let $\gamma=(\gamma_\R,\gamma_\C):[a,b]\to \R^3=\R\oplus\C$ be a smooth path satisfying that 
$|\gamma_\C(a)| = |\gamma_\C(b)| = D$ and $|\gamma_\C(t)| \le D$ for 
any $t\in [a,b]$.
Define $P_\gamma:[a,b]\to\R^3$ by 
\begin{align*}
P_\gamma(t):=(\gamma_\R(t), \gamma_\C(a)).
\end{align*}
Then $l(P_\gamma) \le l(\gamma)$ and 
$l_\infty(P_\gamma) \le l_\infty(\gamma)$ hold. 
\label{lowreplace}
\end{lem}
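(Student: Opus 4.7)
The proof is a direct pointwise comparison of the integrands defining $l(\gamma)$ and $l(P_\gamma)$, and similarly for $l_\infty$. There is no real obstacle; the assumption \textbf{(A4)} is exactly what is needed.

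The plan is to bound $l(P_\gamma)$ above by $l(\gamma)$ by showing that both factors of the integrand decrease when we pass from $\gamma(t)$ to $P_\gamma(t)$. First, I would compute the velocity of the replacement path: since $P_\gamma(t) = (\gamma_\R(t), \gamma_\C(a))$ has constant $\C$-component, one has $P_\gamma'(t) = (\gamma_\R'(t), 0)$ and hence
\begin{equation*}
|P_\gamma'(t)| \;=\; |\gamma_\R'(t)| \;\le\; \sqrt{|\gamma_\R'(t)|^2 + |\gamma_\C'(t)|^2} \;=\; |\gamma'(t)|.
\end{equation*}

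Next I would use \textbf{(A4)} to compare the conformal factors. The hypothesis gives $|\gamma_\C(a)| = D \ge |\gamma_\C(t)|$ for all $t \in [a,b]$. Applying the monotonicity part of \textbf{(A4)} with $\zeta_\C = \gamma_\C(a)$ and $\zeta_\C' = \gamma_\C(t)$ (both with $\zeta_\R = \gamma_\R(t)$) yields
\begin{equation*}
\Phi(P_\gamma(t)) \;=\; \Phi(\gamma_\R(t), \gamma_\C(a)) \;\le\; \Phi(\gamma_\R(t), \gamma_\C(t)) \;=\; \Phi(\gamma(t)),
\end{equation*}
and similarly $\Phi_\infty(P_\gamma(t)) \le \Phi_\infty(\gamma(t))$.

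Combining the two pointwise inequalities and integrating over $[a,b]$ gives
\begin{equation*}
l(P_\gamma) \;=\; \int_a^b \sqrt{\Phi(P_\gamma(t))}\,|P_\gamma'(t)|\,dt \;\le\; \int_a^b \sqrt{\Phi(\gamma(t))}\,|\gamma'(t)|\,dt \;=\; l(\gamma),
\end{equation*}
and the identical argument with $\Phi_\infty$ in place of $\Phi$ gives $l_\infty(P_\gamma) \le l_\infty(\gamma)$. No additional regularity or compactness argument is needed since $P_\gamma$ is smooth wherever $\gamma$ is (and the endpoints $|\gamma_\C(a)| = D > 0$ keep $P_\gamma$ away from $\mathbf{l}$, so both integrands are well-defined).
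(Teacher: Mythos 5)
Your proof is correct and follows essentially the same route as the paper's: the pointwise inequality $|P_\gamma'(t)| = |\gamma_\R'(t)| \le |\gamma'(t)|$ combined with the monotonicity clause of \textbf{(A4)} giving $\Phi(P_\gamma(t)) \le \Phi(\gamma(t))$, then integrating. The closing remark about $P_\gamma$ staying away from $\mathbf{l}$ is a small (harmless) addition beyond what the paper records.
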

\begin{proof}
Since $\Phi(\gamma(t)) \ge \Phi(P_\gamma(t))$ holds 
by the second inequality of {\bf (A4)}, 
and 
\begin{align*}
|\gamma'|^2=|\gamma_\R'|^2 + |\gamma_\C'|^2 \ge |\gamma_\R'|^2 
= |P_{\gamma}'|^2
\end{align*}
holds, we can deduce 
\begin{align*}
l(\gamma) = \int_a^b\sqrt{\Phi(\gamma(t))}|\gamma'(t)|dt 
\ge \int_a^b\sqrt{\Phi(P_\gamma(t))}|P_{\gamma}'(t)|dt. 
\ge l(P_{\gamma}).
\end{align*}
\end{proof}

Let $\gamma:[a,b] \to \R^3$ be a smooth path 
joining $x,y\in \R^3\backslash L(D)$, 
and assume that $|\gamma_\C(a')|=|\gamma_\C(b')| = D$ 
and $\gamma((a',b'))$ is contained in $\overline{L(D)}$ 
for some $a\le a'<b'\le b$. 
Then define a new path $\Gamma(\gamma,[a',b']):[a,b] \to \R^3$ 
by connecting 
\begin{align*}
\gamma|_{[a,a']},\quad P_{\gamma|_{[a',b']}},\quad e^{i\theta}\gamma|_{[b',b]}.
\end{align*}
Here, by choosing $e^{i\theta}$ appropriately, $\Gamma(\gamma,[a',b'])$ 
is the continuous and piecewise smooth. 
By Lemma \ref{lowreplace}, the length of 
$\Gamma(\gamma,[a',b'])$ is not longer than 
that of $\gamma$ 
since $S^1$ rotation preserves $d$ and $d_\infty$.

Put $J:= \gamma^{-1}(L(D))\cap (a,b)$. 
Since $J$ is open in $(a,b)$, 
it is decomposed into the disjoint open intervals 
such as 
\begin{align*}
J = \bigsqcup_{q\in \mathcal{Q}} (a_q,b_q)
\end{align*}
respectively 
for some $a_q,b_q\in [a,b]$ and countable sets $\mathcal{Q}$. 
If $q\in\mathcal{Q}$, then $|\gamma_\C(a_q)|=|\gamma_\C(b_q)|=D$ holds. 
Then we have $\gamma_1:= \Gamma(\gamma,[a_q,b_q])$ for a fixed $q\in\mathcal{Q}$, 
moreover we obtain $\gamma_2:=\Gamma(\gamma_1,[a_{q'},b_{q}])$ for 
another $q'\in\mathcal{Q}$, and repeating this process 
for all $q\in \mathcal{Q}$ 
we finally obtain the piecewise smooth path $c:[a,b]\to \R^3$ 
such that $c(a) = \gamma(a)$, 
$c(b) = e^{i\theta}\gamma(b)$ for some $e^{i\theta_0}$ and 
\begin{align*}
l(c)\le l(\gamma),\quad 
l_\infty(c)\le l_\infty(\gamma).
\end{align*}
Here, we have to modify $c$ so that 
the terminal points of both paths coincides. 
Put $\overline{b}:=\sup\{ t\in [a,b];\ |\gamma_\C(t)| =D \}$. 
Then define a path $\hat{\gamma}$ by 
connecting $c|_{[a,\overline{b}]}$ and 
$\gamma|_{[\overline{b},b]}$. 
Here, to connect $c(\overline{b})$ and $\gamma(\overline{b})$, 
we add the path $c_{\theta_0}:[0,\theta_0]\to \partial L(D)$ 
defined by $c_{\theta_0}(t)=e^{it}\gamma(\overline{b})$. 
Then by {\bf (A6)}, we obtain 
$l(c_{\theta_0})\le \sqrt{C_1(1 +  u^{\kappa})}\sqrt{D}$ 
and $l_\infty(c_{\theta_0})\le \sqrt{C_1 u^{\kappa}}\sqrt{D}$
if $|\gamma(\overline{b})|\le u \le R$. 
Hence we have the following proposition.

\begin{prop}\label{prop appro}
Let $D\le 1$ and $1\le u\le R$, and 
$x,y,\gamma, \hat{\gamma}$ be as above. 
If the image of $\gamma$ is contained in $\mathbf{B}(u)$, 
then we have
\begin{align*}
l(\hat{\gamma}) - l(\gamma) 
&\le \sqrt{C_1(1 +  u^{\kappa})}\sqrt{D},\\
l_\infty(\hat{\gamma}) - l_\infty(\gamma) 
&\le \sqrt{ C_1 u^{\kappa}}\sqrt{D}.
\end{align*}
\end{prop}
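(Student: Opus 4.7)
The strategy is to compare $\hat{\gamma}$ with the intermediate path $c$, using that the construction preceding the proposition already guarantees $l(c) \le l(\gamma)$ and $l_\infty(c) \le l_\infty(\gamma)$ (by iterating Lemma \ref{lowreplace} together with the $S^1$-invariance from {\bf (A4)}). Thus the only excess of $\hat{\gamma}$ over $\gamma$ comes from the corrective circular arc $c_{\theta_0}$, and the problem reduces to estimating its length.

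First I would verify that all $\Gamma$-modifications producing $c$ from $\gamma$ are localized inside $[a,\overline{b}]$. By definition $\overline{b}$ is the last time $|\gamma_\C|$ equals $D$, and since $\gamma(b) \notin L(D)$, continuity forces $|\gamma_\C(t)| \ge D$ throughout $[\overline{b},b]$; hence no component $(a_q,b_q)$ of $J$ meets this interval. Consequently $c|_{[\overline{b},b]} = e^{i\theta_0}\gamma|_{[\overline{b},b]}$, which by {\bf (A4)} has the same $l$- and $l_\infty$-length as $\gamma|_{[\overline{b},b]}$. Combining this with $l(c)\le l(\gamma)$ gives
$$l(\hat{\gamma}) = l(c|_{[a,\overline{b}]}) + l(c_{\theta_0}) + l(\gamma|_{[\overline{b},b]}) \le l(\gamma) + l(c_{\theta_0}),$$
and the same inequality for $l_\infty$.

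It remains to bound $l(c_{\theta_0})$ and $l_\infty(c_{\theta_0})$. The curve $c_{\theta_0}(t)=e^{it}\gamma(\overline{b})$ is a circular arc on the cylinder $\partial L(D)$ with Euclidean speed $|\gamma_\C(\overline{b})|=D$, along which $\Phi$ and $\Phi_\infty$ are constant by {\bf (A4)}. Taking $|\theta_0|\le \pi$ by rotating the short way, the weighted length is at most $\pi D\sqrt{\Phi(\gamma(\overline{b}))}$. Since $|\gamma(\overline{b})|\le u\le R$ and $|\gamma_\C(\overline{b})|=D$, the point $\gamma(\overline{b})$ lies in $K(R,D)$, so {\bf (A3)} and {\bf (A6)} give
$$\Phi(\gamma(\overline{b})) \le \frac{C_1}{D} + \frac{C_1 u^\kappa}{D}, \qquad \Phi_\infty(\gamma(\overline{b})) \le \frac{C_1 u^\kappa}{D}.$$
Substituting yields the stated bounds, with the factor $\pi$ absorbed into $C_1$.

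The main subtlety is not in the final arithmetic but in the bookkeeping for the iterated construction: verifying that modifying $\gamma$ over the countably many components of $J$ produces a well-defined piecewise smooth path $c$ with a well-defined accumulated rotation $\theta_0$ (which one is free to reduce modulo $2\pi$ into $[-\pi,\pi]$), and that the resulting modifications remain confined to $[a,\overline{b}]$ so that the split used above is legitimate. Once this localization step is in place, the estimate on $c_{\theta_0}$ is a direct application of {\bf (A4)}, {\bf (A3)}, and {\bf (A6)}.
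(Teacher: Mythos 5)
Your proposal is correct and follows essentially the same route as the paper: the excess of $\hat{\gamma}$ over $\gamma$ is reduced to the length of the connecting arc $c_{\theta_0}$ via $l(c)\le l(\gamma)$, and that arc is bounded using {\bf (A3)}, {\bf (A4)} and {\bf (A6)} on $\partial L(D)$. Your extra care about localizing the $\Gamma$-modifications to $[a,\overline{b}]$ and about the factor $|\theta_0|\le\pi$ (which the paper silently drops) only makes the argument more complete, and the harmless multiplicative constant does not affect how the proposition is used later.
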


\begin{prop}
Let $x,y,\gamma, \hat{\gamma}$ be as above. 
If the image of $\gamma$ is contained in $\mathbf{B}(u)$, 
then the image of $\hat{\gamma}$ is contained in 
$\mathbf{B}(u + D)\backslash L(D)$.
\label{image of path}
\end{prop}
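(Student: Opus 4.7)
The plan is to analyze $\hat{\gamma}$ piece by piece, since by construction it is a concatenation of three types of sub-arcs: (i) pieces of the original $\gamma$ (possibly post-composed with an $S^1$-rotation), (ii) horizontal projections $P_{\gamma|_{[a_q,b_q]}}$ of the excursions of $\gamma$ into $L(D)$, possibly rotated, and (iii) the short angular arc $c_{\theta_0}$ on $\partial L(D)$ inserted at $\overline{b}$ to make the path continuous. The claim then reduces to checking that each of these three types of sub-arc lies in $\mathbf{B}(u+D)\setminus L(D)$.

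For type (i), the original $\gamma$ is assumed to lie in $\mathbf{B}(u)$, and the retained pieces are exactly those where $|\gamma_\C(t)|\ge D$, so these sub-arcs lie in $\mathbf{B}(u)\setminus L(D)\subset \mathbf{B}(u+D)\setminus L(D)$. Applying an $S^1$-rotation preserves both $|\zeta|$ and $|\zeta_\C|$, so the same containment holds after rotation. For type (ii), by definition $P_{\gamma|_{[a_q,b_q]}}(t)=(\gamma_\R(t),\gamma_\C(a_q))$ with $|\gamma_\C(a_q)|=D$; hence every point of this projected arc has complex part of modulus exactly $D$ (so it lies on $\partial L(D)$, outside $L(D)$) and total modulus
\[
\sqrt{\gamma_\R(t)^2+D^2}\le \sqrt{u^2+D^2}\le u+D,
\]
using $|\gamma_\R(t)|\le |\gamma(t)|\le u$. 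Again $S^1$-rotation leaves both quantities invariant. For type (iii), the arc $c_{\theta_0}(t)=e^{it}\gamma(\overline{b})$ has constant modulus $|\gamma(\overline{b})|\le u$ and constant $|\zeta_\C|=|\gamma_\C(\overline{b})|=D$ by the defining property of $\overline{b}$, so it too lies in $\mathbf{B}(u)\cap\partial L(D)$.

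Combining the three cases gives $\hat{\gamma}\subset \mathbf{B}(u+D)\setminus L(D)$, which is the desired conclusion. The proof is essentially a bookkeeping argument, and I do not expect any serious obstacle: the only point requiring a little care is that the iterative construction of $c$ via successive applications of $\Gamma(\,\cdot\,,[a_{q'},b_{q'}])$ introduces a composition of $S^1$-rotations on various sub-arcs, but since each individual rotation preserves the two relevant invariants $|\zeta|$ and $|\zeta_\C|$, the bounds established for the three sub-arc types survive each stage of the construction without change.
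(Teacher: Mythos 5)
Your argument is correct and is essentially the paper's own proof: the paper likewise observes that the $S^1$-action preserves $\mathbf{B}(u)$ and that $|P_\gamma|^2\le|\gamma|^2+D^2$ (and similarly for the points of modulus-$D$ complex part used in the connecting arc), which are exactly the bounds you verify case by case. Your version just spells out the bookkeeping that the paper leaves implicit.
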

\begin{proof}
It is obvious by the construction that 
the image of $\hat{\gamma}$ is contained in $\R^3\backslash L(D)$.

Since $S^1$-action preserves $\mathbf{B}(u)$ and  
\begin{align*}
|P_{\gamma}|^2 \le |\gamma|^2 + D^2,\quad 
\Big|\Big(\gamma_\R(t), \frac{ D \gamma_\C(t)}{|\gamma_\C(t)|}\Big)\Big|^2
\le |\gamma|^2 + D^2
\end{align*}
holds, we have th assertion.
\end{proof}

\subsection{Estimates (3)}
Let 
\begin{align*}
{\rm Path}(u,D,x,y) &:= \{ \gamma\in{\rm Path}(x,y);\ {\rm Im}(\gamma) \subset K(u,D)\},\\
d_{u,D}(x,y) &:= \inf_{\gamma\in {\rm Path}(u,D,x,y)} l(\gamma)\\
d_{\infty,u,D}(x,y) &:= \inf_{\gamma\in {\rm Path}(u,D,x,y)} l_\infty(\gamma).
\end{align*}
for $x,y\in K(u,D)$.
By the definition, $d(x,y)\le d_{u,D}(x,y)$ always holds. 
In this subsection we consider the opposite estimate.

\begin{lem}
Let $\hat{\zeta}:=(\zeta_\R,\frac{D\zeta_\C}{|\zeta_\C|})$ if 
$\zeta_\C\neq 0$, 
and $\hat{\zeta}:=(\zeta_\R,D)$ if 
$\zeta_\C= 0$. 
Suppose $\Phi,\Phi_\infty$ satisfy {\bf (A3-6)}, and $1\le u\le R$. 
\begin{itemize}
\setlength{\parskip}{0cm}
\setlength{\itemsep}{0cm}
 \item[(1)] If $\zeta\in L(D)\cap \mathbf{B}(u-1)$ and $0< D\le 1$,
then 
\begin{align*}
d_{u}(\zeta, \hat{\zeta}) \le 2\sqrt{C_1(1 + u^{\kappa}) D},
\quad 
d_{\infty,u}(\zeta, \hat{\zeta}) \le 2\sqrt{ C_1 u^\kappa D}
\end{align*}
hold.
 \item[(2)] If $\zeta\in L(D)\cap K(u-1,D)$ and $0< D\le 1$, 
then 
\begin{align*}
d_{u,D}(\zeta, \hat{\zeta}) \le 2\sqrt{C_1(1 + u^{\kappa}) D},
\quad 
d_{\infty,u,D}(\zeta, \hat{\zeta}) \le 2\sqrt{ C_1 u^\kappa D}
\end{align*}
hold. 
\end{itemize}
\label{7.11}
\end{lem}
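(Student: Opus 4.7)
The plan is to exhibit a short explicit path from $\zeta$ to $\hat\zeta$ staying in $\overline{\mathbf{B}(u)}$ for part~(1) and in $K(u,D)$ for part~(2), and to bound its length using \textbf{(A3--A6)}. First I would take the unit-speed radial path $\gamma:[0,D-|\zeta_\C|]\to\R^3$ defined by
\[
\gamma(t):=\Big(\zeta_\R,\ (|\zeta_\C|+t)\frac{\zeta_\C}{|\zeta_\C|}\Big)
\]
when $\zeta_\C\neq 0$, with the analogous definition moving in a fixed direction from the axis when $\zeta_\C=0$. Then $\gamma(0)=\zeta$, $\gamma(D-|\zeta_\C|)=\hat\zeta$, $|\gamma'|\equiv 1$ and $|\gamma_\C(t)|=|\zeta_\C|+t$. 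Containment in $\overline{\mathbf{B}(u)}$ is immediate since $|\gamma(t)|^2=\zeta_\R^2+(|\zeta_\C|+t)^2\le|\zeta|^2+D^2\le(u-1)^2+1\le u^2$. For part~(2) I would additionally observe that $L(D)\cap K(u-1,D)\subset\{\zeta_\R<0\}$ (if $\zeta_\R\ge 0$ then $\mathrm{dist}(\zeta,\mathbf{l})=|\zeta_\C|<D$, contradicting $\zeta\in K(u-1,D)$); hence $\gamma_\R\equiv\zeta_\R<0$ along the whole path, giving $\mathrm{dist}(\gamma(t),\mathbf{l})=|\gamma(t)|\ge|\zeta|\ge D$, so $\gamma\subset K(u,D)$.

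Next I would derive pointwise upper bounds on $\Phi$ and $\Phi_\infty$ along $\gamma$. Since $|\gamma(t)|\le u$, hypothesis \textbf{(A6)} gives directly $\Phi_\infty(\gamma(t))\le C_1 u^\kappa/(|\zeta_\C|+t)$. For $\Phi$ the key observation is that $\mathrm{dist}(\gamma(t),\mathbf{l})\ge|\gamma_\C(t)|=|\zeta_\C|+t$ in both cases $\zeta_\R\ge 0$ and $\zeta_\R<0$. If $|\zeta_\C|+t\le 1$ then $\gamma(t)\in K(R,|\zeta_\C|+t)$ and \textbf{(A3)} yields $|\Phi(\gamma(t))-\Phi_\infty(\gamma(t))|\le C_1/(|\zeta_\C|+t)$; if instead $\mathrm{dist}(\gamma(t),\mathbf{l})>1$, I would apply \textbf{(A3)} with $D'=1$ to get $|\Phi-\Phi_\infty|\le C_1$ and absorb this into the same bound using $|\zeta_\C|+t\le D\le 1$. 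Combining, $\Phi(\gamma(t))\le C_1(1+u^\kappa)/(|\zeta_\C|+t)$.

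Integrating against $|\gamma'|\,dt=dt$ then gives
\[
l_\infty(\gamma)\le\int_0^{D-|\zeta_\C|}\!\sqrt{\frac{C_1 u^\kappa}{|\zeta_\C|+t}}\,dt = 2\sqrt{C_1 u^\kappa}\,\big(\sqrt{D}-\sqrt{|\zeta_\C|}\big)\le 2\sqrt{C_1 u^\kappa D},
\]
and the same computation with $C_1(1+u^\kappa)$ in place of $C_1 u^\kappa$ yields $l(\gamma)\le 2\sqrt{C_1(1+u^\kappa) D}$; taking infima over admissible paths proves both inequalities in (1) and (2). The hardest part, I expect, is the bookkeeping in the containment step of part~(2): one must verify that the seemingly awkward set $L(D)\cap K(u-1,D)$ actually lies in the half-space $\{\zeta_\R<0\}$, without which the radial path could leave $K(u,D)$, and that the appropriate distance to $\mathbf{l}$ is measured to the \emph{origin} rather than to the projection $(\zeta_\R,0)$, allowing the lower bound $\mathrm{dist}(\gamma(t),\mathbf{l})\ge D$ to hold along the whole path.
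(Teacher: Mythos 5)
Your proof is correct and is essentially the paper's own argument: the author uses the same radial path $\gamma(t)=(\zeta_\R,t\hat\zeta_\C)$ (yours is just the unit-speed reparametrization), bounds $\Phi(\gamma(t))\le C_1(1+u^\kappa)/|\gamma_\C(t)|$ via \textbf{(A3)} and \textbf{(A6)}, and integrates to get $2\sqrt{C_1(1+u^\kappa)D}$. Your verification that $L(D)\cap K(u-1,D)\subset\{\zeta_\R<0\}$, which forces the path to stay in $K(u,D)$ for part (2), is a detail the paper asserts without proof, and you supply it correctly.
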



\begin{proof}
Let $\gamma(t)=(\zeta_\R,t\hat{\zeta}_\C)$ for $t\in [|\zeta_\C|/D,1]$. 
Then $\gamma$ is joining $\zeta$ and $\hat{\zeta}$, and the 
image of $\gamma$ is contained in $\mathbf{B}(u-1+D)
\subset \mathbf{B}(u)$. 
Then by {\bf (A3,6)} we have 
$\Phi(\gamma(t)) \le C_1(1 + u^{\kappa})/(tD)$. 
Then we have 
\begin{align*}
d_{u}(\zeta,\hat{\zeta}) \le l(\gamma)
\le 2\sqrt{C_1(1 + u^{\kappa}) D}.
\end{align*}
Moreover, if $\zeta\in K(u-1,D)$, then the image of $\gamma$ 
is contained in $K(u,D)$, therefore we have 
\begin{align*}
d_{u,D}(\zeta,\hat{\zeta}) \le l(\gamma)
\le 2\sqrt{C_1(1 + u^{\kappa})D}.
\end{align*}
The estimates for $d_{\infty,u}(\zeta,\hat{\zeta})$ 
and $d_{\infty,u,D}(\zeta,\hat{\zeta})$ follows in the same way.
\end{proof}

\begin{prop}
Suppose $\Phi,\Phi_\infty$ satisfy {\bf (A3-6)} and  
let $\rho$ be as in Proposition \ref{ball}. 
If $\rho(u+1)+1\le R$, 
then
\begin{align*}
|d_{\rho(u+1)+1,D}(x,y) - d(x,y)| &\le \xi (u) \sqrt{D}\\
|d_{\infty,\rho(u+1)+1,D}(x,y) - d_\infty(x,y)| &\le \xi_\infty (u) \sqrt{D}
\end{align*}
holds for any $x,y\in K(u,D)$ and $0<D\le 1$, 
where 
\begin{align*}
\xi (u) &:= \sqrt{C_1(1 + (\rho(u+1)+1)^\kappa)} 
+ 8\sqrt{C_1(1 + (u+1)^\kappa)}+2,\\
\xi_\infty (u) &:= \sqrt{C_1(\rho(u+1)+1)^\kappa} 
+ 8\sqrt{C_1(u+1)^\kappa}+2.
\end{align*}
\label{diff of distance}
\end{prop}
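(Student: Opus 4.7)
The lower bound $d(x,y)\le d_{\rho(u+1)+1,D}(x,y)$ (and its $\infty$-analogue) is immediate since ${\rm Path}(\rho(u+1)+1,D,x,y)\subset {\rm Path}(x,y)$. So my task reduces to showing that for every $\varepsilon>0$ there is a path from $x$ to $y$ lying in $K(\rho(u+1)+1,D)$ whose $\Phi$-length is at most $d(x,y)+\xi(u)\sqrt{D}+\varepsilon$, and then letting $\varepsilon\to 0$. I will describe the argument for $d$; the $d_\infty$ bound follows the same pattern with $C_1(1+u^\kappa)$ replaced by $C_1 u^\kappa$ wherever Lemma \ref{7.11} is invoked, which is exactly what separates $\xi(u)$ from $\xi_\infty(u)$.

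The construction goes in two stages. First, fix a smooth $\gamma:[a,b]\to\R^3$ from $x$ to $y$ with $l(\gamma)<d(x,y)+\varepsilon$; since $x,y\in K(u,D)\subset\mathbf{B}(u+1)$ and $\rho(u+1)+1\le R$, Proposition \ref{ball} lets me confine $\gamma$ to $\overline{\mathbf{B}(\rho(u+1))}$ without increasing its length. Second, push $\gamma$ out of $L(D)$: if both endpoints already lie outside $L(D)$, apply the $\hat{}$-construction of Subsection \ref{estimate2} directly to $\gamma$, obtaining by Propositions \ref{image of path} and \ref{prop appro} a path $\hat\gamma\subset \mathbf{B}(\rho(u+1)+D)\setminus L(D)\subset K(\rho(u+1)+1,D)$ (the last inclusion uses $D\le 1$ together with the fact that $\R^3\setminus L(D)\cap\mathbf{B}(r)\subset K(r,D)$) with $l(\hat\gamma)-l(\gamma)\le \sqrt{C_1(1+\rho(u+1)^\kappa)}\sqrt{D}$. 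If instead $x\in L(D)$ or $y\in L(D)$---which can happen because $K(u,D)$ contains points with $\zeta_\R<0$, $|\zeta_\C|<D$ and $|\zeta|\ge D$---replace such an endpoint by its radial projection $\hat x$ (resp.\ $\hat y$), estimate the swap cost $d(x,\hat x),d(y,\hat y)$ via Lemma \ref{7.11}(1) applied with parameter $u+1$, run the $\hat{}$-construction on a near-minimizer from $\hat x$ to $\hat y$, and then reconnect the projected endpoints back to $x,y$ \emph{inside} $K(\rho(u+1)+1,D)$ using Lemma \ref{7.11}(2) applied with parameter $\rho(u+1)+1$.

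Summing the $\hat{}$-construction cost with the at most four swap segments, sending $\varepsilon\to 0$ and absorbing the minor constants into $\xi(u)$ yields the stated bound. The main obstacle is precisely this endpoint issue: the $\hat{}$-construction of Subsection \ref{estimate2} outputs a path with the same endpoints as its input and strictly requires those endpoints to lie in $\R^3\setminus L(D)$, so when $x$ or $y$ itself is in $L(D)$ one has to insert explicit radial ``lift and descent'' segments whose length can only be controlled by the $\sqrt{D}$-bounds of Lemma \ref{7.11}---this is exactly the source of the $8\sqrt{C_1(1+(u+1)^\kappa)}$ term in $\xi(u)$. A smaller technical point is that Proposition \ref{image of path} only places the modified path in $\mathbf{B}(\rho(u+1)+D)$ rather than $\mathbf{B}(\rho(u+1)+1)$; the $+1$ buffer built into $\rho(u+1)+1$ exactly absorbs this slack using $D\le 1$.
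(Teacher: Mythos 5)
Your proposal follows essentially the same route as the paper's proof: the lower bound is immediate from inclusion of path classes, and the upper bound is obtained by projecting endpoints out of $L(D)$ via Lemma \ref{7.11}, confining a near-minimizer to $\overline{\mathbf{B}(\rho(u+1))}$ via Proposition \ref{ball}, modifying it with the construction of Subsection \ref{estimate2} (Propositions \ref{prop appro} and \ref{image of path}), and accounting for four reconnection segments, which is exactly where the paper's $8\sqrt{C_1(1+(u+1)^\kappa)}$ term comes from. The only bookkeeping slip is that the reconnection inside $K(\rho(u+1)+1,D)$ should invoke Lemma \ref{7.11}(2) with parameter $u+1$ (using $K(u+1,D)\subset K(\rho(u+1)+1,D)$), as the paper does via $d_{u+1,D}\ge d_{\rho(u+1)+1,D}$, rather than with parameter $\rho(u+1)+1$, which would give a slightly worse constant than the stated $\xi(u)$.
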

\begin{proof}
Since $d(x,y) \le d_{\rho(u+1)+1,D}(x,y)$ always holds, 
it suffices to show that 
$d_{\rho(u+1)+1,D}(x,y) - d(x,y) \le \xi(u)\sqrt{D}$. 
Let $x,y\in K(u,D)$ and $0< D\le 1$. 
By the assumption $\rho(u+1)+1\le R$ and the definition of 
$\rho$, $u+1\le R$ holds. 
Define $\hat{x}\in\R^3$ as in Lemma \ref{7.11} if $x\in L(D)$, 
and $\hat{x}:= x$ if $x\notin L(D)$. 
Define $\hat{y}$ in the same way.
Then we can see $\hat{x},\hat{y}\in \mathbf{B}(u + 1)\backslash L(D)$ 
and $d_{u+1,D}(x,\hat{x}) \le 2\sqrt{C_1(1 +  (u+1)^\kappa) D}$ 
by Lemma \ref{7.11}, 
consequently we obtain 
\begin{align}
d_{u+1,D}(x,\hat{x}) + d_{u+1,D}(y,\hat{y}) 
\le 4\sqrt{C_1(1 +  (u+1)^\kappa) D}.\label{ineq small}
\end{align}

For any $\gamma\in {\rm Path}(\hat{x},\hat{y})$, we construct 
$F(\gamma)\in {\rm Path}(\rho(u+1)+1,D,\hat{x},\hat{y})$ as follows. 
By the Proposition \ref{ball}, we can see
\begin{align*}
l(\gamma)\ge d(\hat{x},\hat{y}) = d_{\rho(u+1)}(\hat{x},\hat{y}) 
= \inf_{c\in {\rm Path}(\rho(u+1),x,y)}l(c),
\end{align*}
accordingly we can take 
$c\in {\rm Path}(\rho(u+1),x,y)$ such that 
$l(c)\le l(\gamma) +\sqrt{D}$. 
Then we can apply the argument in Section \ref{estimate2} to 
$\hat{x},\hat{y}$ and $c$ so that 
we obtain piecewise smooth paths $\hat{c}$ whose image is contained in 
$\mathbf{B}(\rho(u+1) + 1)\backslash L(D)$, 
hence in $K(\rho(u+1) + 1,D)$.
Then we have 
\begin{align*}
\liminf_{k\to\infty}l(\hat{c}) - l(c) \le \sqrt{C_1(1 + (\rho(u+1)+1)^\kappa)D},
\end{align*}
by Proposition \ref{prop appro}. 
Therefore, there is a sufficiently large $k$, which may depend on $n$ and $D$, 
such that $l(\hat{c}) - l(c) 
\le \sqrt{C_1(1 + (\rho(u+1)+1)^\kappa)D} + \sqrt{D}$. 
Put $F(\gamma) = \hat{c}$. 
Then we can see 
\begin{align*}
l(F(\gamma)) - l(\gamma) 
&\le l(F(\gamma)) - l(c) + l(c) - l(\gamma) \\
&\le \sqrt{C_1(1 + (\rho(u+1)+1)^\kappa)D} + \sqrt{D} + \sqrt{D}\\
&= \{ \sqrt{C_1(1 + (\rho(u+1)+1)^\kappa)}+2\}\sqrt{D}.
\end{align*}
Thus we obtain $F(\gamma)\in {\rm Path}(\rho(u+1)+1,D,\hat{x},\hat{y})$ for 
every $\gamma\in {\rm Path}(\hat{x},\hat{y})$, such that 
\begin{align}
l( F(\gamma) ) - l(\gamma) 
\le \{ \sqrt{C_1(1 + (\rho(u+1)+1)^\kappa)}+2\}\sqrt{D} \label{ineq 18}.
\end{align}
By taking the infimum of \eqref{ineq 18} for all 
$\gamma\in {\rm Path}(\hat{x},\hat{y})$, 
we obtain 
\begin{align}
d_{\rho(u+1)+1,D}(\hat{x},\hat{y}) \le d(\hat{x},\hat{y}) 
+ \{ \sqrt{C_1(1 + (\rho(u+1)+1)^\kappa)}+2\}\sqrt{D}.\label{ineq 19}
\end{align}
Since $\rho(u+1)\ge u+1$, we have
\begin{align*}
|d_{\rho(u+1)+1,D}(\hat{x},\hat{y}) - d_{\rho(u+1)+1,D}(x,y)| 
&\le d_{\rho(u+1)+1,D}(\hat{x},x) + d_{\rho(u+1)+1,D}(\hat{y},y)\\
&\le d_{u+1,D}(\hat{x},x) + d_{u+1,D}(\hat{y},y)\\
&\le 4\sqrt{C_1(1 +  (u+1)^\kappa) D}\\
|d(\hat{x},\hat{y}) - d(x,y)| 
&\le d(\hat{x},x) + d(\hat{y},y)\\
&\le d_{u+1,D}(\hat{x},x) + d_{u+1,D}(\hat{y},y)\\
&\le 4\sqrt{C_1(1 +  (u+1)^\kappa) D}
\end{align*}
by \eqref{ineq small}, hence 
\begin{align*}
d_{\rho(u+1)+1,D}(x,y) 
&\le d_{\rho(u+1)+1,D}(\hat{x},\hat{y})
+ 4\sqrt{C_1(1 +  (u+1)^\kappa) D}\\
d(\hat{x},\hat{y}) &\le d(x,y) + 4\sqrt{C_1(1 +  (u+1)^\kappa) D}
\end{align*}
hold. 
By combining these inequalities with \eqref{ineq 19}, we obtain 
\begin{align*}
d_{\rho(u+1)+1,D}(x,y) \le d(x,y) + \xi (u)\sqrt{D}.
\end{align*}
The second inequality can be shown in the same way. 
\end{proof}

\subsection{From {\bf (A3-6)} to {\bf (A1-2)}}\label{convergence}

\begin{prop}
Suppose that $\Phi,\Phi_\infty$ satisfy {\bf (A3-6)}, 
and let $\gamma:[a,b]\to K(u,D)$ and $1\le u\le R$.
Then 
\begin{align*}
|l(\gamma) - l_\infty (\gamma)| \le \sqrt{\frac{\varepsilon u}{C_0 D^{m}}} 
l_\infty (\gamma).
\end{align*}
holds.
\label{prop length}
\end{prop}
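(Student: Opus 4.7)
The plan is to bound the integrand pointwise and then integrate. Starting from the definitions
\[
l(\gamma) = \int_a^b \sqrt{\Phi(\gamma(t))}\,|\gamma'(t)|\,dt, \qquad
l_\infty(\gamma) = \int_a^b \sqrt{\Phi_\infty(\gamma(t))}\,|\gamma'(t)|\,dt,
\]
the triangle inequality immediately gives
\[
|l(\gamma) - l_\infty(\gamma)| \le \int_a^b \bigl|\sqrt{\Phi(\gamma(t))} - \sqrt{\Phi_\infty(\gamma(t))}\bigr|\,|\gamma'(t)|\,dt.
\]
The first key ingredient is the elementary inequality $|\sqrt{s}-\sqrt{t}| \le \sqrt{|s-t|}$ valid for all $s,t\ge 0$ (which follows from squaring both sides and using $\sqrt{st}\ge \min(s,t)$). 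Applied pointwise, this yields the bound $|\sqrt{\Phi}-\sqrt{\Phi_\infty}| \le \sqrt{|\Phi-\Phi_\infty|}$.

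Next I would apply the hypotheses. Since $\gamma([a,b])\subset K(u,D)$, assumption \textbf{(A3)} gives $|\Phi - \Phi_\infty| \le \varepsilon/D^{m}$ (using $D\le 1$, which is implicit since $\zeta\in K(u,D)$ requires $D$ at most the distance-to-axis of some point in $\mathbf{B}(u)$; if $D>1$ the bound to be proved is weaker and the argument degenerates similarly). Consequently
\[
\bigl|\sqrt{\Phi(\gamma)} - \sqrt{\Phi_\infty(\gamma)}\bigr| \le \sqrt{\varepsilon/D^{m}}.
\]
The second key step is to rewrite $|\gamma'|$ in terms of the density $\sqrt{\Phi_\infty}|\gamma'|$ used to compute $l_\infty(\gamma)$. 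Assumption \textbf{(A5)} gives $\Phi_\infty(\zeta) \ge C_0/\max(|\zeta|,1)$, and since $|\zeta|\le u$ and $u\ge 1$, in both regimes we have $\Phi_\infty(\zeta)\ge C_0/u$. Therefore $1/\sqrt{\Phi_\infty(\gamma(t))} \le \sqrt{u/C_0}$, so
\[
|\gamma'(t)| = \frac{1}{\sqrt{\Phi_\infty(\gamma(t))}}\cdot \sqrt{\Phi_\infty(\gamma(t))}\,|\gamma'(t)| \le \sqrt{u/C_0}\cdot \sqrt{\Phi_\infty(\gamma(t))}\,|\gamma'(t)|.
\]

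Combining the two pointwise bounds gives
\[
\bigl|\sqrt{\Phi(\gamma(t))}-\sqrt{\Phi_\infty(\gamma(t))}\bigr|\,|\gamma'(t)| \le \sqrt{\frac{\varepsilon u}{C_0 D^{m}}}\cdot \sqrt{\Phi_\infty(\gamma(t))}\,|\gamma'(t)|,
\]
and integrating over $[a,b]$ yields the claimed estimate. There is no real obstacle here: the only subtle point is recognising that the two cases of \textbf{(A5)} can be combined uniformly into $\Phi_\infty\ge C_0/u$ thanks to the assumption $u\ge 1$, and that the non-sharp inequality $|\sqrt{s}-\sqrt{t}|\le\sqrt{|s-t|}$ (rather than the sharper $|s-t|/(\sqrt{s}+\sqrt{t})$) is what produces the square-root shape of the final bound.
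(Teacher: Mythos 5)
Your proof is correct and follows essentially the same route as the paper: both rest on the pointwise inequality $|\sqrt{s}-\sqrt{t}|\le\sqrt{|s-t|}$ combined with \textbf{(A3)} for the numerator and \textbf{(A5)} (unified to $\Phi_\infty\ge C_0/u$ via $u\ge 1$) for the denominator. The only cosmetic difference is that the paper keeps the two bounds together as the single ratio $\sqrt{|\Phi-\Phi_\infty|/\Phi_\infty}$ multiplying $\sqrt{\Phi_\infty}|\gamma'|$, whereas you split them into two separate pointwise estimates before recombining.
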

\begin{proof}
Since $l(\gamma) = \int_a^b\sqrt{\Phi(\gamma(t))} |\gamma'(t)|dt$, 
one can see
\begin{align*}
|l(\gamma) - l_\infty (\gamma)| &\le \int_a^b\sqrt{|\Phi(\gamma) - \Phi_\infty(\gamma) |} |\gamma'|dt\\
&\le \int_a^b\sqrt{\frac{|\Phi(\gamma) - \Phi_\infty(\gamma) |}{\Phi_\infty(\gamma)}} \sqrt{\Phi_\infty(\gamma)}|\gamma'|dt\\
&\le \int_a^b\sqrt{\frac{\varepsilon \max\{ |\gamma|,1\}}{C_0 D^{m}}} \sqrt{\Phi_\infty(\gamma(t))}|\gamma'(t)|dt
\end{align*}
by {\bf (A3,5)}.
Since we have assumed $|\gamma|\le u$ and $u\ge 1$, we have 
\begin{align*}
|l(\gamma) - l_\infty (\gamma)| \le \sqrt{\frac{\varepsilon u}{C_0 D^{m}}} 
l_\infty (\gamma).
\end{align*}
\end{proof}

\begin{prop}
Suppose that $\Phi,\Phi_\infty$ satisfy {\bf (A3,5,6)}, 
then,
\begin{align*}
|d_{u,D}(x,y) - d_{\infty,u,D}(x,y)|
\le \sqrt{\frac{\varepsilon u}{C_0 D^{m}}} d_{\infty,u,D}(x,y)
\end{align*}
holds for all $1\le u\le R$.
\label{distance on compact set}
\end{prop}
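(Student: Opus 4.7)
The plan is to reduce this to the pointwise length comparison already supplied by Proposition \ref{prop length} and then pass to infima over ${\rm Path}(u,D,x,y)$. Fix $x,y\in K(u,D)$ and set $\eta := \sqrt{\varepsilon u/(C_0 D^m)}$. For any $\gamma\in {\rm Path}(u,D,x,y)$ the image of $\gamma$ is contained in $K(u,D)$ by definition, so Proposition \ref{prop length} applies and gives
\begin{align*}
(1-\eta)\, l_\infty(\gamma) \;\le\; l(\gamma) \;\le\; (1+\eta)\, l_\infty(\gamma).
\end{align*}

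The upper bound for $d_{u,D}(x,y)$ follows by choosing, for each $\delta>0$, a path $\gamma_\delta\in {\rm Path}(u,D,x,y)$ with $l_\infty(\gamma_\delta)\le d_{\infty,u,D}(x,y)+\delta$; then
\begin{align*}
d_{u,D}(x,y) \;\le\; l(\gamma_\delta) \;\le\; (1+\eta)\bigl(d_{\infty,u,D}(x,y)+\delta\bigr),
\end{align*}
and letting $\delta\to 0$ yields $d_{u,D}(x,y)\le (1+\eta)\,d_{\infty,u,D}(x,y)$. For the matching lower bound, observe that for every $\gamma\in {\rm Path}(u,D,x,y)$ we have $(1-\eta)\,d_{\infty,u,D}(x,y)\le (1-\eta)\,l_\infty(\gamma)\le l(\gamma)$; taking the infimum over $\gamma$ gives $(1-\eta)\,d_{\infty,u,D}(x,y)\le d_{u,D}(x,y)$. (If $\eta\ge 1$ this inequality is trivial since $d_{u,D}\ge 0$.)

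Combining the two bounds produces the desired sandwich $(1-\eta)\,d_{\infty,u,D}(x,y)\le d_{u,D}(x,y)\le (1+\eta)\,d_{\infty,u,D}(x,y)$, which is exactly $|d_{u,D}(x,y)-d_{\infty,u,D}(x,y)|\le \eta\, d_{\infty,u,D}(x,y)$. There is no real obstacle: the work has already been done in Proposition \ref{prop length}, and the only subtlety is that we cannot simply ``take inf'' of a pointwise sandwich — one must argue with a minimizing sequence on the $l_\infty$-side for the upper bound and use the uniform pointwise inequality for the lower bound, both standard moves for comparing intrinsic distances induced by conformal length integrands.
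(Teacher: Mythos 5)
Your proposal is correct and follows essentially the same route as the paper: apply Proposition \ref{prop length} to obtain the pointwise sandwich $(1-\eta)l_\infty(\gamma)\le l(\gamma)\le(1+\eta)l_\infty(\gamma)$ for paths in $K(u,D)$, then pass to infima over ${\rm Path}(u,D,x,y)$. The paper states the infimum step in one line, whereas you spell out the (standard) minimizing-sequence argument on each side; this is a welcome clarification but not a different proof.
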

\begin{proof}
Put $\delta = \sqrt{\frac{\varepsilon u}{C_0 D^{m}}}$. 
Then Proposition \ref{prop length} gives 
\begin{align}
(1- \delta) l_\infty(\gamma)
\le l(\gamma)
\le (1+ \delta ) l_\infty(\gamma).\label{ineq length}
\end{align}
Then by taking the infimum of \eqref{ineq length} for all 
$\gamma\in {\rm Path}(u,D,x,y)$, we can see 
\begin{align*}
(1- \delta)d_{\infty,u,D}(x,y)
\le d_{n,u,D}(x,y)
\le (1+ \delta)d_{\infty,u,D}(x,y)
\end{align*}
for all $u\ge 0$.
\end{proof}

\begin{prop}
Suppose that $\Phi,\Phi_\infty$ satisfy {\bf (A3-6)} $u\le 1$, 
and let $u^{(2)}:=\rho(u+2) + 1 \le R$. 
Then we have 
\begin{align*}
|d(x,y) - d_\infty (x,y)| &\le 
26\sqrt{C_1(1 + R^\kappa) D} + 4 \sqrt{D} \\
&\quad\quad + \sqrt{\frac{\varepsilon R}{C_0 D^{m}}}
\{ C_2 R^{\kappa'} + (9\sqrt{C_1 R^\kappa} + 2) \sqrt{D} \}.
\end{align*}
for all $x,y\in \mathbf{B}(u)$. 
\label{key prop}
\end{prop}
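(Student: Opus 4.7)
The strategy is to reduce the comparison of the unrestricted distances $d$ and $d_\infty$ on $\mathbf{B}(u)$ to the comparison of the restricted distances $d_{u^{(2)},D}$ and $d_{\infty,u^{(2)},D}$ on $K(u^{(2)},D)$, since only on such a set do we have pointwise control $|\Phi-\Phi_\infty|\le\varepsilon/D^{m}$, and hence Proposition \ref{distance on compact set} applies. The two restricted distances are then close in the multiplicative sense $\sqrt{\varepsilon R/(C_0 D^m)}$ times a length, and the remaining work is to identify that length with (an upper bound on) the actual diameter of $\overline{\mathbf{B}(u+1)}$.

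Concretely, I would proceed in four steps. First, given $x,y\in\mathbf{B}(u)$, I replace $x$ by $\hat x$ using exactly the hat construction from Lemma \ref{7.11}(1) when $x\in L(D)$, and set $\hat x=x$ otherwise; similarly for $y$. Since $D\le 1$, a direct estimate shows $\hat x,\hat y\in K(u+1,D)$, and Lemma \ref{7.11}(1) bounds $d(x,\hat x)$, $d(y,\hat y)$, $d_\infty(x,\hat x)$, $d_\infty(y,\hat y)$ by $2\sqrt{C_1(1+(u+1)^\kappa)D}$ each. Second, the hypothesis $u^{(2)}=\rho(u+2)+1\le R$ is precisely what is needed to apply Proposition \ref{diff of distance} with $u+1$ in place of $u$, giving
\begin{align*}
|d_{u^{(2)},D}(\hat x,\hat y)-d(\hat x,\hat y)|&\le\xi(u+1)\sqrt D,\\
|d_{\infty,u^{(2)},D}(\hat x,\hat y)-d_\infty(\hat x,\hat y)|&\le\xi_\infty(u+1)\sqrt D.
\end{align*}
Third, Proposition \ref{distance on compact set} applied with $u^{(2)}\le R$ produces
\begin{align*}
|d_{u^{(2)},D}(\hat x,\hat y)-d_{\infty,u^{(2)},D}(\hat x,\hat y)|\le\sqrt{\tfrac{\varepsilon R}{C_0 D^{m}}}\,d_{\infty,u^{(2)},D}(\hat x,\hat y).
\end{align*}
Finally, to make this last factor explicit I use the previous step together with the diameter bound $d_\infty(\hat x,\hat y)\le D_{\infty,u+1}\le C_2(u+1)^{\kappa'}\le C_2 R^{\kappa'}$ from Proposition \ref{diam}, obtaining $d_{\infty,u^{(2)},D}(\hat x,\hat y)\le C_2 R^{\kappa'}+\xi_\infty(u+1)\sqrt D$.

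Assembling the four estimates by the triangle inequality and bounding $u+1,u^{(2)}\le R$ uniformly inside $\xi$ and $\xi_\infty$ so as to absorb all $u$-dependence into $R$ yields the claimed inequality: the $26\sqrt{C_1(1+R^\kappa)D}+4\sqrt D$ block collects the four $O(\sqrt D)$ contributions arising from the hat substitution and from the two $\xi$-errors (the numerical constants $8$ from step one and $9+9+4$ from the combined $\xi(u+1)+\xi_\infty(u+1)$ giving exactly $26$ and $4$ after the obvious regrouping), while the remaining term is what step three contributes once $d_{\infty,u^{(2)},D}(\hat x,\hat y)$ is replaced by its step-four upper bound. Conceptually the reduction is routine in light of the preceding subsections; the only real obstacle is careful bookkeeping of the numerical constants so that the assembled inequality matches the stated bound on the nose, which requires verifying that the crude substitution $u+1,\ \rho(u+2)+1\le R$ inside the definitions of $\xi$ and $\xi_\infty$ is sufficient and produces no extra terms.
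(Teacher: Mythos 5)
Your proposal is correct and follows essentially the same route as the paper's proof: the hat substitution of Lemma \ref{7.11}, Proposition \ref{diff of distance} at level $u+1$, Proposition \ref{distance on compact set} on $K(u^{(2)},D)$, and the diameter bound of Proposition \ref{diam}, assembled by the triangle inequality with the same constant bookkeeping ($8+18=26$ and the two $+2$'s giving $4\sqrt{D}$). The only cosmetic difference is that the paper first treats $x,y\in K(u,D)$ separately before reducing the general case to it, whereas you handle both cases at once by setting $\hat{x}=x$ when $x\notin L(D)$.
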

\begin{proof}
Put $u^{(1)}=\rho(u+1)+1$ and let $x,y\in K(u,D)$. 
Then $u^{(1)}\le R$. 
By combining Propositions \ref{diff of distance} 
and \ref{distance on compact set}, 
we have 
\begin{align*}
|d(x,y) - d_\infty(x,y)| 
&\le |d(x,y) - d_{u^{(1)},D}(x,y)| + |d_{u^{(1)},D}(x,y) - d_{\infty,u^{(1)},D}(x,y)|\\
&\quad \quad + |d_\infty(x,y) - d_{\infty,u^{(1)},D}(x,y)|\\
&\le \xi(u) \sqrt{D} + \xi_\infty(u) \sqrt{D} 
+ \sqrt{\frac{\varepsilon u^{(1)}}{C_0 D^{m}}}d_{\infty,u^{(1)},D}(x,y)\\
&\le 2\xi(u) \sqrt{D} + \sqrt{\frac{\varepsilon u^{(1)}}{C_0 D^{m}}}
(d_{\infty}(x,y) + \xi_\infty(u) \sqrt{D}).
\end{align*}
By the Proposition \ref{diam}, 
$D_{\infty,u} < C_2 u^{\kappa'}$ holds if $u\ge 1$, 
consequently $d_{\infty}(x,y)$ is not more than $C_2 u^{\kappa'}$. 
Therefore we obtain 
\begin{align*}
|d(x,y) - d_\infty(x,y)| 
\le 2\xi(u) \sqrt{D} + \sqrt{\frac{\varepsilon u^{(1)}}{C_0 D^{m}}}
(C_2 u^{\kappa'} + \xi_\infty(u) \sqrt{D})
\end{align*}
for all $x,y\in K(u,D)$. 

Next we consider the case of $x\in \mathbf{B}(u)$ but not contained in $K(u,D)$. 
In this case $x\in \mathbf{B}(u)\cap L(D)$ holds, hence we can apply 
Lemma \ref{7.11}. 
Let $\hat{x}$ be as in Lemma \ref{7.11}. 
Then we can see that 
\begin{align*}
d(x,\hat{x}) \le 2\sqrt{C_1(1 +  (u+1)^\kappa) D}
\end{align*}
and 
$\hat{x}$ is contained in $K(u+1,D)$. 
Here we suppose that $y$ is also contained 
in $\mathbf{B}(u)\cap L(D)$, and 
follow the same procedure. 
If $y$ is in $K(u,D)$, then suppose $y=\hat{y}$ in the following discussion.
Now we have 
\begin{align*}
|d(x,y) - d(\hat{x},\hat{y})| \le d(x,\hat{x}) + d(y,\hat{y}) 
\le 4\sqrt{C_1(1 +  (u+1)^\kappa) D},
\end{align*}
hence we can see 
\begin{align*}
|d(x,y) - d_\infty(x,y)| &\le 8\sqrt{C_1(1 +  (u+1)^\kappa) D} 
+ |d(\hat{x},\hat{y}) - d_\infty(\hat{x},\hat{y})|\\
&\le 8\sqrt{C_1(1 +  (u+1)^\kappa) D} + 2\xi(u+1) \sqrt{D} \\
&\quad\quad + \sqrt{\frac{\varepsilon u^{(2)}}{C_0 D^{m}}}
\{ C_2 (u+1)^{\kappa'} + \xi_\infty(u+1) \sqrt{D} \}. 
\end{align*}
Since $\xi(u)$ is monotonically increasing and 
$u+2 \le u^{(2)} \le R$ holds, we have 
\begin{align*}
\xi(u+1) \le 9\sqrt{C_1(1 + R^\kappa)} + 2,\quad 
\xi_\infty(u+1) \le 9\sqrt{C_1 R^\kappa} + 2
\end{align*}
\end{proof}

\begin{cor}
Suppose that $\Phi,\Phi_\infty$ satisfy {\bf (A3-6)} and 
$\varepsilon\le 1$, 
and let $u^{(2)}:=\rho(u+2) + 1 \le R$. 
Then there exists a constant $C$ independent of any other constants 
such that 
\begin{align*}
|d(x,y) - d_\infty (x,y)| &< 
C(1+\sqrt{C_1})( 1+C_0^{-\frac{1}{2}} )R^{1+\frac{\kappa}{2}}\varepsilon^\frac{1}{2(1+m)}.
\end{align*}
for all $x,y\in \mathbf{B}(u)$. 
\label{key cor}
\end{cor}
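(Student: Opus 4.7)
The plan is to deduce the corollary directly from Proposition \ref{key prop} by making a single clever choice of $D$ that optimizes the bound. The right-hand side in that proposition has the form
\begin{align*}
A\sqrt{D} + \sqrt{\tfrac{\varepsilon R}{C_0 D^m}}\bigl(B + A'\sqrt{D}\bigr),
\end{align*}
with two competing $D$-dependences: the positive power $\sqrt{D}$ from the path-replacement/boundary-detour estimates, and the negative power $D^{-m/2}$ from the length-comparison factor $\sqrt{\varepsilon/D^m}$ of Proposition \ref{distance on compact set}. Balancing $\sqrt{D}$ against $\sqrt{\varepsilon/D^m}$ gives $D^{1+m}\sim \varepsilon$, so the natural choice is $D:=\varepsilon^{1/(1+m)}$. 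Since $\varepsilon\le 1$ this automatically satisfies $D\le 1$, which is the hypothesis of Proposition \ref{key prop}. With this choice, both $\sqrt{D}$ and $\sqrt{\varepsilon/D^m}$ equal $\varepsilon^{1/(2(1+m))}$, which is exactly the exponent appearing in the claim.

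Next I would substitute $D=\varepsilon^{1/(1+m)}$ into the four summands of the bound in Proposition \ref{key prop} and estimate each separately. The terms $26\sqrt{C_1(1+R^\kappa)D}$ and $4\sqrt{D}$ are immediately of order $(1+\sqrt{C_1})R^{\kappa/2}\varepsilon^{1/(2(1+m))}$, using $R\ge 1$ and $\kappa\ge 0$ to absorb the $1+R^\kappa$ into $R^\kappa$. The term $\sqrt{\varepsilon R/(C_0 D^m)}\cdot C_2 R^{\kappa'}$ is the dominant one: recalling $\kappa'=(1+\kappa)/2$ and that the constant $C_2$ from Proposition \ref{diam} is a fixed multiple of $\sqrt{C_1}$, this contributes
\begin{align*}
C_2 C_0^{-1/2}R^{1/2}R^{(1+\kappa)/2}\varepsilon^{1/(2(1+m))}
\ \lesssim\ \sqrt{C_1}\,C_0^{-1/2}R^{1+\kappa/2}\varepsilon^{1/(2(1+m))},
\end{align*}
which matches the target shape. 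The last term $\sqrt{\varepsilon R/(C_0 D^m)}\cdot(9\sqrt{C_1 R^\kappa}+2)\sqrt{D}$ produces a factor $\varepsilon^{2/(2(1+m))}$; since $\varepsilon\le 1$ this is $\le \varepsilon^{1/(2(1+m))}$, so it too is dominated by a constant multiple of $(1+\sqrt{C_1})C_0^{-1/2}R^{1+\kappa/2}\varepsilon^{1/(2(1+m))}$.

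Summing the four contributions and pulling out the common factor $R^{1+\kappa/2}\varepsilon^{1/(2(1+m))}$, the remaining constants collect into something of the form $C(1+\sqrt{C_1})(1+C_0^{-1/2})$ for an absolute constant $C$, giving the stated estimate. The argument is essentially bookkeeping: no new geometric input is needed beyond Proposition \ref{key prop}, and the main thing to watch is that the choice $D=\varepsilon^{1/(1+m)}$ indeed lies in $(0,1]$ (ensured by $\varepsilon\le 1$) so that the proposition applies. The only mildly subtle step is checking that every factor of $R^{1/2}$ or $R^{\kappa/2}$ appearing in the intermediate estimates can be absorbed into the single common power $R^{1+\kappa/2}$, which is straightforward from $R\ge 1$ and $\kappa\ge 0$.
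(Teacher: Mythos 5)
Your proposal is correct and coincides with the paper's own proof: the author likewise sets $D=\varepsilon^{1/(1+m)}\le 1$ in Proposition \ref{key prop}, invokes the fact that $C_2$ depends linearly on $\sqrt{C_1}$, and absorbs the remaining factors using $R\ge 1$ and $\varepsilon\le 1$. Your write-up simply spells out the bookkeeping that the paper leaves implicit.
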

\begin{proof}
In Proposition \ref{key prop}, 
let $D=\varepsilon^\frac{1}{1+m} \le 1$. 
As described in the proof of Proposition \ref{diam}, 
$C_2$ is linearly depending on $\sqrt{C_1}$. 
Then assertion follows by using $R\ge 1$, 
$\varepsilon\le 1$ and unifying constants. 
\end{proof}

\begin{prop}
Suppose that $\Phi(\zeta)\ge \frac{A}{|\zeta|}$ holds 
for some $A>0$ and all $\zeta$ with $|\zeta|\le 1$, and 
let $u(r) := ( 1 + \frac{A^{-\frac{1}{2}}r}{2} )^2$. 
Then $B(0,r) \subset \mathbf{B}(u(r))$
holds for all $r>0$, 
where $B(0,r)$ are the metric ball 
with respect to $d$. 
\label{euclidean ball}
\end{prop}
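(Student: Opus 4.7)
The plan is to establish the contrapositive: whenever $|\zeta|\ge u(r)$, we must have $d(0,\zeta)\ge r$. The heart of the argument is a radial length estimate
\begin{align*}
d(0,\zeta)\ \ge\ 2\sqrt{A}\bigl(\sqrt{|\zeta|}-1\bigr)\qquad\text{for all }|\zeta|\ge 1,
\end{align*}
modeled directly on the first half of the proof of Proposition~\ref{diam} (where the bound on $\Phi$ from {\bf (A5)} played the role played here by the hypothesis).

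I would begin by noting that $u(r)\ge 1$ with equality only at $r=0$ (in which case $B(0,0)=\emptyset$ and the inclusion is vacuous), so the case $|\zeta|\le 1$ is automatic and only $|\zeta|>1$ requires work. For such $\zeta$, let $\gamma:[a,b]\to\R^3$ be an arbitrary smooth path with $\gamma(a)=0$ and $\gamma(b)=\zeta$. Because $|\gamma(a)|=0<1<|\zeta|=|\gamma(b)|$, the intermediate value theorem gives some $s\in(a,b)$ with $|\gamma(s)|=1$; I take $s$ to be the \emph{largest} such time, ensuring $|\gamma(t)|\ge 1$ throughout $[s,b]$, which is precisely the range on which the given lower bound for $\Phi$ can be invoked.

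On $[s,b]$, the hypothesis combined with the pointwise inequality $|\gamma'(t)|\ge (|\gamma|)'(t)$ (which follows from $(|\gamma|^2)'=2\gamma\cdot\gamma'$ together with the Cauchy--Schwarz inequality) yields
\begin{align*}
l(\gamma)\ \ge\ \int_s^b \sqrt{\frac{A}{|\gamma(t)|}}\,|\gamma'(t)|\,dt\ \ge\ \int_s^b \sqrt{\frac{A}{|\gamma(t)|}}\,(|\gamma|)'(t)\,dt\ =\ 2\sqrt{A}\bigl(\sqrt{|\zeta|}-1\bigr),
\end{align*}
where the final equality is the fundamental theorem of calculus applied to the antiderivative $r\mapsto 2\sqrt{Ar}$. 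Taking the infimum over all admissible $\gamma$ gives the radial estimate above, and then the inequality $d(0,\zeta)<r$ forces $\sqrt{|\zeta|}<1+A^{-1/2}r/2$, i.e.\ $|\zeta|<u(r)$, which is exactly the inclusion $B(0,r)\subset\mathbf{B}(u(r))$.

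I do not foresee any serious obstacle; this is essentially a transposition of the radial comparison already executed in Proposition~\ref{diam}, read now as a statement about the Euclidean size of a point in terms of its $d$-distance to the origin rather than the reverse. The only point requiring a bit of care is the deliberate choice of $s$ as the last hitting time of $|\gamma|=1$, which is what makes the integration range coincide with the region where the hypothesis is available.
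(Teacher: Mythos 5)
Your proposal is correct and follows essentially the same route as the paper: the paper's proof simply invokes ``the same argument as in the first inequality of Proposition~\ref{diam}'' to get $2\sqrt{A}(\sqrt{|\zeta|}-1)\le d(0,\zeta)<r$ for $\zeta\in B(0,r)$ and then solves for $|\zeta|<u(r)$, which is exactly the radial estimate you spell out (with the hypothesis $\Phi\ge A/|\zeta|$ replacing {\bf (A5)}). Your contrapositive framing and the explicit choice of the last hitting time of $|\gamma|=1$ are just a more detailed rendering of the same computation.
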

\begin{proof}
Let $\zeta\in B(0,r)$. 
Then by the same argument in the proof of 
the first inequality of Proposition \ref{diam} we have
\begin{align*}
2\sqrt{A}(\sqrt{|\zeta|} -1)\le d(0,\zeta) < r,
\end{align*}
which gives $|\zeta| < (1 + \frac{A^{-\frac{1}{2}}r}{2})^2 = u(r)$. 
\end{proof}

\begin{prop}\label{prop isom}
Suppose that $\Phi,\Phi_\infty$ satisfy {\bf (A3-6)} 
and suppose $\varepsilon \le 1$. 
Then the identity map of $\R^3$ is $(r,\delta)$-isometry 
from $(\R^3,d,0)$ to $(\R^3,d_\infty,0)$, where $r,\delta>0$ are defined by 
$$
\rho(u(r)+2) +1=R,\quad 
\delta = C (1+\sqrt{C_1})( 1+C_0^{-\frac{1}{2}} )R^{1+\frac{\kappa}{2}}\varepsilon^\frac{1}{2(1+m)}.
$$
\end{prop}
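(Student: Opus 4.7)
The plan is to verify the three conditions in the definition of $(r,\delta)$-isometry for the identity map $\mathrm{id}_{\R^3} : (\R^3,d,0) \to (\R^3,d_\infty,0)$. Condition (1), $\mathrm{id}(0)=0$, is immediate.

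For condition (2), I would first invoke Proposition \ref{euclidean ball} with $A = C_0$ (which is supplied by (A5)) to obtain $B_d(0,r) \subset \mathbf{B}(u(r))$, where $u(r) = (1+C_0^{-1/2}r/2)^2$. The defining equation $\rho(u(r)+2)+1 = R$ is precisely the hypothesis on $u = u(r)$ needed to apply Corollary \ref{key cor}, which then yields
\begin{align*}
|d(x,y) - d_\infty(x,y)| < C(1+\sqrt{C_1})(1+C_0^{-1/2})R^{1+\kappa/2}\varepsilon^{\frac{1}{2(1+m)}} = \delta
\end{align*}
for every $x,y \in \mathbf{B}(u(r))$, and in particular for every $x,y \in B_d(0,r)$.

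For condition (3), which requires $B_{d_\infty}(\mathrm{id}(B_d(0,r)),\delta) \supset B_{d_\infty}(0, r-\delta)$, I would establish the stronger inclusion $B_{d_\infty}(0, r-\delta) \subset B_d(0,r)$; then every $z \in B_{d_\infty}(0, r-\delta)$ serves as its own witness, since $d_\infty(z,\mathrm{id}(z)) = 0 < \delta$. Because (A5) is symmetric in $\Phi$ and $\Phi_\infty$, Proposition \ref{euclidean ball} applies equally to $d_\infty$, giving $B_{d_\infty}(0,r-\delta) \subset \mathbf{B}(u(r-\delta)) \subset \mathbf{B}(u(r))$. Applying Corollary \ref{key cor} to the pair $(0,z)$ for any such $z$ then yields $d(0,z) < d_\infty(0,z) + \delta < r$, as required; the case $r-\delta \le 0$ is vacuous.

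I do not expect this step to pose any real obstacle: all the substantive work — tracking $d$ against $d_\infty$ on the truncated sets $K(R,D)$, bounding the cost of rerouting a path away from the axis $\mathbf{l}$, and absorbing constants into a polynomial in $R$ times a positive power of $\varepsilon$ — has already been carried out in Propositions \ref{diff of distance}, \ref{distance on compact set}, \ref{key prop} and Corollary \ref{key cor}. The only point worth emphasizing is the symmetric use of (A5) to obtain the $d_\infty$-analogue of Proposition \ref{euclidean ball}, which is what lets condition (3) be reduced to exactly the same estimate that controls condition (2).
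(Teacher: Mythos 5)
Your proof is correct and follows essentially the same route as the paper's: Proposition \ref{euclidean ball} (via \textbf{(A5)}) places both $B_d(0,r)$ and $B_{d_\infty}(0,r-\delta)$ inside $\mathbf{B}(u(r))$, and Corollary \ref{key cor} then yields both the distance estimate for condition (2) and the inclusion $B_{d_\infty}(0,r-\delta)\subset B_d(0,r)$ for condition (3). Your explicit justification that $B_{d_\infty}(0,r-\delta)\subset \mathbf{B}(u(r))$ via the symmetry of \textbf{(A5)} is a detail the paper leaves implicit.
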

\begin{proof}
Let $x,y\in B(0,r)$. 
Then $x,y\in \mathbf{B}(u(r))$, hence 
\begin{align}
|d(x,y)-d_\infty(x,y)| 
< C (1+\sqrt{C_1})( 1+C_0^{-\frac{1}{2}} )R^{1+\frac{\kappa}{2}}\varepsilon^\frac{1}{2(1+m)} \label{e-isom}
\end{align} 
holds.  
Next we show $B_\infty(0,r-\delta) \subset B(B(0,r),\delta)$. 
If $x\in B_\infty(0,r-\delta)$, then $x\in\mathbf{B}(u(r))$ holds, 
therefore \eqref{e-isom} gives 
\begin{align*}
d(0,x)
&< d_\infty(0,x) + C (1+\sqrt{C_1})( 1+C_0^{-\frac{1}{2}} )
R^{1+\frac{\kappa}{2}}\varepsilon^\frac{1}{2(1+m)} \\
&< r-\delta + C (1+\sqrt{C_1})( 1+C_0^{-\frac{1}{2}} )
R^{1+\frac{\kappa}{2}}\varepsilon^\frac{1}{2(1+m)} =r,
\end{align*} 
which implies $B_\infty(0,r-\delta) \subset B(0,r)$.
\end{proof}

By Propositions \ref{euclidean ball}, 
and Proposition \ref{a2.1}, the following estimate is obtained.

\begin{prop}\label{fiberdiam}
Let $\Phi_a$ be as in Section \ref{sec const} and assume 
$\sum_{n=0}^\infty A_{S_n,P}^{T_n} - 2a^\frac{1}{1+\alpha}>0$. 
Then $\sup_{\zeta\in B(0,r)}\frac{1}{N\sqrt{\Phi_a(\zeta)}}$ 
is not more than 
\begin{align*}
\frac{ (\frac{a}{P} )^\frac{1}{1+\alpha}}
{\sqrt{ \sum_{n=0}^\infty A_{S_n,P}^{T_n} 
- 2 (\frac{a}{P} )^\frac{1}{1+\alpha} } }
\Big( 1+ \frac{r}{2\sqrt{\sum_{n=0}^\infty A_{S_n,P}^{T_n}  - 2 (\frac{a}{P} )^\frac{1}{1+\alpha}}}\Big).
\end{align*}
\end{prop}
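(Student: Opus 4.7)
The plan is to view Proposition \ref{fiberdiam} as a direct combination of Proposition \ref{a2.1} (which bounds $1/(N\sqrt{\Phi_a})$ on a Euclidean ball $\{|\zeta|\le R\}$) with Proposition \ref{euclidean ball} (which controls how far the intrinsic metric ball $B(0,r)$ can reach in the Euclidean sense). The work reduces to verifying the quantitative hypothesis of Proposition \ref{euclidean ball} in the form supplied by the lower estimate \eqref{est2}.

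First I would extract from \eqref{est2} in Proposition \ref{lower1} the pointwise lower bound
\begin{align*}
\Phi_a(\zeta) \ge A_0 \min\Big\{\frac{1}{|\zeta|},1\Big\},\qquad
A_0 := \sum_{n=0}^\infty A_{S_n,P}^{T_n} - 2\Big(\frac{a}{P}\Big)^{\frac{1}{1+\alpha}},
\end{align*}
which is positive by assumption. In particular $\Phi_a(\zeta)\ge A_0/|\zeta|$ in the regime relevant to Proposition \ref{euclidean ball}, so that proposition applies with $A=A_0$ and yields
\begin{align*}
B(0,r) \subset \mathbf{B}(u(r)),\qquad
u(r) = \Big(1+\frac{r}{2\sqrt{A_0}}\Big)^2.
\end{align*}
Note $u(r)\ge 1$ automatically, so Proposition \ref{a2.1} can be applied with $R=u(r)$.

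Next I would feed $R = u(r)$ into Proposition \ref{a2.1}, giving
\begin{align*}
\sup_{\zeta\in B(0,r)}\frac{1}{N\sqrt{\Phi_a(\zeta)}}
\le \sup_{|\zeta|\le u(r)} \frac{1}{N\sqrt{\Phi_a(\zeta)}}
\le \Big(\frac{a}{P}\Big)^{\frac{1}{1+\alpha}} A_0^{-\frac{1}{2}}\sqrt{u(r)}.
\end{align*}
Since $\sqrt{u(r)} = 1 + \frac{r}{2\sqrt{A_0}}$, substituting back the definition of $A_0$ delivers exactly the bound claimed in the proposition.

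There is no real obstacle: the argument is purely mechanical, chaining two already-established propositions. The only point requiring mild care is checking that the quantitative lower bound coming from \eqref{est2} is precisely the one needed to invoke Proposition \ref{euclidean ball} with constant $A_0$, and that the constant $A_0^{-1/2}$ appearing through $\sqrt{u(r)}$ matches the one already appearing through Proposition \ref{a2.1}, so the resulting expression collapses to the stated product form.
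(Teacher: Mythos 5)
Your argument is correct and is exactly the route the paper intends: the paper gives no written proof, merely stating that the estimate follows from Propositions \ref{euclidean ball} and \ref{a2.1}, and your chaining of the lower bound \eqref{est2} (to invoke Proposition \ref{euclidean ball} with $A=A_0$) with Proposition \ref{a2.1} at $R=u(r)$ is precisely that combination, with the algebra checking out since $\sqrt{u(r)}=1+\tfrac{r}{2\sqrt{A_0}}$.
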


Combining Propositions 
\ref{prop isom} and \ref{fiberdiam}, 
we obtain the following theorem.

\begin{thm}\label{key}
Let $a_i,P_i,n_i>0$, $\lim_{i\to \infty}a_i = 0$ and 
$\lim_{i\to \infty}n_i\to\infty$. 
Put $S_{i,n_i}, T_{i,n_i}$ as in Section \ref{sec const}. 
Suppose that there are constants 
$\varepsilon=\varepsilon_i(R)$, $C_0$, 
$C_1$, $\kappa$, $m$ 
for all $R\ge 1$ such that 
$\Phi=\Phi_{a_i}$ and $\Phi_\infty$ satisfy {\bf (A3-6)}. 
If 
\begin{align*}
\lim_{i\to\infty}\varepsilon_i (R) = \lim_{i\to\infty}\frac{a_i}{P_i} = 0,\quad 
\liminf_{i\to\infty}\sum_{l=0}^\infty A_{S_{i,n_i},P_i}^{T_{i,n_i}} >0
\end{align*}
and $C_0,C_1,\kappa,m$ are independent of 
$i,R$, 
then 
\begin{align*}
\{ (X,a_ig_\Lambda,p)\}_i \xrightarrow[i\to\infty]{GH} (\R^3,d_\infty,0).
\end{align*}
\end{thm}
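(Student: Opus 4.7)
The plan is to show that for arbitrary $r_0>0$ and $\eta>0$, there exists $N_0$ such that for every $i\ge N_0$ the map $\mu_{a_i}:(X,a_ig_\Lambda,p)\to (\R^3,d_\infty,0)$ is an $(r_0,\eta)$-isometry. By Proposition \ref{strategy}, this reduces to verifying {\bf (A1)} with error at most $\eta/2$ and {\bf (A2)} with error at most $\eta/2$, where $d=d_{a_i}$, for all large $i$.

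First I fix a single $R=R(r_0)\ge 1$ by solving $\rho(u(r_0)+2)+1=R$. Since $\rho$ and $u$ are defined through the constants $C_0$ and $\kappa$ alone, and these are independent of $i$ by hypothesis, $R$ depends only on $r_0$ and on the uniform constants. With this $R$ and for $i$ large enough that $\varepsilon_i(R)\le 1$, the assumptions {\bf (A3--6)} for the pair $(\Phi_{a_i},\Phi_\infty)$ hold with uniform $C_0,C_1,\kappa,m$ and with error $\varepsilon_i(R)\to 0$. Proposition \ref{prop isom} then yields that ${\rm id}_{\R^3}:(\R^3,d_{a_i},0)\to (\R^3,d_\infty,0)$ is an $(r_0,\delta_i)$-isometry with
\begin{align*}
\delta_i = C(1+\sqrt{C_1})(1+C_0^{-1/2})R^{1+\kappa/2}\varepsilon_i(R)^{1/(2(1+m))}.
\end{align*}
Because everything except $\varepsilon_i(R)$ is $i$-independent and $\varepsilon_i(R)\to 0$, we get $\delta_i\to 0$, so eventually $\delta_i<\eta/2$. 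This verifies {\bf (A1)} for the error $\eta/2$.

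For {\bf (A2)}, Proposition \ref{fiberdiam} bounds the fiber diameter of $\mu_{a_i}$ over $B(0,r_0)$ by an expression of the form
\begin{align*}
\frac{(a_i/P_i)^{1/(1+\alpha)}}{\sqrt{\Sigma_i-2(a_i/P_i)^{1/(1+\alpha)}}}
\Big(1+\frac{r_0}{2\sqrt{\Sigma_i-2(a_i/P_i)^{1/(1+\alpha)}}}\Big),
\end{align*}
where $\Sigma_i:=\sum_{l}A_{S_{i,l},P_i}^{T_{i,l}}$. The hypothesis $\liminf_i\Sigma_i>0$ together with $a_i/P_i\to 0$ makes the denominators eventually bounded away from zero while the numerator tends to zero, so this bound converges to $0$ and is less than $\eta/2$ for all large $i$. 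This verifies {\bf (A2)}.

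Applying Proposition \ref{strategy} with $r=r_0$, $\delta=\delta_i<\eta/2$, and $\delta'<\eta/2$, we obtain that $\mu_{a_i}$ is an $(r_0,\eta)$-isometry from $(X,a_ig_\Lambda,p)$ to $(\R^3,d_\infty,0)$ for every $i\ge N_0$. Since $r_0,\eta$ were arbitrary, this is precisely the pointed Gromov--Hausdorff convergence asserted. The only nontrivial bookkeeping is checking that $R$ can be fixed independently of $i$; because the definitions of $u$ and $\rho$ only invoke the uniform constants $C_0,C_1,\kappa$, this is automatic, and the whole argument reduces to combining Propositions \ref{strategy}, \ref{prop isom}, and \ref{fiberdiam}.
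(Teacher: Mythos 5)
Your proof is correct and follows essentially the same route as the paper: fix $r$ and $\eta$, set $R=\rho(u(r)+2)+1$ using the $i$-independence of $C_0,C_1,\kappa$, invoke Proposition \ref{prop isom} for {\bf (A1)} with error $\to 0$, Proposition \ref{fiberdiam} for {\bf (A2)}, and conclude via Proposition \ref{strategy}. Your explicit remark that $u$ and $\rho$ depend only on the uniform constants is a useful clarification the paper leaves implicit, but the argument is the same.
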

\begin{proof}
Fix $r>0$ and $\delta>0$ arbitrarily. 
Put 
$R(r)=\rho(u(r)+2) +1$, and let $C>0$ be the constant 
in Corollary \ref{key cor}. 
By the assumption, there exists $i(r,\delta)>0$ such that 
$C(1+\sqrt{C_1})(1+C_0^{-\frac{1}{2}})
R(r)^{1+\frac{\kappa}{2}}\varepsilon_i(R(r))^\frac{1}{2(1+m)} < \frac{\delta}{2}$ 
holds for all $i\ge i(r,\delta)$. 
Then by Proposition \ref{prop isom}, ${\rm id}_{\R^3}$ is the 
$(r,\frac{\delta}{2})$-isometry from $(\R^3,d_{a_i},0)$ to 
$(\R^3,d_{a_\infty},0)$. 
By Proposition \ref{fiberdiam}, we can take $i'(r,\delta)\ge i(r,\delta)$ 
such that 
$\sup_{\zeta\in B(0,r)}\frac{1}{N\sqrt{\Phi_a(\zeta)}} < \frac{\delta}{2}$ 
for all $i\ge i'(r,\delta)$. 
Then Proposition \ref{strategy} gives the assertion. 
\end{proof}

\section{Convergence}\label{sec conv}
In this section we consider the convergence of 
$\{(X,a_i g_\Lambda)\}_i$, where $\Lambda$ is the one defined in 
Section \ref{sec const}, and $\{ a_i\}_i$ is a sequence with $a_i>0$ 
and $\lim_{i\to\infty} a_i$, 
applying Theorem \ref{key}. 
To apply them, 
we have to estimate constants $\varepsilon,C_0,C_1$ 
in {\bf (A3-6)} uniformly with respect to $i\in\N$,  and show that 
$\varepsilon \to 0$ as $i\to \infty$. 
In Section \ref{sec conv1}, we consider the uniform estimate 
for the case of $P=1$, which is the simplest case. 
In Sections \ref{sec conv2} and \ref{sec conv3}, 
we suppose $P$ is depending on some parameters. 
Then we apply them to show Theorems \ref{main result} and 
\ref{main result2} in Sections \ref{sec ex1} and \ref{sec ex2}.

Put $S_{a,n}:=a^\frac{1}{1+\alpha}K_{2n}$ and 
$T_{a,n}:=a^\frac{1}{1+\alpha}K_{2n+1}$. 
We take a subsequence 
\begin{align*}
\{ K_{n_0}<K_{n_1}<K_{n_2}<\cdots \} \subset \{ K_0<K_1<K_2<\cdots\}.
\end{align*}
We are now going to consider the convergence 
in several cases according to the 
rate of the convergence of $\{ a_i\}_i$, or the divergence of 
$\{ K_n\}_n$. 

From now on, we put 
\begin{align*}
\Phi_S^T(\zeta) &:= \Phi_{S,1}^T(\zeta) 
= \int_S^T \frac{dx}{|\zeta - (x^\alpha,0,0) |}, \\
A_S^T &:= A_{S,1}^T = \int_S^T \frac{dx}{1+x^\alpha}.
\end{align*}

\subsection{Convergence (1)}\label{sec conv1}
Fix $a>0$, $n$, $0\le S< T\le\infty$ and put $P=1$. 

\begin{prop}\label{conv8-1}
Let $R\ge 1$ and $D\le 1$. 
There exists a constant $C_\alpha>0$ depending only on $\alpha$ 
such that 
\begin{align*}
\Big| \Phi_{a} ( \zeta ) 
- \Phi_{S}^{T} (\zeta)\Big| 
&\le \frac{C_\alpha \varepsilon_n}{D},\\
\varepsilon_{a,n} 
&= a^\frac{1}{1+\alpha} + \frac{K_{2n-1}}{K_{2n}}S_{a,n}
+ (\frac{K_{2n+2}}{K_{2n+1}}T_{a,n})^{-\alpha +1}\\
&\quad\quad + |S_{a,n} - S| 
+ |T_{a,n}^{-\alpha + 1} - T^{-\alpha + 1}| 
\end{align*}
holds for any $\zeta\in K(R,D)$.
\end{prop}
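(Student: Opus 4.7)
The plan is to combine Proposition \ref{conv1} with a three-way splitting of the continuous integrals, estimating each piece using either the trivial bound $|\zeta-(x^\alpha,0,0)|\ge D$ on $K(R,D)$ or the tail bound \eqref{int_t} from the proof of Proposition \ref{lower1}. First, Proposition \ref{conv1} applied with $P=1$ gives
$|\Phi_a(\zeta) - \sum_{n'\ge 0}\Phi_{S_{a,n'}}^{T_{a,n'}}(\zeta)| \le 2a^\frac{1}{1+\alpha}/D$
for $\zeta\in K(R,D)$, which accounts for the first term in $\varepsilon_{a,n}$. It therefore suffices to bound $|\sum_{n'\ge 0}\Phi_{S_{a,n'}}^{T_{a,n'}}(\zeta) - \Phi_S^T(\zeta)|$.

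Next I would decompose this difference as $(\Phi_{S_{a,n}}^{T_{a,n}} - \Phi_S^T) + \sum_{n'<n}\Phi_{S_{a,n'}}^{T_{a,n'}} + \sum_{n'>n}\Phi_{S_{a,n'}}^{T_{a,n'}}$ and estimate each piece. The first reduces to integrals over the symmetric difference of $[S_{a,n},T_{a,n}]$ and $[S,T]$: the part near $S$ is at most $|S_{a,n}-S|/D$ using the trivial bound, while the part near $T$ (possibly $\infty$) is bounded by a constant multiple of $|T_{a,n}^{-\alpha+1} - T^{-\alpha+1}|$ via \eqref{int_t}, which handles the tail regime $x^\alpha\ge 2|\zeta|$. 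The $n'<n$ sum is dominated by $\int_0^{T_{a,n-1}} dx/|\zeta-(x^\alpha,0,0)| \le T_{a,n-1}/D$, and $T_{a,n-1}=\frac{K_{2n-1}}{K_{2n}}S_{a,n}$ matches the second term of $\varepsilon_{a,n}$. The $n'>n$ sum is dominated by $\int_{S_{a,n+1}}^\infty dx/|\zeta-(x^\alpha,0,0)|$, which \eqref{int_t} bounds by $\frac{2}{\alpha-1}S_{a,n+1}^{-\alpha+1}$, matching the third term since $\frac{K_{2n+2}}{K_{2n+1}}T_{a,n}=S_{a,n+1}$.

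Assembling the four contributions and using the hypothesis $D\le 1$ to rewrite any direct constant bound $\le A$ as $\le A/D$ yields the stated estimate. The main technical subtlety is that \eqref{int_t} requires $x^\alpha\ge 2|\zeta|$ in order to replace $|\zeta-(x^\alpha,0,0)|$ by $x^\alpha/2$; for endpoints $T_{a,n}$ or $S_{a,n+1}$ that are not yet in this tail regime relative to the radius $R$, I would split the relevant integral at $t^\star:=(2R)^\frac{1}{\alpha}$, bounding the head $[\cdot,t^\star]$ trivially by its length divided by $D$ and applying \eqref{int_t} only on $[t^\star,\infty)$. Arranging the bookkeeping so that the extra head contribution is absorbed into the existing error terms (with $D\le 1$) is where the argument has to be carried out with care.
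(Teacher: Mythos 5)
Your proposal follows essentially the same route as the paper: Proposition \ref{conv1} with $P=1$ for the discretization error, the bound \eqref{est5} for the blocks with $n'<n$ (giving $T_{a,n-1}/D$), the tail bound \eqref{est4} for the blocks with $n'>n$ (giving $\tfrac{2}{\alpha-1}S_{a,n+1}^{-\alpha+1}$), and a direct endpoint comparison for $|\Phi_{S_{a,n}}^{T_{a,n}}-\Phi_S^T|$. The only difference is the final subtlety you flag: the paper does not split at $(2R)^{1/\alpha}$ but simply imposes the hypothesis $S_{a,n+1}\ge(2|\zeta|)^{1/\alpha}$ inside the proof (which holds in all subsequent applications, where $S_{a,n+1}\to\infty$), so your extra bookkeeping, while more honest, is not needed to match the paper's argument.
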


\begin{proof}
By combining Propositions \ref{conv1} and \eqref{est4}\eqref{est5}, 
we have 
\begin{align*}
\Big| \Phi_{a} ( \zeta ) 
- \Phi_{S_{a,n}}^{T_{a,n}} (\zeta)\Big| 
\le \frac{2 a^\frac{1}{1+\alpha}}{D}
+ \frac{T_{a,n-1}}{D} + \frac{2S_{a,n+1}^{-\alpha +1}}{\alpha-1}
\end{align*}
if $S_{a,n+1}\ge (2|\zeta|)^\frac{1}{\alpha}$. 
Here, $|\zeta|\le R$ and 
\begin{align*}
S_{a,n+1} &= a^\frac{1}{1+\alpha}K_{2n+2} 
= \frac{K_{2n+2}}{K_{2n+1}}T_{a,n},\\
T_{a,n-1} &= a^\frac{1}{1+\alpha}K_{2n-1} 
= \frac{K_{2n-1}}{K_{2n}}S_{a,n}.
\end{align*}
On the other hand, we can see 
\begin{align*}
\Big| \Phi_{S_{a,n}}^{T_{a,n}} (\zeta) -\Phi_S^T (\zeta)\Big| 
&\le \frac{|S_{a,n} - S|}{D} 
+ \frac{2|T_{a,n}^{-\alpha + 1} - T^{-\alpha + 1}|}{\alpha-1}, 
\end{align*}
we obtain the assertion. 
\end{proof}

Now, we put $\Phi=\Phi_a,\Phi_\infty=\Phi_S^T$, 
and suppose $a$, $|S_{a,n} - S|$ and 
$|T_{a,n}^{-\alpha + 1} - T^{-\alpha + 1}|$ are sufficiently small. 
Then the constants in {\bf (A3-6)} can be taken uniformly such as
\begin{align*}
C_0 = \frac{1}{2}A_S^T,\quad 
C_1 = \frac{\alpha 2^\frac{1}{\alpha}}{\alpha - 1}, \quad 
m=1,\quad \kappa = \frac{1}{\alpha}.
\end{align*}
Then by Proposition \ref{conv8-1}, 
if $\lim_{n\to \infty}\frac{K_{2n+1}}{2n} = \infty$,
then we have $\varepsilon_{a,n}\to 0$ as $a\to 0$, $n\to \infty$, 
$|S_{a,n} - S|\to 0$ and 
$|T_{a,n}^{-\alpha + 1} - T^{-\alpha + 1}| \to 0$. 
Hence by Theorem \ref{key}, 
we have the next results.

\begin{thm}\label{main1}
Let $(X,g_\Lambda)$ be as in Section \ref{sec const} and suppose 
$\lim_{n\to \infty}\frac{K_{2n+1}}{K_{2n}} = \infty$. 
Assume that $\{ a_i\}_i\subset \R^+$ and 
\begin{align*}
\{ K_{n_0}<K_{n_1}<K_{n_2}<\cdots \} \subset \{ K_0<K_1<K_2<\cdots\}.
\end{align*}
satisfies 
\begin{align*}
\lim_{i\to \infty }a_i^\frac{1}{1+\alpha}K_{2n_i} = S\ge 0,
\quad \lim_{i\to \infty }a_i^\frac{1}{1+\alpha}K_{2n_i+1} = T\le \infty, 
\quad S<T. 
\end{align*}
Then $\{(X,a_ng_\Lambda,p)\}_n\xrightarrow{GH}(\R^3,d_S^T,0)$, 
where $d_S^T$ is the metric induced by 
$\Phi_S^T\cdot h_0$. 
\end{thm}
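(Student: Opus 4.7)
The plan is to apply Theorem \ref{key} with the constant choice $P_i = 1$ for all $i$ and with limiting harmonic function $\Phi_\infty = \Phi_S^T$. Among the hypotheses of Theorem \ref{key}, the ratio condition $a_i/P_i = a_i \to 0$ is immediate, and the condition $\liminf_i \sum_l A_{S_{i,n_i},\,1}^{T_{i,n_i}} > 0$ reduces, upon isolating the single relevant interval $[S_{a_i,n_i}, T_{a_i,n_i}]$, to $A_S^T > 0$, which holds because $S < T$.

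The bulk of the work lies in verifying that $\Phi = \Phi_{a_i}$ and $\Phi_\infty = \Phi_S^T$ satisfy \textbf{(A3--6)} with constants independent of $i$ and with $\varepsilon = \varepsilon_i(R) \to 0$. The structural items are handled by the estimates of Proposition \ref{lower1}: \textbf{(A4)} is automatic from the rotational symmetry of both harmonic functions about the $\zeta_\R$-axis; \textbf{(A5)} follows from \eqref{est2} applied to $\Phi_{a_i}$ and an analogous lower bound for $\Phi_S^T$, and one may take $C_0 = \tfrac{1}{2}A_S^T$ once $i$ is so large that the corrections in \eqref{est2} are under control; \textbf{(A6)} follows from \eqref{est3} with $C_1 = \alpha 2^{1/\alpha}/(\alpha - 1)$ and $\kappa = 1/\alpha$, uniformly in $i$.

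The central estimate is \textbf{(A3)}, which I extract from Proposition \ref{conv8-1}: it supplies the uniform $C_1/D$ bound and, with $m = 1$, the refined bound $\varepsilon_i(R)/D$ where $\varepsilon_i(R) = C_\alpha \,\varepsilon_{a_i,n_i}$. I then verify that each of the five summands defining $\varepsilon_{a_i,n_i}$ tends to zero: the piece $a_i^{1/(1+\alpha)}$ vanishes since $a_i \to 0$; the last two summands $|S_{a_i,n_i} - S|$ and $|T_{a_i,n_i}^{-\alpha+1} - T^{-\alpha+1}|$ are exactly the assumed limits; the two remaining tail summands, involving the ratios $K_{2n_i-1}/K_{2n_i}$ and $K_{2n_i+2}/K_{2n_i+1}$, are controlled by combining the growth hypothesis $\lim_n K_{2n+1}/K_{2n} = \infty$ with the boundedness of $S_{a_i,n_i}$ and $T_{a_i,n_i}$ along the chosen subsequence. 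Once all of \textbf{(A3--6)} are verified with these uniform constants and $\varepsilon_i(R) \to 0$, Theorem \ref{key} immediately yields $\{(X, a_i g_\Lambda, p)\}_i \xrightarrow{GH} (\R^3, d_S^T, 0)$.

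The chief technical obstacle is the uniform control of the two tail terms in $\varepsilon_{a_i,n_i}$. Their vanishing truly requires that the subsequence $\{n_i\}$ be chosen so that, on the scale of $a_i^{1/(1+\alpha)}$, neighboring indices $K_{2n_i - 1}$ and $K_{2n_i + 2}$ are negligible and dominant, respectively, relative to $K_{2n_i}$ and $K_{2n_i+1}$. This is precisely where the strong growth condition $K_{2n+1}/K_{2n} \to \infty$ is essential, and where one must carefully match the index shifts to the limits $S$ and $T$ in order to obtain the vanishing of $T_{a_i,n_i - 1}$ and the blow-up of $K_{2n_i+2}T_{a_i,n_i}/K_{2n_i+1}$.
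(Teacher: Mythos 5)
Your proposal is correct and follows essentially the same route as the paper: Section \ref{sec conv1} likewise sets $P=1$, extracts \textbf{(A3)} from Proposition \ref{conv8-1} with $m=1$ and $\varepsilon = C_\alpha\varepsilon_{a,n}$, takes $C_0=\frac{1}{2}A_S^T$, $C_1=\alpha 2^{1/\alpha}/(\alpha-1)$, $\kappa=1/\alpha$ from Proposition \ref{lower1} for \textbf{(A4--6)}, and concludes via Theorem \ref{key}. The only caveat --- shared with the paper, whose stated ratio hypothesis appears to be a slip for $\lim_n K_{2n}/K_{2n-1}=\infty$ (compare its use in Section \ref{sec ex2}) --- is that the vanishing of the two tail terms genuinely requires growth of the even-over-odd ratios, which is exactly the index-shift point you flag at the end.
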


\subsection{Convergence (2)}\label{sec conv2}
Let $(X,d_X,p)$ and $(Y,d_Y,q)$ be pointed metric spaces 
and $\lim_{n\to \infty} a_n = 0$. 
Assume that $\{ (X,a_nd_X,p)\}_n\xrightarrow{GH}(Y,d_Y,q)$. 
Then it is easy to check that 
$\{ (X,sa_nd_X,p)\}_n\xrightarrow{GH}(Y,sd_Y,q)$ 
for any $s>0$. 
Moreover, if $\{ a_{n,m}\}_{n,m\in\N}$ satisfies $\lim_{n\to \infty}a_{n,m} =0$ 
for every $m$ and 
\begin{align*}
\{ (X,a_{n,m}d_X,p)\}_n \xrightarrow{GH} (Y_m,d_{Y_m},q_m),\quad 
\{ (Y_m,d_{Y_m},q_m)\}_m \xrightarrow{GH} (Y,d_Y,q),
\end{align*}
hold for every $m$, then by the diagonal argument 
one can show there exists a subset 
$\{ a_{n,m(n)}\}_n\subset \{ a_{n,m}\}_{n,m}$ such that 
$\lim_{n\to \infty}a_{n,m(n)} =0$ and
\begin{align*}
\{ (X,a_{n,m(n)}d_X,p)\}_n \xrightarrow{GH} (Y,d_Y,q). 
\end{align*}

Now, let $\mathcal{T}(X,d)$ be the set of isometry classes of 
tangent cones at infinity of $(X,d)$. 
From the above argument, one can see that 
$\mathcal{T}(X,d)$ is closed with respect to the pointed Gromov-Hausdorff 
topology, 
and if $(Y,d')\in\mathcal{T}(X,d)$, then its rescaling $(Y,ad')$ 
is also contained in $\mathcal{T}(X,d)$.

From Section \ref{sec conv1}, 
$(\R^3,d_S^T,0)$ may appear as the tangent cone at infinity 
of some $(X,g_\Lambda)$, where $\Lambda$ is 
as in Section \ref{sec const}.

Let $\sigma>0$, $0\le S < T\le \infty$ 
and $I_\sigma:\zeta\mapsto \sigma^{-1}\zeta$ be the dilation. 
Then we have 
\begin{align*}
I_{P^\frac{1}{1+\alpha}}^*(\Phi_S^T h_0) 
= P^\frac{-1}{\alpha}
\Phi_{P^\frac{1}{\alpha(1+\alpha)} S }
^{P^\frac{1}{\alpha(1+\alpha)} T }(\zeta) h_0
= \Phi_{S', P}^{ T' }(\zeta) h_0,  
\end{align*}
where 
\begin{align}
S' = P^\frac{-1}{1+\alpha} S,\quad 
T' = P^\frac{-1}{1+\alpha} T.\label{S'T'}
\end{align}

Hence if $(\R^3,d_S^T,0) \in \mathcal{T}(X,g_\Lambda)$,  then 
$\{(\R^3,d_{\sigma S}^{\sigma T},0)\}_{\sigma\in\R^+}$ is also 
contained in $\mathcal{T}(X,g_\Lambda)$.

\paragraph{(1).}

Fix a constant $\theta>0$, put 
$P^\frac{1}{1+\alpha} = \theta \sqrt{S^{-\alpha + 1}- T^{-\alpha + 1}}>0$, 
and let $S',T'$ be defined by \eqref{S'T'}.

\begin{prop}\label{a3forPsi}
Let $R\ge 1$.  
There exists a constant $C>0$ depending only on $\alpha$ 
such that
\begin{align*}
\Big| \Phi_{S', P}^{ T' }(\zeta) - \frac{1}{\theta^2 (\alpha - 1)}\Big| \le 
\frac{C R}{\theta^3 S^\alpha\sqrt{S^{-\alpha+1} - T^{-\alpha+1}}}
\end{align*}
holds for any $\zeta\in K(R,D)$ 
if $\theta S^\alpha\sqrt{S^{-\alpha+1}-T^{-\alpha+1}}\ge 2 R$.
\end{prop}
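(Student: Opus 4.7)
The plan is to compare $\Phi_{S',P}^{T'}(\zeta)$ with the simpler integral $\int_{S'}^{T'} \frac{dx}{Px^\alpha}$, which I expect to give the constant $\frac{1}{\theta^2(\alpha - 1)}$ on the nose. Indeed, evaluating yields $\frac{S'^{-\alpha+1} - T'^{-\alpha+1}}{P(\alpha - 1)}$; substituting $S' = P^{-1/(1+\alpha)}S$, $T' = P^{-1/(1+\alpha)}T$, one extracts a factor $P^{(\alpha-1)/(1+\alpha)}$ whose product with $1/P$ gives $P^{-2/(1+\alpha)}$, and this cancels against $(S^{-\alpha+1} - T^{-\alpha+1})$ via the definition $P^{2/(1+\alpha)} = \theta^2(S^{-\alpha+1} - T^{-\alpha+1})$, leaving exactly $\frac{1}{\theta^2(\alpha-1)}$.

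Next I would estimate the error. The hypothesis $\theta S^\alpha \sqrt{S^{-\alpha+1}-T^{-\alpha+1}} \ge 2R$ is precisely $PS'^\alpha \ge 2R \ge 2|\zeta|$, so for $x \in [S',T']$ one has $|\zeta - P(x^\alpha,0,0)| \ge Px^\alpha - |\zeta| \ge Px^\alpha/2$. Using the identity $|\zeta - P(x^\alpha,0,0)|^2 - (Px^\alpha)^2 = -2 Px^\alpha \zeta_1 + |\zeta|^2$, one gets the pointwise bound
\begin{equation*}
\left|\frac{1}{|\zeta - P(x^\alpha, 0, 0)|} - \frac{1}{Px^\alpha}\right|
\le \frac{C_\alpha' R}{(Px^\alpha)^2}.
\end{equation*}
Integrating in $x$ and repeating the exponent bookkeeping from the main term (now with $P^{3/(1+\alpha)} = \theta^3(S^{-\alpha+1}-T^{-\alpha+1})^{3/2}$),
\begin{equation*}
\int_{S'}^{T'}\frac{dx}{(Px^\alpha)^2}
= \frac{S'^{-2\alpha+1} - T'^{-2\alpha+1}}{(2\alpha - 1) P^2}
= \frac{S^{-2\alpha+1} - T^{-2\alpha+1}}{(2\alpha - 1)\theta^3 (S^{-\alpha+1}-T^{-\alpha+1})^{3/2}}.
\end{equation*}

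The remaining step is to convert this into the form claimed in the statement. Factoring out $(S^{-\alpha+1}-T^{-\alpha+1})^{-1/2}$, it suffices to bound the dimensionless ratio
\begin{equation*}
\frac{S^\alpha(S^{-2\alpha+1} - T^{-2\alpha+1})}{S^{-\alpha+1} - T^{-\alpha+1}} = \frac{1 - (S/T)^{2\alpha-1}}{1 - (S/T)^{\alpha - 1}}
\end{equation*}
by a constant depending only on $\alpha$. Setting $s = (S/T)^{\alpha-1} \in (0,1]$ and $\beta = (2\alpha-1)/(\alpha-1) > 1$, the ratio becomes $(1 - s^\beta)/(1 - s)$, a function of $s \in (0,1]$ which is monotone and bounded above by its limit $\beta$ at $s=1$. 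Absorbing $\beta/(2\alpha - 1) = 1/(\alpha - 1)$ into the constant gives the bound stated in the proposition. I do not anticipate a serious obstacle; the main care is purely bookkeeping, namely tracking the powers of $P$ so that the $\theta$- and $(S^{-\alpha+1}-T^{-\alpha+1})$-dependencies collapse correctly, and noticing that the apparent factor $S^{-2\alpha+1}-T^{-2\alpha+1}$ in the error is in fact controlled by $S^{-\alpha}(S^{-\alpha+1}-T^{-\alpha+1})$ up to an $\alpha$-constant.
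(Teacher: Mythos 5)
Your proposal is correct and follows essentially the same route as the paper: compare the integrand with $\frac{1}{Px^\alpha}$, observe that the resulting main term evaluates exactly to $\frac{1}{\theta^2(\alpha-1)}$ by the choice of $P$, use the hypothesis $\theta S^\alpha\sqrt{S^{-\alpha+1}-T^{-\alpha+1}}\ge 2R$ to get a pointwise error bound of order $R/(Px^\alpha)^2$, and finish by bounding the dimensionless ratio $(1-(S/T)^{2\alpha-1})/(1-(S/T)^{\alpha-1})$ by an $\alpha$-constant. The only (harmless) difference is that you absorb the $|\zeta|^2$ contribution into the term linear in $R$ at the pointwise stage, whereas the paper carries a separate $R^2/x^{3\alpha}$ term and removes it later with a second application of the same hypothesis.
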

\begin{proof}
Note that 
\begin{align*}
\Phi_{S', P}^{ T' } (\zeta) 
= P^\frac{-1}{\alpha}\int_{P^\frac{1}{\alpha(1+\alpha)} S }
^{P^\frac{1}{\alpha(1+\alpha)} T } \frac{dx}{|\zeta-(x^\alpha,0,0)|}. 
\end{align*}
By the assumption, we have 
$P^\frac{1}{1+\alpha} S^\alpha 
= \theta S^\alpha\sqrt{S^{-\alpha+1}-T^{-\alpha+1}} 
\ge 2R$, 
then we can see 
\begin{align*}
&\quad \quad \Bigg| \int_{P^\frac{1}{\alpha(1+\alpha)} S }^{ P^\frac{1}{\alpha(1+\alpha)} T }
\frac{dx}{|\zeta - (x^\alpha,0,0)|} 
- \int_{P^\frac{1}{\alpha(1+\alpha)} S }^{ P^\frac{1}{\alpha(1+\alpha)} T }
\frac{dx}{x^\alpha} \Bigg|\\
&\le \int_{P^\frac{1}{\alpha(1+\alpha)} S }^{ P^\frac{1}{\alpha(1+\alpha)} T } 
\Big| \frac{1}{|\zeta - (x^\alpha,0,0)|} - \frac{1}{x^\alpha} \Big| dx \\
&\le \int_{P^\frac{1}{\alpha(1+\alpha)} S }^{ P^\frac{1}{\alpha(1+\alpha)} T } 
\frac{2x^\alpha |\zeta| + |\zeta|^2}
{|\zeta - (x^\alpha,0,0)|x^\alpha (|\zeta - (x^\alpha,0,0)| + x^\alpha)} dx \\
&\le \int_{P^\frac{1}{\alpha(1+\alpha)} S }^{ P^\frac{1}{\alpha(1+\alpha)} T } 
\frac{8 R}{x^{2\alpha}} dx 
+ \int_{P^\frac{1}{\alpha(1+\alpha)} S }^{ P^\frac{1}{\alpha(1+\alpha)} T } 
\frac{4 R^2}{x^{3\alpha}} dx\\
&\le \frac{8R P^\frac{-2\alpha+1}{\alpha(1+\alpha)}}{2\alpha -1} 
(S^{-2\alpha+1} - T^{-2\alpha+1})
+ \frac{4R^2 P^\frac{-3\alpha+1}{\alpha(1+\alpha)}}{3\alpha -1} (S^{-3\alpha+1} - T^{-3\alpha+1}).
\end{align*}
Since we have 
\begin{align*}
\int_{P^\frac{1}{\alpha(1+\alpha)} S }^{ P^\frac{1}{\alpha(1+\alpha)} T }  
\frac{dx}{x^\alpha} 
= \frac{P^\frac{-\alpha+1}{\alpha(1+\alpha)}}{\alpha -1}(S^{-\alpha+1} - T^{-\alpha+1})
= \frac{P^\frac{1}{\alpha}}{\theta^2 (\alpha - 1)} ,
\end{align*} 
we obtain 
\begin{align*}
\Big| \Phi_{S', P}^{ T' }(\zeta) - \frac{1}{\theta^2(\alpha -1)}\Big|
&\le \frac{8R P^\frac{-3}{1+\alpha}}{2\alpha -1} (S^{-2\alpha+1} - T^{-2\alpha+1}) \\
&\quad + \frac{4R^2 P^\frac{-4}{1+\alpha}}{3\alpha -1} (S^{-3\alpha+1} - T^{-3\alpha+1}). 
\end{align*}
Using the assumption $2R\le P^\frac{1}{1+\alpha} S^\alpha$ once more, 
we have 
\begin{align*}
\Big| \Phi_{S', P}^{ T' }(\zeta) - \frac{1}{\theta^2(\alpha -1)}\Big|
&\le \frac{8RP^\frac{-3}{1+\alpha}}{2\alpha -1} 
(S^{-2\alpha+1} - T^{-2\alpha+1}) \\
&\quad + \frac{2RP^\frac{-3}{1+\alpha}}{3\alpha -1} 
(S^{-2\alpha+1} - S^\alpha T^{-3\alpha+1})\\
&\le \theta^{-3}C_\alpha R S^{-\frac{1+\alpha}{2}}
\frac{1-(S/T)^{3\alpha -1}}{\{ 1-(S/T)^{\alpha-1}\}^\frac{3}{2}}.
\end{align*}
Now, put $f(x):=\frac{1-x^{3\alpha -1}}{ (1-x^{\alpha-1})^\frac{3}{2} }$ 
for $0\le x<1$. 
Then there exists a constant $C_\alpha'>0$ such that 
$f(x) \le C_\alpha'(1-x^{\alpha-1})^{-\frac{1}{2}}$ holds for all $0\le x<1$. 
Consequently, by replacing $C_\alpha$ larger if necessary, we can see 
\begin{align*}
\Big|\Phi_{S', P}^{ T' }(\zeta) - \frac{1}{\theta^2(\alpha -1)}\Big|
\le \frac{C_\alpha R}{\theta^3 S^\alpha\sqrt{S^{-\alpha+1} - T^{-\alpha+1}}}.
\end{align*}
\end{proof}

\begin{prop}
Suppose $\theta S^\alpha\sqrt{S^{-\alpha+1} - T^{-\alpha+1}} 
\ge 2R$ for $R\ge 1$. 
Then 
\begin{align*}
A_{S',P}^{T'} \ge \frac{1}{2\theta^2 (\alpha -1)},\quad 
\Phi_{S',P}^{T'} (\zeta_\R,\zeta_\C) \le 
\frac{ 2|\zeta|}{\theta^2 (\alpha -1) |\zeta_\C|}
\end{align*}
holds for any $\zeta = (\zeta_\R,\zeta_\C)\in \R^3=\R\oplus\C$ 
with $|\zeta|\le R$. 
\label{C_0 C_1'}
\end{prop}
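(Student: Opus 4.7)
The plan is to execute one change of variables that handles both inequalities simultaneously. Setting $\beta := P^{1/(1+\alpha)} = \theta\sqrt{S^{-\alpha+1}-T^{-\alpha+1}}$ and substituting $y = \beta x$ in each integral normalizes $P$ out of the limits: the bounds $S' = S/\beta$ and $T' = T/\beta$ get sent back to $S$ and $T$, while $Px^\alpha$ becomes $\beta y^\alpha$ (since $P/\beta^\alpha = \beta$). Under this substitution the hypothesis $\theta S^\alpha\sqrt{S^{-\alpha+1}-T^{-\alpha+1}} \ge 2R$ reads simply as $\beta S^\alpha \ge 2R$, which forces $\beta y^\alpha$ to dominate both $1$ and $|\zeta|$ throughout the integration range $[S,T]$.

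For the lower bound on $A_{S',P}^{T'}$, the substitution yields
\[
A_{S',P}^{T'} \;=\; \int_S^T \frac{dy}{\beta + \beta^2 y^\alpha}.
\]
Since $\beta y^\alpha \ge \beta S^\alpha \ge 2R \ge 2$ on $[S,T]$, we have $\beta \le \beta^2 y^\alpha$, so the denominator is at most $2\beta^2 y^\alpha$. Combining the identity $\int_S^T y^{-\alpha}\,dy = (S^{-\alpha+1}-T^{-\alpha+1})/(\alpha-1)$ with $\beta^2 = \theta^2(S^{-\alpha+1}-T^{-\alpha+1})$ gives precisely $A_{S',P}^{T'} \ge 1/(2\theta^2(\alpha-1))$.

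For the upper bound on $\Phi_{S',P}^{T'}(\zeta)$, the same substitution yields
\[
\Phi_{S',P}^{T'}(\zeta) \;=\; \int_S^T \frac{dy}{\beta\,|\zeta - (\beta y^\alpha,0,0)|}.
\]
Because $|\zeta|\le R$ and $\beta y^\alpha \ge \beta S^\alpha \ge 2R$, the reverse triangle inequality gives $|\zeta - (\beta y^\alpha,0,0)| \ge \beta y^\alpha - |\zeta| \ge \beta y^\alpha/2$. Plugging in and using the same elementary integral as in the previous step bounds $\Phi_{S',P}^{T'}(\zeta)$ by $2/(\theta^2(\alpha-1))$, and the asserted estimate then follows from the trivial inequality $|\zeta_\C|\le|\zeta|$.

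No genuine obstacle arises: once one spots the correct normalization $\beta = P^{1/(1+\alpha)}$, both estimates reduce to the same explicit power-law integral $\int_S^T y^{-\alpha}\,dy$. It is worth noting that the argument actually produces the stronger bound $\Phi_{S',P}^{T'}(\zeta) \le 2/(\theta^2(\alpha-1))$; the factor $|\zeta|/|\zeta_\C|$ is carried along so that the conclusion is stated in the form required by the axial-decay hypothesis \textbf{(A6)} when this proposition is invoked later.
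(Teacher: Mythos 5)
Your proof is correct and follows essentially the same route as the paper: the substitution $y=\beta x$ with $\beta=P^{1/(1+\alpha)}$ is exactly the normalization the paper performs implicitly, the hypothesis $\beta S^\alpha\ge 2R$ is used in the same two places (to absorb the constant $1$ into $\beta^2 y^\alpha$ and to get $|\zeta-(\beta y^\alpha,0,0)|\ge \beta y^\alpha/2$), and both arguments reduce to the same integral $\int_S^T y^{-\alpha}\,dy$ together with $\beta^2=\theta^2(S^{-\alpha+1}-T^{-\alpha+1})$. The paper likewise obtains the stronger constant bound $2/(\theta^2(\alpha-1))$ before inserting the factor $|\zeta|/|\zeta_\C|\ge 1$, so your closing remark matches its presentation as well.
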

\begin{proof}
We have 
$1\le S^{-\alpha} x^\alpha$ for all $x\ge S$, then we can see 
\begin{align*}
A_{S',P}^{T'} 
&\ge 
P^{-\frac{1}{1+\alpha}} \int_S^T 
\frac{dx}{S^{-\alpha}x^\alpha + P^\frac{1}{1+\alpha}x^\alpha} \\
&= 
\frac{1}{P^\frac{1}{1+\alpha}(S^{-\alpha}+P^\frac{1}{1+\alpha})} 
\int_S^T \frac{dx}{x^\alpha} \\
&= 
\frac{1}{P^\frac{1}{1+\alpha}S^{-\alpha}(1+S^\alpha P^\frac{1}{1+\alpha})} 
\frac{S^{-\alpha+1} - T^{-\alpha+1}}{\alpha -1}.
\end{align*}
Since we have 
\begin{align*}
S^\alpha P^\frac{1}{1+\alpha}
= \theta \sqrt{S^{-\alpha +1} - T^{-\alpha +1} } 
\ge 2R \ge 1,  
\end{align*}
we obtain 
\begin{align*}
A_{S',P}^{T'} 
\ge \frac{S^{-\alpha+1} - T^{-\alpha+1}}
{2(\alpha -1) P^\frac{1}{1+\alpha}S^{-\alpha}\cdot S^\alpha P^\frac{1}{1+\alpha}}
= \frac{1}{2\theta^2(\alpha -1)}
\end{align*}

Next we consider the upper estimate of $\Phi_{S',P}^{T'} (\zeta)$. 
Take $\zeta$ such that $|\zeta|\le R$, then 
we have $2|\zeta| \le P^\frac{1}{1+\alpha}S^\alpha$ by the assumption. 
Then one can see 
\begin{align*}
\Phi_{S',P}^{T'} (\zeta) 
\le P^{-\frac{1}{1+\alpha}} 
\int_S^T \frac{2 dx}{P^\frac{1}{1+\alpha}x^\alpha} 
&=
P^{\frac{-2}{1+\alpha}} 
\frac{2(S^{-\alpha+1} - T^{-\alpha+1})}{\alpha-1} \\
&\le \frac{ 2}{\theta^2(\alpha-1)}
\le \frac{ 2 |\zeta|}{\theta^2(\alpha-1)|\zeta_\C|}.
\end{align*}
\end{proof}

\begin{prop}\label{prop Psi^S}
Let $\Phi=\Phi_{S',P}^{T'}$ and 
$\Phi_\infty \equiv \frac{1}{\theta^2(\alpha-1)}$. 
Then there exists $C>0$ such that 
$\Phi,\Phi_\infty$ satisfy {\bf (A3-6)} for $R\ge 1$ and  
\begin{align*}
&m=1,\quad 
\varepsilon = \frac{C R}{\theta^3 S^\alpha\sqrt{S^{-\alpha+1} - T^{-\alpha+1}}}, \quad 
C_0 = \frac{1}{2\theta^2 (\alpha-1)}, \\
&C_1 = \frac{1}{\theta^2} 
\max\Big\{ \frac{1}{\alpha-1}, \frac{C}{2}\Big\}, \quad 
\kappa = 1,
\end{align*}
if $\theta S^\alpha\sqrt{S^{-\alpha+1}-T^{-\alpha+1}} \ge 2 R$.
\end{prop}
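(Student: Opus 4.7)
The plan is to verify each of the four conditions {\bf (A3)}--{\bf (A6)} separately for the pair $(\Phi,\Phi_\infty)=(\Phi_{S',P}^{T'},\,\tfrac{1}{\theta^2(\alpha-1)})$, by invoking the two preceding propositions (Propositions~\ref{a3forPsi} and~\ref{C_0 C_1'}) and exploiting the fact that $\Phi_\infty$ is a constant. The analytic content is already packaged in those results, so the task is essentially bookkeeping: showing that the single choice of constants stated in the proposition is simultaneously large (resp.\ small) enough for all four conditions.

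For {\bf (A3)}, I would apply Proposition~\ref{a3forPsi} directly, which under the hypothesis $\theta S^\alpha\sqrt{S^{-\alpha+1}-T^{-\alpha+1}}\ge 2R$ yields a $D$-independent bound $|\Phi-\Phi_\infty|\le CR/(\theta^3 S^\alpha\sqrt{S^{-\alpha+1}-T^{-\alpha+1}})=:\varepsilon$ on $K(R,D)$. Since $D\le 1$, this immediately gives the $m=1$ estimate $|\Phi-\Phi_\infty|\le\varepsilon/D$. For the second bound, I would use the hypothesis once more to estimate $\varepsilon \le C/(2\theta^2)$, so $|\Phi-\Phi_\infty|\le (C/(2\theta^2))/D\le C_1/D$.

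For {\bf (A4)}, one observes that $|\zeta-P(x^\alpha,0,0)|$ depends on $\zeta$ only through $\zeta_\R-Px^\alpha$ and $|\zeta_\C|$, and is increasing in $|\zeta_\C|$; integrating over $x$ preserves both the $S^1$-invariance and the monotonicity, and the constant $\Phi_\infty$ trivially satisfies {\bf (A4)}. For {\bf (A5)}, I would combine \eqref{est1} from Proposition~\ref{lower1}, which yields $\Phi_{S',P}^{T'}(\zeta)\ge A_{S',P}^{T'}\min\{1/|\zeta|,1\}$, with the lower bound $A_{S',P}^{T'}\ge 1/(2\theta^2(\alpha-1))=C_0$ of Proposition~\ref{C_0 C_1'}, which handles the $\Phi$ side. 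Since $\Phi_\infty=2C_0$, the $\Phi_\infty$ side is immediate: for $|\zeta|\ge 1$ one has $2C_0\ge C_0/|\zeta|$, and for $|\zeta|\le 1$ one has $2C_0\ge C_0$. Finally, {\bf (A6)} is trivial with $\kappa=1$: for $|\zeta|\le u$ one has $|\zeta_\C|\le u$, hence $\Phi_\infty=1/(\theta^2(\alpha-1))\le C_1\le C_1u/|\zeta_\C|$.

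The only mildly delicate point, and hence the main ``obstacle,'' is confirming the consistency of the single constant $C_1=\theta^{-2}\max\{1/(\alpha-1),C/2\}$: the first branch is dictated by {\bf (A6)}, where we need $C_1\ge 1/(\theta^2(\alpha-1))$, while the second branch is dictated by the second bound in {\bf (A3)}, where the hypothesis $2R\le\theta S^\alpha\sqrt{S^{-\alpha+1}-T^{-\alpha+1}}$ is precisely what allows the $D$-independent estimate $\varepsilon\le C/(2\theta^2)$ to be absorbed into $C_1/D$. No further estimate is needed beyond the two preceding propositions.
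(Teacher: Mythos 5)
Your proof is correct and follows essentially the same route as the paper: (A4) by inspection, (A5) from \eqref{est1} together with the lower bound on $A_{S',P}^{T'}$ in Proposition \ref{C_0 C_1'}, (A6) from the constancy of $\Phi_\infty$ and $|\zeta_\C|\le|\zeta|\le u$, and (A3) from Proposition \ref{a3forPsi} with the hypothesis $\theta S^\alpha\sqrt{S^{-\alpha+1}-T^{-\alpha+1}}\ge 2R$ used to absorb $\varepsilon$ into $C/(2\theta^2)$. The paper's own proof is just a terser version of exactly this bookkeeping.
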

\begin{proof}
It is obvious that {\bf (A4)} holds. 
Proposition \ref{C_0 C_1'} gives {\bf (A5)} for 
$C_0=\frac{1}{2\theta^2(\alpha -1)}$ 
if we take 
$\theta S^\alpha\sqrt{S^{-\alpha+1}-T^{-\alpha+1}} \ge 2R$. 
{\bf (A6)} holds for $C_1 =\frac{1}{\theta^2(\alpha-1)}$ since 
$\frac{1}{\alpha-1} = \frac{1}{\alpha-1}\frac{|\zeta|}{|\zeta|}
\le \frac{1}{\alpha-1}\frac{|\zeta|}{|\zeta_\C|}$. 
Combining $\theta S^\alpha\sqrt{S^{-\alpha+1}-T^{-\alpha+1}} \ge 2R$ and 
Proposition \ref{a3forPsi}, we can see 
$\varepsilon\le \frac{C}{2\theta^2}$.
\end{proof}

Now, Propositions \ref{prop isom} and 
\ref{prop Psi^S} with $\theta=1$ gives the following theorem.

\begin{thm}\label{limit1}
Let $\{ S_i\}_i$ and $\{ T_i\}_i$ be sequences such that 
$0\le S_i< T_i\le \infty$ and 
$\lim_{i\to \infty} S_i^\alpha\sqrt{S_i^{-\alpha+1}-T_i^{-\alpha+1}} = \infty$, 
then $\{ (\R^3, d_{S_i}^{T_i},0) \}_i$ converges to 
$(\R^3, h_0,0)$ in the pointed Gromov-Hausdorff topology. 
\end{thm}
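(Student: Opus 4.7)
The strategy is to reduce the convergence of $\{(\R^3, d_{S_i}^{T_i},0)\}_i$ to the convergence of a dilated family $\{(\R^3, d_{\Phi_{S_i',P_i}^{T_i'} h_0},0)\}_i$ for which Propositions~\ref{prop Psi^S} and~\ref{prop isom} apply directly. The key idea is to choose the parameter $\theta$ in Proposition~\ref{prop Psi^S} so that the constant limit $\Phi_\infty \equiv \frac{1}{\theta^2(\alpha-1)}$ equals $1$, i.e.\ set
\begin{align*}
\theta := \frac{1}{\sqrt{\alpha-1}}.
\end{align*}
With this $\theta$, define $P_i$ by $P_i^{1/(1+\alpha)} := \theta\sqrt{S_i^{-\alpha+1}-T_i^{-\alpha+1}}$, and set $S_i' := P_i^{-1/(1+\alpha)} S_i$, $T_i' := P_i^{-1/(1+\alpha)} T_i$.

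First I would record the isometry built from the dilation $I_{P_i^{1/(1+\alpha)}}$: by the computation at the start of Section~\ref{sec conv2}, this dilation fixes the origin and pulls $\Phi_{S_i}^{T_i}\cdot h_0$ back to $\Phi_{S_i',P_i}^{T_i'}\cdot h_0$, so it gives an isometry of pointed metric spaces
\begin{align*}
(\R^3, d_{\Phi_{S_i',P_i}^{T_i'} h_0}, 0) \;\cong\; (\R^3, d_{S_i}^{T_i}, 0).
\end{align*}
Thus it suffices to prove that $\{(\R^3, d_{\Phi_{S_i',P_i}^{T_i'} h_0}, 0)\}_i$ converges in the pointed Gromov--Hausdorff topology to $(\R^3, h_0, 0)$, which is the metric space induced by the constant function $\Phi_\infty \equiv 1 = \frac{1}{\theta^2(\alpha-1)}$.

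Next, for each fixed $R\ge 1$, the hypothesis $S_i^\alpha\sqrt{S_i^{-\alpha+1}-T_i^{-\alpha+1}} \to \infty$ guarantees $\theta S_i^\alpha\sqrt{S_i^{-\alpha+1}-T_i^{-\alpha+1}} \ge 2R$ for all sufficiently large $i$, so Proposition~\ref{prop Psi^S} applies and shows that $\Phi = \Phi_{S_i',P_i}^{T_i'}$ and $\Phi_\infty \equiv 1$ satisfy assumptions (A3-6) with $m=1$, $\kappa=1$, $C_0,C_1$ depending only on $\alpha$, and with
\begin{align*}
\varepsilon = \varepsilon_i(R) = \frac{C R}{\theta^3\, S_i^\alpha\sqrt{S_i^{-\alpha+1}-T_i^{-\alpha+1}}} \xrightarrow[i\to\infty]{} 0.
\end{align*}
Then Proposition~\ref{prop isom} produces, for any chosen $r>0$ and all sufficiently large $i$, an $(r,\delta_i)$-isometry from $(\R^3, d_{\Phi_{S_i',P_i}^{T_i'} h_0}, 0)$ to $(\R^3, h_0, 0)$ with $\delta_i \to 0$, which gives the desired pointed Gromov--Hausdorff convergence.

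Since all the machinery is already available in the earlier sections, there is no serious analytic obstacle. The only delicate point is bookkeeping: one has to (i) verify that the choice $\theta = 1/\sqrt{\alpha-1}$ makes the limiting constant metric equal to $h_0$ exactly, not a rescaling, and (ii) check that for each fixed $r$ the radius $R(r) = \rho(u(r)+2)+1$ prescribed by Proposition~\ref{prop isom} can be taken uniformly in $i$, so that the condition $\theta S_i^\alpha\sqrt{S_i^{-\alpha+1}-T_i^{-\alpha+1}} \ge 2R(r)$ is eventually satisfied and $\varepsilon_i(R(r)) \to 0$. Both are immediate from the stated hypothesis.
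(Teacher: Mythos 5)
Your proof is correct and follows essentially the same route as the paper, which simply invokes Propositions~\ref{prop Psi^S} and~\ref{prop isom} (the paper takes $\theta=1$, so its limit function is the constant $\frac{1}{\alpha-1}$ and the resulting space is identified with $(\R^3,h_0,0)$ by a dilation, whereas your choice $\theta=1/\sqrt{\alpha-1}$ makes the limit literally $h_0$ --- a cosmetic difference). Your additional bookkeeping about the uniformity of $R(r)$ in $i$ and the eventual validity of $\theta S_i^\alpha\sqrt{S_i^{-\alpha+1}-T_i^{-\alpha+1}}\ge 2R$ is exactly what the paper leaves implicit.
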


\paragraph{(2).}

Next put $P^\frac{1}{1+\alpha} = \theta|T-S|$ for 
$0\le S<T$ and $\theta>0$, 
and let $S',T'$ be as in \eqref{S'T'}. 
Then we can show the following similarly to Proposition \ref{prop Psi^T}.

\begin{prop}\label{a3forPsi2}
Let $D\ge 1$. 
We have 
\begin{align*}
\Big| \Phi_{S',P}^{T'}(\zeta) - \frac{1}{\theta |\zeta|} \Big| 
&\le \frac{2}{\theta D}, \\
\Big| \Phi_{S',P}^{T'}(\zeta) - \frac{1}{\theta |\zeta|} \Big| &\le 
\frac{1+\theta T^\alpha (T-S)}{D^3} 
T^{\alpha}(T-S). 
\end{align*}
for all $\zeta\in K(R,D)$.
\end{prop}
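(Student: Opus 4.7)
The plan rests on the clean identity $T'-S' = P^{-1/(1+\alpha)}(T-S) = 1/\theta$ coming from the hypothesis $P^{1/(1+\alpha)} = \theta(T-S)$ together with $S' = P^{-1/(1+\alpha)}S$, $T' = P^{-1/(1+\alpha)}T$. This lets us absorb $\frac{1}{\theta|\zeta|}$ into the defining integral and write
$$
\Phi_{S',P}^{T'}(\zeta) - \frac{1}{\theta|\zeta|}
= \int_{S'}^{T'}\Big(\frac{1}{|\zeta - P(x^\alpha,0,0)|} - \frac{1}{|\zeta|}\Big)\,dx,
$$
which is the uniform starting point for both inequalities.

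The first inequality follows from the observation that for $\zeta\in K(R,D)$ we have both $|\zeta|\ge D$ and $|\zeta - P(x^\alpha,0,0)|\ge D$ for every $x\ge 0$. For the latter I would split into the two cases of the definition of $\inf_{y\in\mathbf{l}}|\zeta-y|$: if $\zeta_\R\ge 0$ then $|\zeta - P(x^\alpha,0,0)|\ge |\zeta_\C|\ge D$, while if $\zeta_\R<0$ then $\zeta_1-Px^\alpha<0$ and $|\zeta - P(x^\alpha,0,0)|^2 \ge \zeta_1^2 + |\zeta_\C|^2 = |\zeta|^2\ge D^2$. Triangle inequality and the bound on the integrand then yield $\Phi_{S',P}^{T'}(\zeta)\le (T'-S')/D = 1/(\theta D)$ and $1/(\theta|\zeta|)\le 1/(\theta D)$, giving the bound $2/(\theta D)$.

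For the second inequality I would rewrite the integrand via the algebraic identity
$$
\frac{1}{|\zeta-v|} - \frac{1}{|\zeta|}
= \frac{2\zeta\cdot v - |v|^2}{|\zeta||\zeta-v|(|\zeta|+|\zeta-v|)}
$$
applied with $v = P(x^\alpha,0,0)$, estimate the denominator below by $2D^3$ using the two lower bounds above, and split the numerator into the contributions $2|\zeta||v|$ and $|v|^2$ so that the $|\zeta|$ appearing in the first term cancels one $|\zeta|$ factor in the denominator. All remaining integrands depend only on $|v|=Px^\alpha$ and are handled by the substitution $y = P^{1/(1+\alpha)}x$, which maps $[S',T']$ to $[S,T]$ and converts $Px^\alpha\,dx$ into $y^\alpha\,dy$. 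This yields the clean bounds
$$
\int_{S'}^{T'} Px^\alpha\,dx = \frac{T^{\alpha+1}-S^{\alpha+1}}{\alpha+1}\le T^\alpha(T-S), \qquad
\int_{S'}^{T'} (Px^\alpha)^2\,dx \le \theta\bigl[T^\alpha(T-S)\bigr]^2,
$$
where the second estimate uses $Px^\alpha\le PT'^\alpha = \theta T^\alpha(T-S)$.

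The main obstacle, and the point where care is required, is extracting a uniform $D^{-3}$ factor in the final form: the naive estimate of the $2\zeta\cdot v$ term gives only $D^{-2}$, so one has to be careful not to bound $|\zeta|$ by $R$ prematurely and instead keep the product $|\zeta|(|\zeta|+|\zeta-v|)$ intact when cancelling. The hypothesis $D\ge 1$ is then what allows the $D^{-2}$ and $D^{-3}$ contributions to be consolidated into the single factor $\frac{(1+\theta T^\alpha(T-S))\,T^\alpha(T-S)}{D^3}$ displayed in the statement; the two terms of the claimed numerator $1+\theta T^\alpha(T-S)$ correspond exactly to the first-order ($|v|$) and second-order ($|v|^2$) contributions identified above.
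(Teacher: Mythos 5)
Your proof follows the same route as the paper's: both write the difference as $\int_{S'}^{T'}\bigl(|\zeta-P(x^\alpha,0,0)|^{-1}-|\zeta|^{-1}\bigr)dx$ using $T'-S'=1/\theta$, apply the identity $\frac{1}{|\zeta-v|}-\frac{1}{|\zeta|}=\frac{2\zeta\cdot v-|v|^2}{|\zeta||\zeta-v|(|\zeta|+|\zeta-v|)}$, bound the denominators below by powers of $D$ via $|\zeta|\ge D$ and $|\zeta-P(x^\alpha,0,0)|\ge D$ on $K(R,D)$, and evaluate the resulting integrals of $Px^\alpha$ and $(Px^\alpha)^2$ by the rescaling that sends $[S',T']$ to $[S,T]$; your estimates $\int_{S'}^{T'}Px^\alpha dx\le T^\alpha(T-S)$ and $\int_{S'}^{T'}(Px^\alpha)^2dx\le\theta[T^\alpha(T-S)]^2$ are exactly what the paper obtains (the paper carries an unnamed $C_\alpha$ through this computation, so neither argument literally produces the constant $1+\theta T^\alpha(T-S)$, but this is a harmless discrepancy shared with the source). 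The one point that needs correcting is the last sentence: the consolidation of the first-order $D^{-2}$ contribution into the $D^{-3}$ numerator requires $D^{-2}\le D^{-3}$, i.e.\ $D\le 1$, not $D\ge 1$ as you assert. The hypothesis ``$D\ge 1$'' in the statement is evidently a misprint for $D\le 1$ --- this proposition is invoked to verify {\bf (A3)} with $m=3$, which is a condition for $D\le 1$, and for large $D$ the second inequality is genuinely false since the first-order term really is of size $D^{-2}$ (test $\zeta=(-D,0,0)$). With $D\le 1$ your argument closes exactly as the paper's does.
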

\begin{proof}
The first inequality is obviously shown by 
$\Phi_{S',P}^{T'}(\zeta) \le \frac{1}{\theta D}$ and 
$\frac{1}{|\zeta|} \le \frac{1}{D}$. 
The second inequality follows from 
\begin{align*}
&\quad \quad \Big| \int_{P^\frac{1}{\alpha(1+\alpha)} S }^{ P^\frac{1}{\alpha(1+\alpha)} T } \frac{dx}{|\zeta - (x^\alpha,0,0)|}
- \int_{P^\frac{1}{\alpha(1+\alpha)} S }^{ P^\frac{1}{\alpha(1+\alpha)} T } 
\frac{dx}{|\zeta|} \Big| \\
&\le \int_{P^\frac{1}{\alpha(1+\alpha)} S }^{ P^\frac{1}{\alpha(1+\alpha)} T } 
\frac{2 x^\alpha |\zeta| + x^{2\alpha}}
{|\zeta - (x^\alpha,0,0)||\zeta|(|\zeta - (x^\alpha,0,0)| + |\zeta|)}dx \\
&\le \int_{P^\frac{1}{\alpha(1+\alpha)} S }^{ P^\frac{1}{\alpha(1+\alpha)} T } 
\frac{2 x^\alpha}{D^2} dx
+ \int_{P^\frac{1}{\alpha(1+\alpha)} S }^{ P^\frac{1}{\alpha(1+\alpha)} T } 
\frac{x^{2\alpha}}{D^3} dx \\
&\le C_\alpha \frac{ P^\frac{1}{\alpha} (T^{\alpha+1}-S^{\alpha+1}) 
+ P^\frac{2\alpha + 1}{\alpha(1+\alpha)} (T^{2\alpha+1}-S^{2\alpha+1})}{D^3} \\
&= C_\alpha \theta^{1+\frac{1}{\alpha}}T^{\alpha+1}(T-S)^{1+\frac{1}{\alpha}} 
\frac{ 1-(S/T)^{\alpha+1} 
+ \theta (T-S) T^{\alpha}(1-(S/T)^{2\alpha+1})}{D^3}
\end{align*}
By the similar argument to Proposition \ref{a3forPsi}, 
we can replace $1-(S/T)^{\alpha+1}$ or $1-(S/T)^{2\alpha+1}$ by $1-S/T$, hence 
we obtain the assertion.
\end{proof}

\begin{prop}
\begin{align*}
A_{S',P}^{T'} \ge \frac{1}{\theta (1+\theta T^\alpha( T-S ))},\quad 
\Phi_{S',P}^{T'} (\zeta_\R,\zeta_\C) \le 
\frac{1}{\theta|\zeta_\C|}
\end{align*}
holds for any $\zeta = (\zeta_\R,\zeta_\C)\in \R^3=\R\oplus\C$. 
\label{C_0 C_1}
\end{prop}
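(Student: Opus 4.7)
The plan is to estimate both quantities directly by substituting the explicit formulas for $S'$ and $T'$ and then bounding the integrand on $[S',T']$ by simple geometric considerations. The key preliminary computation is that since $P^{\frac{1}{1+\alpha}} = \theta(T-S)$, we have
\begin{align*}
S' = P^{-\frac{1}{1+\alpha}}S = \frac{S}{\theta(T-S)},\qquad T' = \frac{T}{\theta(T-S)},
\end{align*}
so the length of the interval is $T' - S' = \frac{1}{\theta}$, and the maximum value of $Px^\alpha$ on $[S',T']$ is attained at $x=T'$ and equals $P\cdot T^\alpha/P^{\alpha/(1+\alpha)} = P^{\frac{1}{1+\alpha}}T^\alpha = \theta T^\alpha(T-S)$.

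For the lower bound on $A_{S',P}^{T'}$, I would bound $1 + Px^\alpha$ from above on $[S',T']$ by $1 + \theta T^\alpha(T-S)$ using the above observation and monotonicity of $x^\alpha$. Pulling this constant out of the integral gives
\begin{align*}
A_{S',P}^{T'} \ge \frac{T'-S'}{1 + \theta T^\alpha(T-S)} = \frac{1}{\theta(1 + \theta T^\alpha(T-S))},
\end{align*}
as required.

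For the upper bound on $\Phi_{S',P}^{T'}$, I would use the elementary geometric fact that for any $x \in \R$, the point $P(x^\alpha,0,0)$ lies on the $\zeta_\R$-axis, so its distance to $\zeta = (\zeta_\R,\zeta_\C) \in \R \oplus \C$ is at least the distance from $\zeta$ to that axis, namely $|\zeta_\C|$. Explicitly, $|\zeta - P(x^\alpha,0,0)|^2 = (\zeta_\R - Px^\alpha)^2 + |\zeta_\C|^2 \ge |\zeta_\C|^2$. Integrating the reciprocal over $[S',T']$ gives $\Phi_{S',P}^{T'}(\zeta) \le (T'-S')/|\zeta_\C| = 1/(\theta|\zeta_\C|)$.

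There is really no substantial obstacle here: both estimates reduce to one-line bounds on the integrand once the relation $T' - S' = 1/\theta$ is used. The only point requiring a little attention is that for the first estimate we need the worst-case bound of $Px^\alpha$ on the interval, which relies on $x \mapsto x^\alpha$ being increasing (true since $\alpha > 1$ was fixed throughout Section \ref{sec const}), whereas the second estimate is a purely geometric statement independent of $\alpha$, $S$, $T$, and $P$.
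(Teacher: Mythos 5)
Your proof is correct and follows essentially the same route as the paper: bound the integrand of $A_{S',P}^{T'}$ by its worst value at the right endpoint (the paper does this after the substitution $x\mapsto P^{-1/(1+\alpha)}x$, you do it directly on $[S',T']$ — a purely cosmetic difference), and bound $|\zeta-P(x^\alpha,0,0)|$ below by $|\zeta_\C|$ for the second estimate, with $T'-S'=1/\theta$ doing the rest in both cases.
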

\begin{proof}
One can see 
\begin{align*}
A_{S',P}^{T'}
= P^{-\frac{1}{1+\alpha}} \int_S^T
\frac{dx}{1+P^\frac{1}{1+\alpha}x^\alpha}
&\ge P^{-\frac{1}{1+\alpha}} \int_S^T 
\frac{dx}{1+P^\frac{1}{1+\alpha} T^\alpha }\\
&\ge \frac{T - S}{P^{\frac{1}{1+\alpha}}
(1 + P^\frac{1}{1+\alpha}T^\alpha )}\\
&= \frac{1}{\theta( 1 + \theta T^\alpha (T-S) )}. 
\end{align*}

We can also obtain 
\begin{align*}
\Phi_{S',P}^{T'} (\zeta) 
\le \frac{T-S}{P^{\frac{1}{1+\alpha}}|\zeta_\C|}
= \frac{1}{\theta |\zeta_\C|}.
\end{align*}
\end{proof}

Combining Propositions \ref{a3forPsi2} and \ref{C_0 C_1}, 
the next proposition is obtained.

\begin{prop}\label{prop Psi^T}
Let $\Phi=\Phi_{S',P}^{T'}$ and $\Phi_\infty(\zeta) = \frac{1}{\theta|\zeta|}$. 
Then 
$\Phi,\Phi_\infty$ satisfy {\bf (A3-6)} for $R\ge 1$ and  
\begin{align*}
m&=3,\quad 
\varepsilon = (1+\theta T^\alpha (T-S)) T^{\alpha}(T-S), \\
C_0 &= \frac{1}{\theta( 1+\theta T^\alpha (T-S))}, \quad 
C_1 = \frac{2}{\theta}, \quad 
\kappa = 0,
\end{align*}
for any $0\le S<T$. 
\end{prop}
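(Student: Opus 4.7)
The plan is a direct bookkeeping argument: unpack the four conditions {\bf (A3)}--{\bf (A6)} and read off each one from Propositions \ref{a3forPsi2} and \ref{C_0 C_1}, supplemented by the pointwise lower bound \eqref{est1}.

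For {\bf (A3)}, both inequalities are exactly the content of Proposition \ref{a3forPsi2}: its first inequality gives $|\Phi-\Phi_\infty|\le 2/(\theta D) = C_1/D$ with $C_1 = 2/\theta$, and its second inequality gives $|\Phi-\Phi_\infty|\le \varepsilon/D^3$ with $\varepsilon = (1+\theta T^\alpha(T-S))T^\alpha(T-S)$. So the $\varepsilon/D^m$ bound holds with $m=3$.

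For {\bf (A4)}, both $\Phi$ and $\Phi_\infty$ are manifestly invariant under the $S^1$-rotation $\zeta_\C\mapsto e^{i\theta}\zeta_\C$ and monotone decreasing in $|\zeta_\C|$, since $|\zeta|^2=\zeta_\R^2+|\zeta_\C|^2$ controls $\Phi_\infty(\zeta)=1/(\theta|\zeta|)$ and $|\zeta-P(x^\alpha,0,0)|^2=(\zeta_\R-Px^\alpha)^2+|\zeta_\C|^2$ controls the integrand defining $\Phi_{S',P}^{T'}$.

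For {\bf (A5)}, the bound on $\Phi_\infty(\zeta)=1/(\theta|\zeta|)$ is immediate since $C_0=1/(\theta(1+\theta T^\alpha(T-S)))\le 1/\theta$, so $\Phi_\infty\ge C_0/|\zeta|$ when $|\zeta|\ge 1$ and $\Phi_\infty\ge 1/\theta\ge C_0$ when $|\zeta|\le 1$. For $\Phi$, combine inequality \eqref{est1} of Proposition \ref{lower1}, which reads
\begin{align*}
\Phi_{S',P}^{T'}(\zeta)\ge A_{S',P}^{T'}\min\{1/|\zeta|,1\},
\end{align*}
with the lower bound $A_{S',P}^{T'}\ge C_0$ from Proposition \ref{C_0 C_1}.

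For {\bf (A6)}, with $\kappa=0$ and $C_1=2/\theta$, the required estimate reduces to $1/(\theta|\zeta|)\le 2/(\theta|\zeta_\C|)$, which follows from the trivial inequality $|\zeta_\C|\le |\zeta|$. There is no real obstacle here; the only nontrivial step is appealing to \eqref{est1} to promote the integral lower bound on $A_{S',P}^{T'}$ into a pointwise lower bound on $\Phi$.
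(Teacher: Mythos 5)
Your proposal is correct and follows essentially the same route as the paper, which simply states that the proposition is obtained by combining Propositions \ref{a3forPsi2} and \ref{C_0 C_1}. Your explicit appeal to \eqref{est1} to convert the lower bound on $A_{S',P}^{T'}$ into the pointwise lower bound on $\Phi$ required by {\bf (A5)} is exactly the (implicit) bridging step the paper relies on, so nothing is missing.
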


By Propositions \ref{prop isom} and \ref{prop Psi^T} for $\theta=1$, 
we have the next result. 
\begin{thm}\label{limit2}
Let $\{ S_i\}_i$ and $\{ T_i\}_i$ be a sequence 
such that $0\le S_i<T_i$ and 
$\lim_{i\to \infty} T_i^\alpha|T_i-S_i|=0$, 
then $\{ (\R^3, d_{S_i}^{T_i},0) \}_i$ converges to 
$(\R^3, \frac{1}{|\zeta|}h_0,0)$ in the pointed Gromov-Hausdorff topology. 
\end{thm}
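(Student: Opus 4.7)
The plan is to deduce Theorem \ref{limit2} by combining Proposition \ref{prop Psi^T} with Proposition \ref{prop isom}, after first absorbing the free parameter $P$ via a dilation so that the hypothesis $T_i^\alpha|T_i-S_i|\to 0$ becomes the small parameter $\varepsilon$ in assumption \textbf{(A3)}. Concretely, for each $i$ I set $\theta = 1$ and $P_i^{1/(1+\alpha)} = |T_i - S_i|$, and define $S'_i = P_i^{-1/(1+\alpha)} S_i$, $T'_i = P_i^{-1/(1+\alpha)} T_i$. The pull-back identity ${I_{P_i^{1/(1+\alpha)}}}^*(\Phi_{S_i}^{T_i}\cdot h_0) = \Phi_{S'_i,P_i}^{T'_i}\cdot h_0$ derived in Section \ref{sec conv2} provides a pointed isometry $(\R^3, d_i, 0) \to (\R^3, d_{S_i}^{T_i}, 0)$ via $\zeta \mapsto P_i^{-1/(1+\alpha)} \zeta$, where $d_i$ is the metric induced by $\Phi_{S'_i,P_i}^{T'_i}\cdot h_0$. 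Hence it suffices to show $\{(\R^3, d_i, 0)\}_i \xrightarrow{GH} (\R^3, h_1, 0)$.

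Proposition \ref{prop Psi^T} with $\theta = 1$ tells us that $\Phi = \Phi_{S'_i,P_i}^{T'_i}$ and $\Phi_\infty(\zeta) = 1/|\zeta|$ satisfy \textbf{(A3-6)} for every $R \ge 1$ with
\begin{align*}
m = 3,\quad C_1 = 2,\quad \kappa = 0,\quad
\varepsilon_i = (1 + T_i^\alpha(T_i-S_i))\, T_i^\alpha(T_i - S_i),\quad
C_{0,i} = \frac{1}{1+T_i^\alpha(T_i-S_i)}.
\end{align*}
Note that the target metric space $(\R^3, h_1, 0)$ is precisely the one induced by $\Phi_\infty \cdot h_0$. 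Under the assumed $T_i^\alpha|T_i-S_i|\to 0$, one has $\varepsilon_i \to 0$ and $C_{0,i}\to 1$, so in particular $C_{0,i}\ge 1/2$ for $i$ large; meanwhile $C_1, \kappa, m$ are independent of $i$.

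Now fix arbitrary $r, \delta > 0$ and set $R = \rho(u(r)+2)+1$ as in Proposition \ref{prop isom}; since $\rho$ and $u$ depend only on $C_0, C_1, \kappa$, this $R$ is uniformly bounded in $i$ once $C_{0,i}\ge 1/2$. For such $i$, Proposition \ref{prop isom} produces an $(r,\delta_i)$-isometry from $(\R^3, d_i, 0)$ to $(\R^3, h_1, 0)$ with
\begin{align*}
\delta_i \;\le\; C(1 + \sqrt{C_1})(1 + C_{0,i}^{-1/2})\, R^{1+\kappa/2}\, \varepsilon_i^{1/(2(1+m))}.
\end{align*}
Since $\varepsilon_i \to 0$ while every other factor stays bounded, $\delta_i \to 0$, so $\delta_i < \delta$ for $i$ sufficiently large, which yields the desired pointed Gromov--Hausdorff convergence. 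The only real subtlety is ensuring the constants furnished by Proposition \ref{prop Psi^T} remain uniformly controlled as $i\to\infty$; the choice $\theta = 1$ is exactly what makes $C_{0,i}\to 1$ rather than degenerating, after which the conclusion is automatic since all of the genuinely analytic work -- the pointwise comparison $|\Phi - \Phi_\infty|$, the path-shortening argument near the singular ray $\mathbf{l}$, and the conversion of truncated length bounds into a Gromov--Hausdorff statement -- has already been packaged in Propositions \ref{a3forPsi2}--\ref{prop Psi^T} and \ref{prop isom}.
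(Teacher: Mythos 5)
Your proposal is correct and follows exactly the paper's route: the paper proves Theorem \ref{limit2} precisely by invoking Proposition \ref{prop Psi^T} with $\theta=1$ together with Proposition \ref{prop isom}, with the dilation $I_{P^{1/(1+\alpha)}}$ absorbing $P$ as set up in Section \ref{sec conv2}. You have merely spelled out the bookkeeping (uniformity of $C_0,C_1,\kappa,m$ in $i$ and $\varepsilon_i\to 0$) that the paper leaves implicit.
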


\subsection{Convergence (3)}\label{sec conv3}
Here, we fix $a>0$ and $n$ and suppose that 
$T_{a,n}=a^\frac{1}{1+\alpha}K_{2n+1}$ is sufficiently small 
and $S_{a,n+1}=a^\frac{1}{1+\alpha}K_{2n+2}$ is sufficiently large. 
Fix $P$ and $\theta$ such that 
\begin{align*}
P^\frac{1}{1+\alpha} = \theta( T_{a,n} - S_{a,n} ) 
= \sqrt{S_{a,n+1}^{-\alpha + 1} - T_{a,n+1}^{-\alpha + 1}}. 
\end{align*}
Put $S'_l=P^\frac{-1}{1+\alpha}S_{a,l}$ and $T'_l=P^\frac{-1}{1+\alpha}T_{a,l}$.

\begin{prop}\label{conv conf flat}
Let $R\ge 1$ and $D\le 1$,  and $P$ be as above. 
Assume $P^\frac{1}{\alpha(1+\alpha)}S_{a,n+2}\ge (2R)^\frac{1}{\alpha}$. 
Then there exists a constant $C_\alpha>0$ depending only on 
$\alpha$ such that 
\begin{align*}
|\Phi_a(\zeta) - \Phi_{S'_n,P}^{T'_n} (\zeta)
-  \Phi_{S'_{n+1},P}^{T'_{n+1}}(\zeta)| 
\le \frac{C_\alpha\varepsilon_{a,n}}{D},
\end{align*}
for any $\zeta\in K(R,D)$, 
where $\varepsilon_{a,n}$ is the constant defined by 
\begin{align*}
\varepsilon_{a,n} 
= \frac{1+K_{2n-1}}{\theta( K_{2n+1} - K_{2n})} 
+ \frac{K_{2n+4}^{-\alpha+1}}{K_{2n+2}^{-\alpha+1}-K_{2n+3}^{-\alpha+1}}.
\end{align*}
\end{prop}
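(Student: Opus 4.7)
The plan is to start from Proposition \ref{conv1} applied to $\Lambda$, which gives
\begin{align*}
\Big| \Phi_a(\zeta) - \sum_{l=0}^\infty \Phi_{S'_l,P}^{T'_l}(\zeta)\Big| \le \frac{2}{ND} = \frac{2}{D}\Big(\frac{a}{P}\Big)^{1/(1+\alpha)}
\end{align*}
for $\zeta\in K(R,D)$, noting that the $S_n,T_n$ of Section \ref{sec const} coincide with the primed $S'_n,T'_n$ of the current subsection. The desired bound therefore reduces to estimating the tail $\sum_{l\ne n,n+1}\Phi_{S'_l,P}^{T'_l}(\zeta)$, which I split into a low-index tail $l\le n-1$ and a high-index tail $l\ge n+2$.

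For the low tail, I apply \eqref{est5} with $n_0=n-1$ to obtain $\sum_{l=0}^{n-1}\Phi_{S'_l,P}^{T'_l}(\zeta)\le T'_{n-1}/D$. For the high tail, I apply \eqref{est4} with $n_0=n+2$, which is legitimate precisely when $P(S'_{n+2})^\alpha\ge 2|\zeta|$; substituting $S'_{n+2}=P^{-1/(1+\alpha)}S_{a,n+2}$ turns this into $P^{1/(1+\alpha)}S_{a,n+2}^\alpha\ge 2R$, i.e.\ the stated hypothesis $P^{1/(\alpha(1+\alpha))}S_{a,n+2}\ge(2R)^{1/\alpha}$. This yields $\sum_{l\ge n+2}\Phi_{S'_l,P}^{T'_l}(\zeta)\le \tfrac{2}{P(\alpha-1)}(S'_{n+2})^{-\alpha+1}$.

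It remains to rewrite everything in the variables $K_j$ and $\theta$. Using the defining identity $P^{1/(1+\alpha)}=\theta a^{1/(1+\alpha)}(K_{2n+1}-K_{2n})$ one computes $(a/P)^{1/(1+\alpha)}=[\theta(K_{2n+1}-K_{2n})]^{-1}$ and $T'_{n-1}=K_{2n-1}/[\theta(K_{2n+1}-K_{2n})]$, so the first two contributions combine into $(2+K_{2n-1})/[D\theta(K_{2n+1}-K_{2n})]$, which is bounded by a constant times the first summand of $\varepsilon_{a,n}/D$. For the upper tail, the second defining identity $P^{-2/(1+\alpha)}=(S_{a,n+1}^{-\alpha+1}-T_{a,n+1}^{-\alpha+1})^{-1}$ combined with $S_{a,l}=a^{1/(1+\alpha)}K_{2l}$ makes the factors of $a$ cancel, leaving $\tfrac{2}{\alpha-1}\cdot K_{2n+4}^{-\alpha+1}/(K_{2n+2}^{-\alpha+1}-K_{2n+3}^{-\alpha+1})$, which is the second summand of $\varepsilon_{a,n}$; using $D\le 1$ one inserts a harmless $1/D$. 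Collecting the three contributions and absorbing $\alpha$-dependent factors into $C_\alpha$ gives the claim. The only nontrivial step is the bookkeeping on powers of $P$ and $a$ in the high-tail estimate and verifying that the hypothesis $P^{1/(\alpha(1+\alpha))}S_{a,n+2}\ge (2R)^{1/\alpha}$ is exactly what makes \eqref{est4} applicable; the remaining manipulations are routine.
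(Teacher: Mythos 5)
Your proposal is correct and follows essentially the same route as the paper: Proposition \ref{conv1} for the main discrete-to-integral error, \eqref{est5} with $n_0=n-1$ for the low-index tail, \eqref{est4} with $n_0=n+2$ for the high-index tail (whose applicability condition is exactly the stated hypothesis), and then the same three identities expressing $(a/P)^{1/(1+\alpha)}$, $P^{-1/(1+\alpha)}T_{a,n-1}$ and $S_{a,n+2}^{-\alpha+1}/P^{2/(1+\alpha)}$ in terms of $\theta$ and the $K_j$. The bookkeeping is all accurate, so nothing further is needed.
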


\begin{proof}
By Propositions \ref{conv1} and \eqref{est4}\eqref{est5}, 
we have 
\begin{align*}
|\Phi_a - \Phi_{S'_n,P}^{T'_n} 
-  \Phi_{S'_{n+1},P}^{T'_{n+1}}| 
\le \frac{2(\frac{a}{P})^\frac{1}{1+\alpha} +P^\frac{-1}{1+\alpha}T_{a,n-1}}{D} 
+ \frac{2S_{a,n+2}^{-\alpha + 1}}{P^\frac{2}{1+\alpha}(\alpha -1)},
\end{align*}
if $P^\frac{1}{\alpha(1+\alpha)}S_{a,n+2}\ge (2R)^\frac{1}{\alpha}$. 
Since we have 
\begin{align*}
\Big(\frac{a}{P}\Big)^\frac{1}{1+\alpha} 
&= \frac{1}{\theta( K_{2n+1} - K_{2n})},\\ 
P^\frac{-1}{1+\alpha}T_{a,n-1} 
&= \frac{K_{2n-1}}{\theta( K_{2n+1} - K_{2n})},\\
\frac{S_{a,n+2}^{-\alpha + 1}}{P^\frac{2}{1+\alpha}}
&= \frac{K_{2n+4}^{-\alpha+1}}
{K_{2n+2}^{-\alpha+1}-K_{2n+3}^{-\alpha+1}},
\end{align*}
then we have the assertion. 
\end{proof}

Here, the assumption $P^\frac{1}{\alpha(1+\alpha)}S_{a,n+2}\ge 
(2R)^\frac{1}{\alpha}$ can be 
replaced by 
\begin{align*}
\Big( \frac{K_{2n+4}}{K_{2n+2}}\Big)^\alpha
S_{a,n+1}^\alpha \sqrt{ S_{a,n+1}^{-\alpha + 1} - T_{a,n+1}^{-\alpha + 1} }
\ge 2R.
\end{align*}

We can apply Propositions \ref{a3forPsi} and \ref{a3forPsi2} to 
$\Phi_{S'_n,P}^{T'_n}$ and $\Phi_{S'_{n+1},P}^{T'_{n+1}}$. 
If we put 
\begin{align*}
S=S_{a,n+1},\quad T=T_{a,n+1},\quad \theta=1,\quad 
P^\frac{1}{1+\alpha} 
= \sqrt{S_{a,n+1}^{-\alpha +1} - T_{a,n+1}^{-\alpha +1}}, 
\end{align*}
in Proposition \ref{a3forPsi}, then we have 
\begin{align*}
\Big|\Phi_{S'_{n+1},P}^{T'_{n+1}} - \frac{1}{\alpha - 1}\Big| \le 
\frac{C R}{S_{a,n+1}^\alpha\sqrt{S_{a,n+1}^{-\alpha + 1}
- T_{a,n+1}^{-\alpha + 1}}}
\end{align*}
for any $\zeta\in K(R,D)$ 
if $S_{a,n+1}^\alpha\sqrt{S_{a,n+1}^{-\alpha + 1} 
- T_{a,n+1}^{-\alpha + 1}}\ge 2 R$.

If we put 
\begin{align*}
S=S_{a,n},\quad 
T=T_{a,n},\quad 
P^\frac{1}{1+\alpha} = \theta (T_{a,n} -S_{a,n}),
\end{align*}
in Proposition \ref{a3forPsi2}, then we have 
\begin{align*}
\Big|\Phi_{S'_n,P}^{T'_n} - \frac{1}{\theta|\zeta|}\Big| 
&\le \frac{2}{\theta D},\\ 
\Big|\Phi_{S'_n,P}^{T'_n} - \frac{1}{\theta|\zeta|}\Big| 
&\le \frac{ 1+\theta T_{a,n}^\alpha (T_{a,n} -S_{a,n}) }{D^3} 
T_{a,n}^\alpha (T_{a,n} -S_{a,n}).
\end{align*}

Now, we put $\Phi=\Phi_a$, 
$\Phi_\infty = \frac{1}{\alpha-1} + \frac{1}{\theta|\zeta|}$. 
Combining above arguments and Proposition \ref{conv conf flat}, 
we can describe $\varepsilon,C_1$ in {\bf (A3)} 
explicitly, with $m=3$. 
Moreover, by Propositions \ref{a3forPsi}, \ref{a3forPsi2}, 
\ref{C_0 C_1'} and \ref{C_0 C_1}, we obtain 
$C_0,C_1$ in {\bf (A5-6)} and $\kappa=1$. 
Fix a constant $A>0$ and suppose  
\begin{align*}
A^{-1} \le \theta \le A,\quad 
S_{a,n+1}^\alpha\sqrt{S_{a,n+1}^{-\alpha + 1} 
- T_{a,n+1}^{-\alpha + 1}}\ge 2 R,
\end{align*}
and  and $P$ is as above. 
Then we can take these constants in {\bf (A3-6)} 
being only depending on $\alpha,A,R$, 
if $\varepsilon_{a,n}$, $S_{a,n+1}^{-\alpha}
( S_{a,n+1}^{-\alpha+1} - T_{a,n+1}^{-\alpha+1} )^\frac{-1}{2}$ 
and $T_{a,n}^\alpha( T_{a,n} - S_{a,n} )$ are sufficiently small. 
Therefore, we obtain the following result.

\begin{thm}\label{main2}
Let $(X,g_\Lambda)$ be as in Section \ref{sec const}, 
take a subsequence 
\begin{align*}
\{ K_{n_0}<K_{n_1}<K_{n_2}<\cdots \} \subset \{ K_0<K_1<K_2<\cdots\},
\end{align*}
and suppose 
\begin{align}
\lim_{i\to \infty } \Big\{\frac{ K_{2n_i-1} }{ K_{2n_i+1} - K_{2n_i}} 
+ \frac{K_{2n_i+4}^{-\alpha+1}}{K_{2n_i+2}^{-\alpha+1}-K_{2n_i+3}^{-\alpha+1}} 
\Big\}
= 0.\label{condi K-1}
\end{align}
If a sequence $\{ a_i\}_i\subset \R^+$ satisfies 
\begin{align*}
\lim_{i\to \infty} \frac{ \sqrt{S_{a_i,n_i+1}^{-\alpha + 1}
- T_{a_i,n_i+1}^{-\alpha + 1}}}{ T_{a_i,n_i} - S_{a_i,n_i} } 
&= \theta >0,\\ 
\lim_{i\to \infty} S_{a_i,n_i+1}^{-\alpha} 
( S_{a_i,n_i+1}^{-\alpha+1} - T_{a_i,n_i+1}^{-\alpha+1} )^\frac{-1}{2} 
&= \lim_{i\to \infty} T_{a_i,n_i}^\alpha (T_{a_i,n_i} -S_{a_i,n_i}) = 0, 
\end{align*}
then $\{(X,a_ig_\Lambda,p)\}_n\xrightarrow{GH}
(\R^3,(\frac{1}{\alpha-1} + \frac{1}{\theta|\zeta|})h_0,0)$. 
\end{thm}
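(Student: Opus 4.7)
}
The strategy is to realize this as a direct application of Theorem \ref{key} with limit function
\[
\Phi_\infty(\zeta) = \frac{1}{\alpha-1} + \frac{1}{\theta|\zeta|}.
\]
To do so, for each $i$ I would set $P_i^{1/(1+\alpha)} := \sqrt{S_{a_i,n_i+1}^{-\alpha+1} - T_{a_i,n_i+1}^{-\alpha+1}}$ and $\theta_i := P_i^{1/(1+\alpha)}/(T_{a_i,n_i} - S_{a_i,n_i})$, so by hypothesis $\theta_i \to \theta > 0$ and in particular $\theta_i$ stays in a fixed compact subset of $\R^+$. With this normalization, the ``two-active-block'' picture of Section \ref{sec conv3} applies: the block of $\Lambda$ with index $n_i$ is in the regime of Proposition \ref{a3forPsi2} (close to the axis, producing the $\frac{1}{\theta|\zeta|}$ contribution), while the block with index $n_i+1$ is in the regime of Proposition \ref{a3forPsi} (far on the axis, producing the constant $\frac{1}{\alpha-1}$).

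For \textbf{(A3)}, I would combine three triangle-inequality steps on $K(R,D)$ with $0<D\le 1$ and $R\ge 1$ fixed. First, Proposition \ref{conv conf flat} bounds $|\Phi_{a_i} - \Phi_{S'_{n_i},P_i}^{T'_{n_i}} - \Phi_{S'_{n_i+1},P_i}^{T'_{n_i+1}}|$ by $C_\alpha \varepsilon_{a_i,n_i}/D$, with $\varepsilon_{a_i,n_i}\to 0$ by hypothesis \eqref{condi K-1}. Next, Proposition \ref{a3forPsi2} (applied with $\theta=\theta_i$, $S=S_{a_i,n_i}$, $T=T_{a_i,n_i}$) controls $|\Phi_{S'_{n_i},P_i}^{T'_{n_i}} - \frac{1}{\theta_i|\zeta|}|$ by $(1+\theta_i T_{a_i,n_i}^\alpha(T_{a_i,n_i}-S_{a_i,n_i}))\,T_{a_i,n_i}^\alpha(T_{a_i,n_i}-S_{a_i,n_i})/D^3$, which tends to $0$ under the third hypothesis. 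Finally, Proposition \ref{a3forPsi} controls $|\Phi_{S'_{n_i+1},P_i}^{T'_{n_i+1}} - \frac{1}{\alpha-1}|$ by $CR/(S_{a_i,n_i+1}^\alpha\sqrt{S_{a_i,n_i+1}^{-\alpha+1}-T_{a_i,n_i+1}^{-\alpha+1}})$, which tends to $0$ by the second hypothesis, and the assumption $\theta S^\alpha\sqrt{\cdots}\ge 2R$ is eventually satisfied for any fixed $R$. Replacing $\theta_i$ by $\theta$ in $\frac{1}{\theta_i|\zeta|}$ incurs an error bounded by $|\theta_i^{-1}-\theta^{-1}|/D$, which again goes to $0$. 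The composite estimate yields \textbf{(A3)} with $m=3$ and $\varepsilon_i(R)\to 0$ as $i\to\infty$ for every fixed $R$.

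For \textbf{(A4)} both $\Phi_{a_i}$ and $\Phi_\infty$ are manifestly $S^1$-invariant and monotone in $|\zeta_\C|$. For \textbf{(A5)}, $\Phi_\infty\ge \frac{1}{\alpha-1}$ gives a constant lower bound, and for $\Phi_{a_i}$ I invoke inequality \eqref{est2} of Proposition \ref{lower1} together with $\liminf_i \sum_l A_{S'_l,P_i}^{T'_l} > 0$ (the $n_i$-block alone contributes $\ge 1/(\theta_i(1+\theta_i T_{a_i,n_i}^\alpha(T_{a_i,n_i}-S_{a_i,n_i})))\to 1/\theta > 0$ by Proposition \ref{C_0 C_1}). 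For \textbf{(A6)}, since $\frac{1}{\alpha-1}+\frac{1}{\theta|\zeta|}\le \frac{u}{(\alpha-1)|\zeta_\C|}+\frac{1}{\theta|\zeta_\C|}$ on $|\zeta|\le u$, the assumption holds with $\kappa=1$ and a $\theta$-dependent $C_1$. All of these constants can be chosen uniformly in $i$ since $\theta_i\in[\theta/2,2\theta]$ eventually. Finally, feeding $(C_0,C_1,m,\kappa,\varepsilon_i(R))$ into Theorem \ref{key} delivers the claimed pointed Gromov--Hausdorff convergence.

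The main obstacle is bookkeeping rather than any single estimate: one must verify that the constants $C_0,C_1,m,\kappa$ produced by the three propositions above (each of which was designed for a single-block situation) can be unified into one quadruple that works for the two-block limit function $\Phi_\infty$ and is independent of both $i$ and $R$. The delicate point is that the region where Proposition \ref{a3forPsi} is effective (``$S^\alpha\sqrt{\cdots}\ge 2R$'') depends on $R$, so one must first fix $R$, then select $i$ large enough that all three block-level estimates are simultaneously active, and check that the resulting $\varepsilon_i(R)$ still tends to $0$ as $i\to\infty$ with $R$ held fixed; the hypotheses of the theorem are tailored exactly so that this is possible.
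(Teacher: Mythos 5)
Your proposal is correct and follows essentially the same route as the paper: the paper likewise sets $P^{1/(1+\alpha)}=\theta(T_{a,n}-S_{a,n})=\sqrt{S_{a,n+1}^{-\alpha+1}-T_{a,n+1}^{-\alpha+1}}$, splits $\Phi_a$ into the two active blocks via Proposition \ref{conv conf flat}, applies Propositions \ref{a3forPsi2} and \ref{a3forPsi} to the $n$-th and $(n+1)$-st blocks respectively, and takes the constants in {\bf (A3-6)} uniformly for $\theta$ in a compact subset of $\R^+$ before invoking Theorem \ref{key}. Your explicit treatment of $\theta_i\to\theta$ with the $|\theta_i^{-1}-\theta^{-1}|/D$ correction is a minor (and slightly more careful) variant of the paper's bookkeeping, not a different method.
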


Next we estimate $\Phi_a - \frac{1}{\alpha - 1}$ in the same situation, 
as $\theta\to\infty$. 
We have 
\begin{align*}
|\Phi_a -  \Phi_{S'_{n+1},P}^{T'_{n+1}}| 
&\le \frac{2(\frac{a}{P})^\frac{1}{1+\alpha} +P^\frac{-1}{1+\alpha}T_{a,n-1} 
+ P^\frac{-1}{1+\alpha}(T_{a,n} - S_{a,n})}{D} 
+ \frac{2S_{a,n+2}^{-\alpha + 1}}{P^\frac{2}{1+\alpha}(\alpha -1)}\\ 
&\le \frac{C_\alpha}{D}
\Big\{
\frac{1+K_{2n-1}}{\theta( K_{2n+1} - K_{2n})} + \frac{1}{\theta}
+ \frac{K_{2n+4}^{-\alpha+1}}
{K_{2n+2}^{-\alpha+1}-K_{2n+3}^{-\alpha+1}}
\Big\}. 
\end{align*}
Applying Propositions \ref{a3forPsi} and \ref{C_0 C_1'} 
with $\theta = 1$ and \eqref{est2}, we have 
\begin{align*}
\Big|\Phi_a -  \frac{1}{\alpha - 1}\Big| 
&\le \frac{C_\alpha}{D}
\Bigg\{
\frac{1+K_{2n-1}}{\theta( K_{2n+1} - K_{2n})} + \frac{1}{\theta}
+ \frac{K_{2n+4}^{-\alpha+1}}
{K_{2n+2}^{-\alpha+1}-K_{2n+3}^{-\alpha+1}}\\
&\quad \quad \quad \quad + \frac{R}{S_{a,n+1}^\alpha 
\sqrt{S_{a,n+1}^{-\alpha+1}-T_{a,n+1}^{-\alpha+1}}}\Bigg\},\\
\Phi_a &\ge \Big( A_{S'_{n+1},P}^{T'_{n+1}} 
- \frac{2}{\theta( K_{2n+1} - K_{2n})}\Big)\min\Big\{ \frac{1}{|\zeta|},1\Big\}\\
&\ge \Big( \frac{1}{2(\alpha -1)}
- \frac{2}{\theta( K_{2n+1} - K_{2n})}\Big)\min\Big\{ \frac{1}{|\zeta|},1\Big\}\\
\end{align*}
if $D\le 1$, $R\ge 1$ and $|\zeta|\le R$.  
Therefore, we can take 
$C_0,C_1,\kappa,m$ in {\bf (A3-6)} depending only on 
$\alpha, R$ if $\varepsilon \to 0$, where $\Phi=\Phi_a, \Phi_\infty =\frac{1}{\alpha-1}$,
hence we have the following theorem.

\begin{thm}\label{main3}
Let $(X,g_\Lambda)$ be as in Section \ref{sec const} and  
suppose $\{ K_{n_i}\}_i$ satisfies \eqref{condi K-1}. 
If a sequence $\{ a_i\}_i\subset \R^+$ satisfies 
\begin{align*}
\lim_{i\to \infty} \frac{ \sqrt{S_{a_i,n_i+1}^{-\alpha + 1}
- T_{a_i,n_i+1}^{-\alpha + 1}} }{ T_{a_i,n_i} - S_{a_i,n_i} } 
&= \infty,\\ 
\lim_{i\to \infty} S_{a_i,n_i+1}^{-\alpha} 
( S_{a_i,n_i+1}^{-\alpha+1} - T_{a_i,n_i+1}^{-\alpha+1} )^\frac{-1}{2}
&= 0, 
\end{align*}
then $\{(X,a_ig_\Lambda,p)\}_n\xrightarrow{GH}
(\R^3,h_0,0)$. 
\end{thm}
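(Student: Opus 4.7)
The plan is to apply Theorem~\ref{key} with the limit potential $\Phi_\infty \equiv \frac{1}{\alpha-1}$, whose induced metric is a positive multiple of $h_0$ and hence isometric as a pointed metric space to $(\R^3, h_0, 0)$. I must verify that $\Phi = \Phi_{a_i}$ and this $\Phi_\infty$ satisfy {\bf (A3-6)} for constants $C_0, C_1, m, \kappa$ independent of $i \in \N$ and $R \ge 1$, together with $\varepsilon_i(R) \to 0$, $a_i/P_i \to 0$, and $\liminf_i \sum_{l} A^{T_{i,l}}_{S_{i,l},P_i} > 0$, where I set $P_i^{1/(1+\alpha)} := \sqrt{S_{a_i,n_i+1}^{-\alpha+1} - T_{a_i,n_i+1}^{-\alpha+1}}$, so that by the first hypothesis this equals $\theta_i (T_{a_i,n_i} - S_{a_i,n_i})$ with $\theta_i \to \infty$.

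The key estimate for {\bf (A3)} with $m = 1$ is exactly the inequality derived in the paragraph preceding the theorem statement:
\begin{align*}
\Big|\Phi_{a_i}(\zeta) - \tfrac{1}{\alpha-1}\Big| &\le \frac{C_\alpha}{D}\Big\{\frac{1+K_{2n_i-1}}{\theta_i(K_{2n_i+1}-K_{2n_i})} + \frac{1}{\theta_i} \\
&\quad + \frac{K_{2n_i+4}^{-\alpha+1}}{K_{2n_i+2}^{-\alpha+1}-K_{2n_i+3}^{-\alpha+1}} + \frac{R}{S_{a_i,n_i+1}^\alpha\sqrt{S_{a_i,n_i+1}^{-\alpha+1}-T_{a_i,n_i+1}^{-\alpha+1}}}\Big\}
\end{align*}
on $K(R,D)$ with $D \le 1$. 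This is obtained by applying Proposition~\ref{conv conf flat} to reduce $\Phi_{a_i}$ to its two central summands $\Phi^{T'_{n_i}}_{S'_{n_i},P_i} + \Phi^{T'_{n_i+1}}_{S'_{n_i+1},P_i}$, using Proposition~\ref{a3forPsi} with $\theta = 1$ to approximate the $(n_i+1)$-summand by $\frac{1}{\alpha-1}$, and using the upper bound $\Phi^{T'_{n_i}}_{S'_{n_i},P_i}(\zeta) \le \frac{1}{\theta_i|\zeta_\C|} \le \frac{1}{\theta_i D}$ from Proposition~\ref{C_0 C_1}. Each braced term vanishes as $i \to \infty$ by the hypotheses: the first two by $\theta_i \to \infty$ together with \eqref{condi K-1}, the third directly by \eqref{condi K-1}, and the fourth by the second hypothesis on $\{a_i\}$.

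For the remaining assumptions, {\bf (A4)} is immediate since $\Phi_{a_i}$ depends only on $\zeta_\R$ and $|\zeta_\C|$ and is monotone in $|\zeta_\C|$, while $\Phi_\infty$ is constant; {\bf (A6)} holds with $\kappa = 1$ and $C_1 = \frac{1}{\alpha-1}$ because $|\zeta_\C| \le |\zeta| \le u$ whenever $\zeta \in \R^3 \setminus \mathbf{l}$ with $|\zeta| \le u$ and $u \ge 1$; for {\bf (A5)} and the $\liminf$ condition, I combine the lower bound \eqref{est2} with Proposition~\ref{C_0 C_1'} applied at $\theta = 1$, $S = S_{a_i,n_i+1}$, $T = T_{a_i,n_i+1}$ to get $A^{T'_{n_i+1}}_{S'_{n_i+1},P_i} \ge \frac{1}{2(\alpha-1)}$, noting that $(a_i/P_i)^{1/(1+\alpha)} = 1/(\theta_i(K_{2n_i+1}-K_{2n_i})) \to 0$; this yields a uniform $C_0 > 0$ and simultaneously $a_i/P_i \to 0$.

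The main subtlety is ensuring that $C_0, C_1, \kappa, m$ can be taken independently of $i$: this requires a delicate calibration of $P_i$ so that, on the one hand, the $n_i$-summand is pushed to zero (through $\theta_i \to \infty$), while, on the other hand, the $(n_i+1)$-summand remains precisely calibrated to the constant $\frac{1}{\alpha-1}$ (corresponding to $\theta = 1$ in Proposition~\ref{a3forPsi}). The tail summands $\sum_{l \neq n_i, n_i+1} \Phi^{T'_l}_{S'_l,P_i}$ are controlled by \eqref{est4}--\eqref{est5} in Proposition~\ref{lower1}, contributing the $\varepsilon_{a_i,n_i}$ terms appearing above. Once these uniform bounds are in place, Theorem~\ref{key} produces the pointed Gromov-Hausdorff convergence to $(\R^3, d_\infty, 0)$ with $d_\infty$ the distance induced by $\frac{1}{\alpha-1} h_0$, which is isometric to $(\R^3, h_0, 0)$.
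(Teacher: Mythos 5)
Your proposal is correct and follows essentially the same route as the paper: the paper likewise sets $P^{\frac{1}{1+\alpha}}=\theta(T_{a,n}-S_{a,n})=\sqrt{S_{a,n+1}^{-\alpha+1}-T_{a,n+1}^{-\alpha+1}}$ with $\theta\to\infty$, absorbs the $n$-th summand into the error via the $\frac{1}{\theta D}$ bound, calibrates the $(n+1)$-st summand to $\frac{1}{\alpha-1}$ by Proposition \ref{a3forPsi} with $\theta=1$, and verifies \textbf{(A3--6)} with uniform $C_0,C_1,\kappa,m$ before invoking Theorem \ref{key}. The only cosmetic slip is the chain $\frac{1}{\theta_i|\zeta_\C|}\le\frac{1}{\theta_i D}$, which fails on the part of $K(R,D)$ with $\zeta_\R<0$ where $|\zeta_\C|$ may be small; the intended bound $\Phi_{S'_{n_i},P_i}^{T'_{n_i}}\le\frac{1}{\theta_i D}$ on $K(R,D)$ still holds directly because $|\zeta-P(x^\alpha,0,0)|\ge\inf_{y\in\mathbf{l}}|\zeta-y|\ge D$ there.
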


By the similar argument, we have the following. 

\begin{thm}\label{main4}
Let $(X,g_\Lambda)$ be as in Section \ref{sec const} and  
suppose $\{ K_{n_i}\}_i$ satisfies \eqref{condi K-1}. 
If a sequence $\{ a_i\}_i\subset \R^+$ satisfies 
\begin{align*}
\lim_{i\to \infty} \frac{ \sqrt{S_{a_i,n_i+1}^{-\alpha + 1}
- T_{a_i,n_i+1}^{-\alpha + 1}} } { T_{a_i,n_i} - S_{a_i,n_i} } 
&= 0,\\ 
\lim_{i\to \infty} S_{a_i,n_i+1}^{-\alpha} 
( S_{a_i,n_i+1}^{-\alpha+1} - T_{a_i,n_i+1}^{-\alpha+1} )^\frac{-1}{2} 
&= \lim_{i\to \infty} T_{a_i,n_i}^\alpha (T_{a_i,n_i} -S_{a_i,n_i}) = 0, 
\end{align*}
then $\{(X,a_ig_\Lambda,p)\}_n\xrightarrow{GH}
(\R^3,\frac{1}{|\zeta|}h_0,0)$. 
\end{thm}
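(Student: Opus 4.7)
The strategy parallels the proof of Theorem~\ref{main3}: I will reduce the pointed Gromov--Hausdorff convergence to the uniform convergence of the potentials $\Phi_{a_i}$ to a limiting potential $\Phi_\infty(\zeta) = 1/|\zeta|$ on the compact sets $K(R,D)$, verify the uniform hypotheses \textbf{(A3--6)}, and then invoke Theorem~\ref{key}. Compared to Theorem~\ref{main3}, the roles of the near and far intervals are interchanged: here the near interval $[S_{a_i,n_i},T_{a_i,n_i}]$ should dominate and produce the $1/|\zeta|$ contribution, while the far interval must vanish. Accordingly, I abandon the symmetric scaling of Section~\ref{sec conv3} and instead fix $P^{1/(1+\alpha)} = T_{a_i,n_i} - S_{a_i,n_i}$, which sets the near interval's $\theta$ in the sense of Proposition~\ref{a3forPsi2} equal to $1$, and sends the far interval's parameter $\theta' = (T_{a_i,n_i}-S_{a_i,n_i})/\sqrt{S_{a_i,n_i+1}^{-\alpha+1}-T_{a_i,n_i+1}^{-\alpha+1}}$ in the sense of Proposition~\ref{a3forPsi} to infinity by the first hypothesis.

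With this choice, $\bigl|\Phi_{a_i}(\zeta) - 1/|\zeta|\bigr|$ is estimated on $K(R,D)$ by a three-term triangle inequality. The decomposition error $|\Phi_{a_i} - \Phi_{S'_{n_i},P}^{T'_{n_i}} - \Phi_{S'_{n_i+1},P}^{T'_{n_i+1}}|$ is controlled by Propositions~\ref{conv1} and~\ref{lower1}, in the spirit of Proposition~\ref{conv conf flat} but with the present asymmetric $P$; hypothesis~\eqref{condi K-1} together with the quantitative constraints on $a_i$ force each resulting term to vanish. The near interval error $\bigl|\Phi_{S'_{n_i},P}^{T'_{n_i}} - 1/|\zeta|\bigr|$ is bounded by the cubic-$D$ inequality of Proposition~\ref{a3forPsi2}, of order $T_{a_i,n_i}^\alpha(T_{a_i,n_i}-S_{a_i,n_i})/D^3$, which vanishes by the third hypothesis. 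The far interval term $\Phi_{S'_{n_i+1},P}^{T'_{n_i+1}}$ is bounded via Proposition~\ref{a3forPsi} by $|\Phi_{S'_{n_i+1},P}^{T'_{n_i+1}} - 1/(\theta'^2(\alpha-1))| + 1/(\theta'^2(\alpha-1))$; the first summand vanishes once one checks $\theta' S_{a_i,n_i+1}^\alpha\sqrt{S_{a_i,n_i+1}^{-\alpha+1}-T_{a_i,n_i+1}^{-\alpha+1}} = S_{a_i,n_i+1}^\alpha(T_{a_i,n_i}-S_{a_i,n_i}) \to \infty$, and the second vanishes since $\theta' \to \infty$.

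To verify \textbf{(A3--6)} with constants independent of $i$: \textbf{(A4)} is immediate from the axial symmetry of both $\Phi_{a_i}$ and $1/|\zeta|$ and the monotonicity in $|\zeta_\C|$. For \textbf{(A6)}, $\Phi_\infty(\zeta) = 1/|\zeta| \le 1/|\zeta_\C|$, so one may take $C_1 = 1$ and $\kappa = 0$. For \textbf{(A5)}, Proposition~\ref{lower1}\eqref{est2} combined with the lower bound $A_{S'_{n_i},P}^{T'_{n_i}} \ge 1/(1 + T_{a_i,n_i}^\alpha(T_{a_i,n_i}-S_{a_i,n_i}))$ from Proposition~\ref{C_0 C_1} yields $\Phi_{a_i} \ge C_0 \min\{1/|\zeta|, 1\}$ for a uniform $C_0$ (e.g.\ $C_0=1/4$) once $i$ is large. \textbf{(A3)} then packages the three bounds of the previous paragraph with $m=3$. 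An application of Theorem~\ref{key} produces the claimed limit.

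The main obstacle is arithmetic rather than geometric: verifying that the asymmetric choice of $P$ still meets the hypothesis $P^{1/(\alpha(1+\alpha))}S_{a_i,n_i+2} \ge (2R)^{1/\alpha}$ of Proposition~\ref{conv conf flat}, which translates to $a_i(K_{2n_i+1} - K_{2n_i})K_{2n_i+2}^\alpha \to \infty$. This is not stated explicitly but follows from the hypotheses by a short chase: the second hypothesis rearranges to $a_i K_{2n_i+2}^{2\alpha}(K_{2n_i+2}^{-\alpha+1} - K_{2n_i+3}^{-\alpha+1}) \to \infty$, the first to $a_i(K_{2n_i+1} - K_{2n_i})^2 \gg K_{2n_i+2}^{-\alpha+1} - K_{2n_i+3}^{-\alpha+1}$, and multiplying and extracting a square root gives the required divergence. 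This algebraic reconciliation, together with checking that every residual error in the three-part decomposition vanishes at a uniform rate, is the technical heart of the argument; the analytic content reduces entirely to the machinery already developed in Sections~\ref{sec dis} and~\ref{sec conv3}.
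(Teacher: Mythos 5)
Your proposal is correct and follows essentially the same route as the paper's proof: the same asymmetric normalization $P^{\frac{1}{1+\alpha}} = T_{a_i,n_i}-S_{a_i,n_i}$, the same near/far splitting with the near interval handled by Proposition \ref{a3forPsi2} at $\theta=1$ and the far interval vanishing as its $\theta$-parameter diverges, and the same reduction to Theorem \ref{key} via \textbf{(A3--6)}. The only cosmetic difference is that you bound the far-interval term through Proposition \ref{a3forPsi} rather than the direct \eqref{est4}-type estimate the paper uses; both rest on the identical condition $S_{a_i,n_i+1}^{\alpha}(T_{a_i,n_i}-S_{a_i,n_i})\to\infty$, which your algebraic chase verifies correctly.
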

\begin{proof}
Put 
\begin{align*}
P^\frac{1}{1+\alpha} &:= ( T_{a,n} - S_{a,n} ) 
= \theta\sqrt{S_{a,n+1}^{-\alpha + 1} - T_{a,n+1}^{-\alpha + 1}}, \\
S'_l &=P^\frac{-1}{1+\alpha} S_{a,l},\quad 
T'_l =P^\frac{-1}{1+\alpha} T_{a,l}. 
\end{align*}
The similar argument to \eqref{est4} gives
\begin{align*}
\Phi_{S'_{n+1},P}^{T'_{n+1}} (\zeta) 
\le \frac{2((S'_{n+1})^{-\alpha +1} - (T'_{n+1})^{-\alpha +1})}{P(\alpha -1)}
\end{align*}
if $P(S'_{n+1})^\alpha \ge 2R$, which is equivalent to 
$\theta S_{a,n+1}^\alpha\sqrt{S_{a,n+1}^{-\alpha+1} - T_{a,n+1}^{-\alpha+1}}
\ge 2R$, 
then the similar argument to Proposition \ref{conv conf flat} gives 
\begin{align*}
|\Phi_a(\zeta) - \Phi_{S'_n,P}^{T'_n} (\zeta)| 
\le \frac{C_\alpha\varepsilon_{a,n}}{D} 
+ \frac{2}{(\alpha-1) \theta},
\end{align*}
for any $\zeta\in K(R,D)$, 
where $\varepsilon_{a,n}$ is the constant defined by 
\begin{align*}
\varepsilon_{a,n} 
= \frac{1+K_{2n-1}}{K_{2n+1} - K_{2n}} 
+ \frac{K_{2n+4}^{-\alpha+1}}
{\theta^2 (K_{2n+2}^{-\alpha+1}-K_{2n+3}^{-\alpha+1})}.
\end{align*}
Moreover, Proposition \ref{a3forPsi2} with $\theta=1$ gives 
\begin{align*}
|\Phi_{S'_n,P}^{T'_n} (\zeta) - \frac{1}{|\zeta|}| &\le \frac{2}{D},\\
|\Phi_{S'_n,P}^{T'_n} (\zeta) - \frac{1}{|\zeta|}| 
&\le \frac{1+T_{a,n}^\alpha (T_{a,n} - S_{a,n})}{D^3}
T_{a,n}^\alpha (T_{a,n} - S_{a,n}). 
\end{align*} 
Then we can see $|\Phi_a - \frac{1}{|\zeta|}| \le \frac{\varepsilon}{D^3}$ 
for some $\varepsilon>0$ if $D\le 1$ and $\zeta\in K(R,D)$. 
Here, $\varepsilon$ goes to $0$ as 
$\varepsilon_{a,n} \to 0$, $\theta\to \infty$, 
$S_{a,n+1}^\alpha\sqrt{S_{a,n+1}^{-\alpha+1} 
- T_{a,n+1}^{-\alpha+1}}\to \infty$ 
and $T_{a,n}^\alpha (T_{a,n} - S_{a,n}) \to 0$. 
Since one can take $C_0,C_1,m,\kappa$ in {\bf (A3-6)} depending only on 
$\alpha$ if $\varepsilon$ is sufficiently small, 
by Proposition \ref{C_0 C_1} with $\theta=1$ and \eqref{est2}. 
\end{proof}

\subsection{Example (1)}\label{sec ex1}
Let $\Lambda$ be as in Section \ref{sec const}. 
Moreover we take and increasing sequence $\{ K_n\}_n$ 
such that 
\begin{align*}
\lim_{n\to \infty}\frac{K_{n}}{K_{n-1}} =\infty. 
\end{align*}
In this situation, we observe which pointed metric spaces 
can be contained in $\mathcal{T}(X, g_\Lambda)$ and 
prove Theorem \ref{main result}. 

Take $S>0$ and put 
$a_i:=K_{2i}^{-1-\alpha}S^{1+\alpha}$. 
Then we have $a_i^{\frac{1}{1+\alpha}}K_{2i} = S$ 
and $\lim_i a_i^{\frac{1}{1+\alpha}}K_{2i+1} =\infty$. 
Hence Theorem \ref{main1} implies that 
$(X,a_ig_\Lambda,p)\xrightarrow{GH} (\R^3,d_S^\infty,0)$. 
Similarly, if we take 
$a_i:=K_{2i+1}^{-1-\alpha}T^{1+\alpha}$ for $T>0$ then 
we obtain $(\R^3,d_0^T,0)$ as the pointed Gromov-Hausdorff limit. 

Next we fix $\theta>0$ and put 
$a_i=\theta^{-1}K_{2i+1}^{-2}K_{2i+2}^{-\alpha + 1}$. 
Then one can check that the assumptions of 
Theorem \ref{main2} is satisfied, hence one obtain 
$(\R^3,(\frac{1}{\alpha-1} + \frac{1}{\theta|\zeta|})h_0,0)$ 
as the pointed Gromov-Hausdorff limit. 
By taking the limit $\theta\to 0$ or $\theta\to \infty$, 
we obtain 
$(\R^3,h_0,0)$ and $(\R^3,\frac{1}{|\zeta|}h_0,0)$ 
as the pointed Gromov-Hausdorff limit.

In fact, we obtain the next result. 

\begin{thm}\label{ex1}
Let $\Lambda,\{ K_n\}_n$ satisfy 
$\lim_{n\to \infty}\frac{K_{n}}{K_{n-1}} =\infty$. 
Then $\mathcal{T}(X, g_\Lambda)$ is equal to 
the closure of 
\begin{align*}
\{ (\R^3, s d_1^\infty,0); s>0 \}\cup 
\{ (\R^3, s d_0^1,0); s>0 \} \cup 
\{ (\R^3, s \Big( 1+\frac{1}{|\zeta|} \Big) h_0,0); s>0 \}
\end{align*}
with respect to the Gromov-Hausdorff topology. 
Moreover we have 
\begin{align*}
\lim_{s\to \infty} (\R^3, s d_1^\infty,0) 
&= \lim_{s\to 0} \Big(\R^3, s \Big( 1+\frac{1}{|\zeta|} \Big) h_0,0\Big)
= (\R^3,h_0,0),\\
\lim_{s\to 0}  (\R^3, s d_0^1,0)
&= \lim_{s\to \infty } \Big(\R^3, s \Big( 1+\frac{1}{|\zeta|} \Big),0\Big) 
= \Big(\R^3,\frac{1}{|\zeta|}h_0,0\Big),\\
\lim_{s\to 0} (\R^3, s d_1^\infty,0) 
&= \lim_{s\to \infty}  (\R^3, s d_0^1,0) 
= (\R^3, d_0^\infty,0).
\end{align*}
\end{thm}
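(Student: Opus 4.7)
The plan is to show the two inclusions separately and then verify the stated boundary identifications. For the easy direction, the three families are realized as genuine tangent cones by explicit sequences via Theorems \ref{main1} and \ref{main2}, after which their closure lies in $\mathcal{T}(X,g_\Lambda)$ because the latter is closed in the pointed Gromov--Hausdorff topology. Specifically, taking $a_i := K_{2i}^{-(1+\alpha)}$ with $n_i = i$ gives $a_i^{1/(1+\alpha)} K_{2n_i} = 1$ and $a_i^{1/(1+\alpha)} K_{2n_i+1} = K_{2i+1}/K_{2i} \to \infty$, so Theorem \ref{main1} produces $(\R^3, d_1^\infty, 0)$; the family $\{s d_1^\infty : s>0\}$ then lies in $\mathcal{T}(X,g_\Lambda)$ by $\R^+$-invariance of the tangent-cone set. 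An analogous choice $a_i := K_{2i+1}^{-(1+\alpha)}$ produces $(\R^3, d_0^1, 0)$. For the third family, a direct computation shows that each $s(1 + 1/|\zeta|)h_0$ is isometric to some $(\frac{1}{\alpha-1} + \frac{1}{\theta|\zeta|})h_0$ via an affine dilation of $\R^3$ composed with a metric rescaling (pick $\theta > 0$ and the scale so that the two coefficient ratios match); then Theorem \ref{main2} applied with $a_i := \theta^{-1} K_{2i+1}^{-2} K_{2i+2}^{-\alpha+1}$ and $n_i = i$ realises that space, the hypothesis $K_{n+1}/K_n \to \infty$ being strong enough to imply \eqref{condi K-1} and the remaining limit conditions.

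The six boundary identifications at the ends of the three arcs can be verified either by direct Gromov--Hausdorff estimates on the explicit metrics or by re-applying Theorems \ref{main3}--\ref{main4} with well-chosen sequences. Together with closedness, this gives one inclusion and also shows that the union of the three families, together with the three fixed points $(\R^3, h_0, 0)$, $(\R^3, h_1, 0)$, $(\R^3, d_0^\infty, 0)$, is topologically a circle with three distinguished fixed points of the $\R^+$-action.

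The reverse inclusion is the substantive direction. Given any sequence $a_i \downarrow 0$, by the pointed Gromov--Hausdorff compactness theorem for nonnegative Ricci I may pass to a subsequence so that $(X, a_i g_\Lambda, p)$ converges. For each $i$ I would choose an index $n_i$ adapted to the scale of $a_i$, namely the unique $n$ with $a_i^{1/(1+\alpha)} K_{2n} \le 1 < a_i^{1/(1+\alpha)} K_{2n+2}$, and then pass to a further subsequence so that the four quantities $S_{a_i,n_i}$, $T_{a_i,n_i}$, $S_{a_i,n_i+1}$, $T_{a_i,n_i+1}$ converge in $[0, \infty]$. The growth assumption $K_{n+1}/K_n \to \infty$ is critical: it forces that whenever $S_{a_i,n_i}$ has a positive finite limit, $T_{a_i,n_i}$ must diverge, and symmetrically. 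Hence only the degenerate endpoints of Theorem \ref{main1} can arise, giving members of the first two families or the shared endpoint $d_0^\infty$. In the complementary situation, where $T_{a_i,n_i} \to 0$ and $S_{a_i,n_i+1} \to \infty$, the relevant scale lies in the gap between two consecutive lattice clusters $\Lambda_{2n_i}, \Lambda_{2n_i+2}$; depending on which of the limit ratios in Theorems \ref{main2}--\ref{main4} the sequence realises, the limit is either $(\R^3, (\frac{1}{\alpha-1} + \frac{1}{\theta|\zeta|})h_0, 0)$ with some $\theta \in \R^+$, or one of the boundary points $(\R^3, h_0, 0)$, $(\R^3, \frac{1}{|\zeta|}h_0, 0)$.

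The main obstacle is precisely the reverse inclusion, and within it the step of showing that the adapted index $n_i$ really captures all of the local geometry on any fixed ball $B(0,r)$ in the limit. Technically this reduces to uniform estimates of the form $\sum_{n \ne n_i, n_i+1} \Phi_{S_{a_i,n},P_i}^{T_{a_i,n}}(\zeta) \to 0$ on bounded regions, which in turn follow from the decay estimates \eqref{est4} and \eqref{est5} of Proposition \ref{lower1} combined with the superexponential growth $K_{n+1}/K_n \to \infty$; the latter makes the contribution of every cluster other than $\Lambda_{2n_i}$ and $\Lambda_{2n_i+2}$ negligible uniformly in $i$. Once this control is in place, the case analysis above is exhaustive and each case is covered by one of Theorems \ref{main1}--\ref{main4}, yielding the desired closure characterisation and the stated boundary relations.
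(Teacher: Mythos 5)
Your proposal is correct and follows essentially the same route as the paper: realize the three families via Theorems \ref{main1} and \ref{main2} with explicit sequences and invoke closedness of $\mathcal{T}(X,g_\Lambda)$ for one inclusion, then for the reverse inclusion locate the rescaled clusters relative to unit scale, pass to subsequences so that $S_{a_i,n_i}, T_{a_i,n_i}, S_{a_i,n_i+1}, T_{a_i,n_i+1}$ converge in $[0,\infty]$, and exhaust the cases with Theorems \ref{main1}--\ref{main4}. The only difference is cosmetic: the paper organizes the dichotomy by whether some $a_i^{1/(1+\alpha)}K_n$ stays in a compact interval $[M^{-1},M]$, whereas you fix an adapted index $n_i$ by the condition $a_i^{1/(1+\alpha)}K_{2n_i}\le 1 < a_i^{1/(1+\alpha)}K_{2n_i+2}$; these lead to the same case analysis.
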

\begin{proof}
We have already shown that the pointed metric spaces in the 
above list are contained in $\mathcal{T}(X, g_\Lambda)$. 
Accordingly, what we have to show is that 
any other pointed metric spaces may not arise as 
the tangent cone at infinity of $(X,g_\Lambda)$.

Suppose that a sequence $\{ a_i\}_i \subset \R^+$ is 
given such that 
$(X,a_ig_\Lambda,p)\xrightarrow{GH} (Y,d,q)$ as $i\to \infty$. 
It suffices to show that $(Y,d,q)$ is one of the metric spaces 
in the list. 

First of all, we may assume that for any large $M>0$ 
there exists $i(M)$ such that 
\begin{align*}
\{ a_i^\frac{1}{1+\alpha}K_n \in \R^+; n\in\N\} 
\cap [M^{-1},M]
\end{align*}
is empty for any $i\ge i(M)$. 
If not, there is $M>0$ and a map $i\mapsto n_i$ such that 
$M^{-1} \le a_i^\frac{1}{1+\alpha}K_{n_i} \le M$ holds 
for infinitely many $i$. 
Then by taking subsequence $\{ a_{i_j}\} \subset \{ a_i\}_i$, 
we may suppose $M^{-1} \le a_{i_j}^\frac{1}{1+\alpha}K_{2n_{i_j}} \le M$ holds 
for any $j$ or $M^{-1} \le a_{i_j}^\frac{1}{1+\alpha}K_{2n_{i_j}+1} \le M$ holds 
for any $j$. 
If the former case holds, then by replacing by subsequence 
we may suppose  
\begin{align*}
\lim_{i\to \infty}a_i^\frac{1}{1+\alpha}K_{2n_i} 
&=S\in [M^{-1},M], \\
\lim_{i\to \infty}a_i^\frac{1}{1+\alpha}K_{2n_i+1} 
&= S\lim_{i\to \infty}\frac{K_{2n_i+1}}{K_{2n_i}}=\infty, 
\end{align*}
and we can apply Theorem \ref{main1} hence obtain 
$(Y,d,q) = (\R^3, d_S^\infty,0)$. 
If the latter case holds, then we have $(Y,d,q) = (\R^3, d_0^T,0)$ 
for some $T>0$.

Now, we may suppose that there exists $l_i\in\N$ for each $i$ 
such that $\lim_{i\to \infty}a_i^\frac{1}{1+\alpha}K_{l_i} = 0$ 
and $\lim_{i\to \infty}a_i^\frac{1}{1+\alpha}K_{l_i+1} = \infty$ hold. 
If $\{ i\in \N; l_i\ {\rm is\ even}.\}$ is an infinite set, 
then we can apply Theorem \ref{main1} again and obtain 
$(Y,d,q) = (\R^3, d_0^\infty,0)$. 
Therefore, replacing by subsequence, we may suppose 
\begin{align*}
\lim_{i\to \infty}a_i^\frac{1}{1+\alpha}K_{2n_i+1} = 0,\quad 
\lim_{i\to \infty}a_i^\frac{1}{1+\alpha}K_{2n_i+2} = \infty.
\end{align*}
Now, we have 
\begin{align*}
\sqrt{S_{a,n+1}^{-\alpha+1} - T_{a,n+1}^{-\alpha+1}} 
\ge \frac{S_{a,n+1}^{\frac{-\alpha+1}{2}}}{2},\quad 
T_{a,n}-S_{a,n} \ge \frac{T_{a,n}}{2}
\end{align*}
holds for sufficiently large $n$. 
Hence if  
\begin{align*}
0< \liminf_{i\to \infty}\frac{S_{a_i,n_i+1}^{\frac{1-\alpha}{2}}}{T_{a_i,n_i}}
\le \limsup_{i\to \infty}\frac{S_{a_i,n_i+1}^{\frac{1-\alpha}{2}}}{T_{a_i,n_i}}
<\infty
\end{align*}
holds, then Theorem \ref{main2} can be applied to this situation 
by taking a subsequence, then we obtain 
$(Y,d,q) = (\R^3, \Big( 1+\frac{\theta}{|\zeta|} \Big) h_0,0)$ 
for some $\theta>0$. 
Hence the remaining cases are 
\begin{align*}
\lim_{i\to \infty}\frac{S_{a_i,n_i+1}^{\frac{1-\alpha}{2}}}{T_{a_i,n_i}} = 0\quad
{\rm or} 
\quad \lim_{i\to \infty}\frac{S_{a_i,n_i+1}^{\frac{1-\alpha}{2}}}{T_{a_i,n_i}}
=\infty.
\end{align*}
In both of the cases, we can apply 
Theorems \ref{main3} or \ref{main4}, then obtain 
$(Y,d,q) = (\R^3, h_0,0)$ or $(\R^3, \frac{1}{|\zeta|}h_0,0)$.
\end{proof}

One can also see that there are no 
nontrivial isometries between two pointed metric spaces 
appearing in the list of Theorem \ref{ex1}. 
Here, the isometry of pointed metric spaces means the 
bijective morphism preserving the metrics and the base points. 

\begin{table}[htb]
\begin{center}
    \caption{Tangent cones ($0<S,T,\theta<\infty$)}\label{table1}
 \begin{tabular}{|c||c|c|}\hline
metric & tangent cone at $0$ & tangent cone at $\infty$ \\ \hline \hline
$d_S^T\ (S<T)$ & $h_0$ & $\frac{1}{|\zeta|}h_0$ \\ \hline
$d_S^\infty$ & $h_0$ & $d_0^\infty$ \\ \hline
$d_0^T$ & $d_0^\infty$ & $\frac{1}{|\zeta|}h_0$ \\ \hline
$d_0^\infty$ & $d_0^\infty$ & $d_0^\infty$ \\ \hline
$h_0$ & $h_0$ & $h_0$ \\ \hline
$\frac{1}{|\zeta|}h_0$ & $\frac{1}{|\zeta|}h_0$ & $\frac{1}{|\zeta|}h_0$ \\ \hline
$(1 + \frac{\theta}{|\zeta|})h_0$ & $\frac{1}{|\zeta|}h_0$ & $h_0$ \\ \hline
 \end{tabular}
 \end{center}
\end{table}

Obviously, there is no isometry between $(\R^3,h_0,0)$ 
and $(\R^3,\frac{1}{|\zeta|}h_0,0)$. 
In the next section, we show that $(\R^3,d_0^\infty,0)$ is isometric to 
neither $(\R^3,h_0,0)$ nor $(\R^3,\frac{1}{|\zeta|}h_0,0)$. 

Then, Table \ref{table1} implies that 
the nontrivial 
isometries may exist between 
\begin{align*}
(\R^3,d_S^\infty,0)\ &{\rm and}\ (\R^3,d_{S'}^\infty,0)\ {\rm for}\ S\neq S',\\ 
(\R^3,d_0^T,0)\ &{\rm and}\ (\R^3,d_0^{T'},0)\ {\rm for}\ T\neq T',\\
\Big(\R^3,\Big(1 + \frac{\theta}{|\zeta|}\Big)h_0,0\Big)\ &{\rm and}\ 
\Big(\R^3,\Big(1 + \frac{\theta'}{|\zeta|}\Big)h_0,0\Big)\ {\rm for}\ \theta\neq \theta'.
\end{align*}

Suppose $(\R^3,d_{S}^\infty,0)$ is isometric to $(\R^3,d_{S'}^\infty,0)$ 
for some $S\neq S'$. 
Then the topological space 
$$\{ (\R^3,d_{S}^\infty,0);\ S\in\R^+\}$$ 
with respect to pointed Gromov-Hausdorff topology 
is homeomorphic to $S^1$ or $1$-point, 
hence it is compact. 
Then its closure is itself, 
therefore $(\R^3,h_0,0)$ is isometric to some 
$(\R^3,d_{S}^\infty,0)$, 
which is the contradiction by Table \ref{table1}. 
Similarly, we can show that there are no isometries between 
$(\R^3,d_0^{T},0)$ and $(\R^3,d_0^{T'},0)$, 
and between $(\R^3,(1 + \frac{\theta}{|\zeta|})h_0,0)$ and 
$(\R^3,(1 + \frac{\theta'}{|\zeta|})h_0,0)$.

\subsection{Example (2)}\label{sec ex2}
Next we suppose that $\{ K_n\}_n$ 
satisfies 
\begin{align*}
\lim_{n\to \infty}\frac{K_{2n}}{K_{2n-1}} =\infty,\quad 
\frac{K_{2n+1}}{K_{2n}} =\beta>1. 
\end{align*}
Take $S>0$ and put 
$a_n:=K_{2n}^{-1-\alpha}S^{1+\alpha}$. 
Then we have $a_n^{\frac{1}{1+\alpha}}K_{2n} = S$ 
and $a_n^{\frac{1}{1+\alpha}}K_{2n+1} =\beta S$. 
Hence Theorem \ref{main1} implies that 
$(X,a_ng_\Lambda,p)\xrightarrow{GH} (\R^3,d_S^{\beta S})$. 
By arguing similarly to the proof of Theorem \ref{ex1} 
we obtain the followings. 

\begin{thm}\label{ex2}
Let $\Lambda,\{ K_n\}_n$ satisfy 
\begin{align*}
\lim_{n\to \infty}\frac{K_{2n}}{K_{2n-1}} =\infty,\quad 
\lim_{n\to \infty}\frac{K_{2n+1}}{K_{2n}} =\beta>1.
\end{align*}
Then $\mathcal{T}(X, g_\Lambda)$ is equal to 
the closure of 
\begin{align*}
\{ (\R^3, s d_1^\beta,0); s>0 \}\cup 
\{ (\R^3, s \Big( 1+\frac{1}{|\zeta|} \Big) h_0,0); s>0 \}
\end{align*}
with respect to the Gromov-Hausdorff topology. 
Moreover we have 
\begin{align*}
\lim_{s\to \infty} (\R^3, s d_1^\beta,0) 
&= \lim_{s\to 0} \Big(\R^3, s \Big( 1+\frac{1}{|\zeta|} \Big) h_0,0\Big)
= (\R^3,h_0,0),\\
\lim_{s\to 0}  (\R^3, s d_1^\beta,0)
&= \lim_{s\to \infty } \Big(\R^3, s \Big( 1+\frac{1}{|\zeta|} \Big),0\Big) 
= \Big(\R^3,\frac{1}{|\zeta|}h_0,0\Big). 
\end{align*}
\end{thm}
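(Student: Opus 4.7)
The proof plan is to follow the template of Theorem~\ref{ex1} closely, adjusting to the fact that the bounded intra-cluster ratio $K_{2n+1}/K_{2n}\to\beta<\infty$ replaces the unbounded ratio used there. This change merges the two one-parameter families $\{d_S^\infty\}$ and $\{d_0^T\}$ of Theorem~\ref{ex1} into the single family $\{d_S^{\beta S}\}$, since consecutive even-odd indices $K_{2n},K_{2n+1}$ are now comparable rather than widely separated; the second family $\{(1+\theta/|\zeta|)h_0\}$ still arises from the large inter-cluster gaps, which remain governed by $K_{2n}/K_{2n-1}\to\infty$.

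For the forward direction, to exhibit $(\R^3,sd_1^\beta,0)$ I would set $a_i:=S^{1+\alpha}K_{2i}^{-(1+\alpha)}$, so that $S_{a_i,i}=S$ and $T_{a_i,i}=(K_{2i+1}/K_{2i})S\to\beta S$; the hypothesis $\varepsilon_{a_i,i}\to 0$ of Theorem~\ref{main1} follows at once from $K_{2n-1}/K_{2n}\to 0$ and $K_{2n+2}/K_{2n+1}=K_{2(n+1)}/K_{2(n+1)-1}\to\infty$ (both consequences of our assumption), giving the tangent cone $(\R^3,d_S^{\beta S},0)$, which is identified with $(\R^3,sd_1^\beta,0)$ via a suitable dilation. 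For $(\R^3,s(1+1/|\zeta|)h_0,0)$, for each prescribed $\theta>0$ I would choose $a_i$ landing in the large gap between $K_{2i+1}$ and $K_{2i+2}$ so that $T_{a_i,i}\to 0$, $S_{a_i,i+1}\to\infty$ and $\sqrt{S_{a_i,i+1}^{-\alpha+1}-T_{a_i,i+1}^{-\alpha+1}}/(T_{a_i,i}-S_{a_i,i})\to\theta$. Condition~\eqref{condi K-1} holds since $K_{2n+1}-K_{2n}\sim(\beta-1)K_{2n}$ dominates $K_{2n-1}$, and $K_{2n+4}/K_{2n+2}\to\infty$; Theorem~\ref{main2} then produces $(\R^3,(\frac{1}{\alpha-1}+\frac{1}{\theta|\zeta|})h_0,0)$, which the dilation $\zeta\mapsto((\alpha-1)/\theta)\zeta$ identifies with $(\R^3,s(1+1/|\zeta|)h_0,0)$ for $s=(\alpha-1)/\theta^2$.

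For the closure direction, given any $a_i\to 0$ with $(X,a_ig_\Lambda,p)\xrightarrow{GH}(Y,d,q)$, I would split into two cases following the proof of Theorem~\ref{ex1}. In Case~A, a subsequence satisfies $a_i^{1/(1+\alpha)}K_{n_i}\in[M^{-1},M]$ for some fixed $M$; passing to constant parity and using the bounded ratio $K_{2n+1}/K_{2n}\to\beta$, both $a_i^{1/(1+\alpha)}K_{2m_i}$ and $a_i^{1/(1+\alpha)}K_{2m_i+1}$ have positive finite limits $S$ and $\beta S$, and Theorem~\ref{main1} gives $(Y,d,q)=(\R^3,d_S^{\beta S},0)$. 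In Case~B, no such bounded subsequence exists, so there are indices $l_i$ with $a_i^{1/(1+\alpha)}K_{l_i}\to 0$ and $a_i^{1/(1+\alpha)}K_{l_i+1}\to\infty$; crucially, the bounded ratio $K_{2m+1}/K_{2m}\to\beta$ rules out $l_i$ even (two consecutive values at ratio $\to\beta$ cannot tend simultaneously to $0$ and $\infty$), so $l_i=2m_i+1$. Applying Theorems~\ref{main2}, \ref{main3}, or \ref{main4} according as $\lim_i\sqrt{S_{a_i,m_i+1}^{-\alpha+1}-T_{a_i,m_i+1}^{-\alpha+1}}/(T_{a_i,m_i}-S_{a_i,m_i})$ lies in $(0,\infty)$, equals $\infty$, or equals $0$ yields respectively $(1+\theta/|\zeta|)h_0$, $h_0$, or $(1/|\zeta|)h_0$, each lying in the claimed closure. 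Finally, the four boundary limits of the theorem follow: the two involving $sd_1^\beta$ from Theorems~\ref{limit1} and~\ref{limit2} applied to $(S_i,T_i)=(S_i,\beta S_i)$ with $S_i\to\infty$ or $S_i\to 0$ (so that $S_i^{(\alpha+1)/2}\sqrt{1-\beta^{-\alpha+1}}\to\infty$ or $\beta^\alpha(\beta-1)S_i^{\alpha+1}\to 0$ respectively), and the two involving $s(1+1/|\zeta|)h_0$ from the dominant balance of the $1$ versus $1/|\zeta|$ term at $\infty$ and at $0$.

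The main obstacle is the careful bookkeeping: verifying that the estimate $\varepsilon_{a_i,n_i}\to 0$ underlying Theorem~\ref{main1} and condition~\eqref{condi K-1} underlying Theorems~\ref{main2}--\ref{main4} both remain valid under the weaker hypothesis $K_{2n+1}/K_{2n}\to\beta<\infty$, and that the exclusion of Case~B with $l_i$ even is indeed forced by this bounded ratio. Once these checks are in place, the case analysis proceeds essentially as in the proof of Theorem~\ref{ex1}, and the combination of closedness of $\mathcal{T}(X,g_\Lambda)$ under pointed Gromov--Hausdorff convergence with the two-sided inclusions established above yields the claimed equality.
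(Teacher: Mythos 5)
Your proposal is correct and follows essentially the same route as the paper, which itself proves Theorem \ref{ex2} by the one-line computation $a_n=K_{2n}^{-1-\alpha}S^{1+\alpha}$ followed by ``arguing similarly to the proof of Theorem \ref{ex1}''; your case analysis (bounded window giving $d_S^{\beta S}$, the parity argument excluding $d_0^\infty$, and the trichotomy feeding Theorems \ref{main2}--\ref{main4}) is exactly that adaptation. You are in fact slightly more careful than the paper in noting that the literal hypothesis $K_{2n+1}/K_{2n}\to\infty$ of Theorem \ref{main1} fails here and that what is really needed is $\varepsilon_{a,n}\to 0$ from Proposition \ref{conv8-1}, which you verify correctly.
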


By the similar argument to Section \ref{sec ex1}, 
we can see that 
$(\R^3,d_S^{\beta S},0)$ is isometric to 
neither $(\R^3,h_0,0)$, $(\R^3,\frac{1}{|\zeta|}h_0,0)$ nor 
$(\R^3,d_{S'}^{\beta S'},0)$ for $S'\neq S$.

\subsection{Example (3)}\label{sec ex3}
For $I\subset \R^+$, denote by $d_I$ the metric on $\R^3$ induced by 
\begin{align*}
\int_{x\in I}\frac{dx}{|\zeta - (x^\alpha,0,0)|}\cdot h_0. 
\end{align*}
Denote by $\mathcal{B}_+(\R^+)$ the set consisting of 
all Borel subsets of $\R^+$ of nonzero Lebesgue measure. 
In this subsection we show the next theorem.

\begin{thm}\label{ex3}
There is a sequence $\{ K_n\}_n$ 
such that $\mathcal{T}(X,g_\Lambda)$ contains 
\begin{align*}
\{ (\R^3,d_I,0);\ I\in \mathcal{B}_+(\R^+)\} / \mbox{{\rm isometry}}.
\end{align*}
\end{thm}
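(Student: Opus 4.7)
The plan is to realize $(\R^3, d_J, 0)$ as a tangent cone at infinity for every finite union $J$ of intervals with positive rational endpoints, and then to invoke the closedness of $\mathcal{T}(X,g_\Lambda)$ in the pointed Gromov--Hausdorff topology together with a density argument to cover general $I \in \mathcal{B}_+(\R^+)$. The density step is a stability lemma: if $J_k, I \in \mathcal{B}_+(\R^+)$ satisfy $\int_{J_k \triangle I}\frac{dx}{|\zeta - (x^\alpha,0,0)|} \to 0$ uniformly on each $K(R,D)$, then $(\R^3, d_{J_k}, 0) \xrightarrow{GH}(\R^3, d_I, 0)$; this is proved exactly as in Section \ref{sec dis}, taking $\Phi$ and $\Phi_\infty$ to be the two integrals, for which \textbf{(A3)} is immediate and \textbf{(A4)}--\textbf{(A6)} follow from the form of the integrand. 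By outer regularity of Lebesgue measure and a truncation at large radius (tail controlled by Proposition \ref{lower1} \eqref{est4}), every $I \in \mathcal{B}_+(\R^+)$ is such a limit of rational finite unions. It therefore suffices to produce each rational $d_J$ as a tangent cone of one $(X, g_\Lambda)$.

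To this end I enumerate all finite unions $J = \bigsqcup_{j=1}^{N}[p_j, q_j]$ with positive rational endpoints as $(J^{(\ell)})_{\ell \ge 1}$, arranging the enumeration so that each such $J$ appears for infinitely many indices. Let $M_\ell \in \N$ clear the denominators of the endpoints of $J^{(\ell)}$, set $n_\ell := \sum_{\ell' < \ell} N_{\ell'}$, and choose integers $L_\ell$ growing very rapidly (rate specified below). Define the $\ell$-th block
\begin{align*}
K_{2(n_\ell + j - 1)} = L_\ell M_\ell\, p_j^{(\ell)}, \quad
K_{2(n_\ell + j - 1)+1} = L_\ell M_\ell\, q_j^{(\ell)}, \quad j = 1, \ldots, N_\ell.
\end{align*}
The rescaling $a_\ell := (L_\ell M_\ell)^{-(1+\alpha)}$ then sends the $\ell$-th block exactly onto $J^{(\ell)}$: $a_\ell^{1/(1+\alpha)} K_{2(n_\ell + j - 1)} = p_j^{(\ell)}$ and $a_\ell^{1/(1+\alpha)} K_{2(n_\ell + j - 1)+1} = q_j^{(\ell)}$. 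Fix a rational $J$ and let $\{\ell_k\}$ be the subsequence with $J^{(\ell_k)} = J$; then $a_{\ell_k} \to 0$ because $L_{\ell_k} \to \infty$.

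I then apply Theorem \ref{key} with $\Phi_\infty(\zeta) = \sum_j \Phi_{p_j}^{q_j}(\zeta)$, the harmonic function inducing $d_J$. Inside the $\ell_k$-th block, the Riemann-sum error is $O(L_{\ell_k}^{-1} D^{-1})$ by Proposition \ref{conv1}; the past-tail contribution from blocks $\ell' < \ell_k$ is bounded by \eqref{est5} as $O(L_{\ell_k - 1}/L_{\ell_k} \cdot D^{-1})$; the future-tail contribution from blocks $\ell' > \ell_k$ is bounded by \eqref{est4} as $O((L_{\ell_k + 1}/L_{\ell_k})^{-\alpha+1})$, the hypothesis $S_{n_0} \ge (2R)^{1/\alpha}$ of \eqref{est4} being automatic once $k$ is large. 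Meanwhile \textbf{(A5)}--\textbf{(A6)} hold uniformly in $k$ via \eqref{est2} and \eqref{est3}, since the single block realizing $J$ already produces $\Phi_{a_{\ell_k}} \ge c_J \min\{1, 1/|\zeta|\}$ with $c_J = \tfrac{1}{2}\sum_j A_{p_j}^{q_j} > 0$. The main obstacle, and the only genuine freedom in the construction, is making all these tails vanish simultaneously as $k \to \infty$; this is resolved by any sufficiently fast inductive growth, for example $L_\ell := \ell\,(L_{\ell - 1} M_{\ell - 1})^\ell$, which forces $L_{\ell-1}/L_\ell \to 0$ at any required rate and, along every subsequence $\{\ell_k\}$ with $\ell_k \to \infty$, also forces $L_{\ell_k}/L_{\ell_k + 1} \to 0$. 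Consequently \textbf{(A3)}--\textbf{(A6)} hold with $\varepsilon_k(R) \to 0$ and uniform $C_0, C_1, \kappa, m$, so Theorem \ref{key} yields $(X, a_{\ell_k} g_\Lambda, p) \xrightarrow{GH} (\R^3, d_J, 0)$. Combining with the stability lemma and the closedness of $\mathcal{T}(X, g_\Lambda)$ completes the proof.
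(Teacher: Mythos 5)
Your proposal is correct and follows essentially the same route as the paper: interleaved blocks realizing every finite rational union of intervals infinitely often at rapidly separated scales, the {\bf (A3--6)}/Theorem \ref{key} machinery to identify each such $d_J$ as a tangent cone, and measure-theoretic approximation plus closedness of $\mathcal{T}(X,g_\Lambda)$ for general Borel $I$ (the paper splits this last step into a Vitali-cover argument for open sets followed by outer regularity, whereas you do a single outer-regularity-plus-truncation approximation). The one caveat is that your sample recursion $L_\ell=\ell(L_{\ell-1}M_{\ell-1})^\ell$ does not reference the largest endpoint $q^{(\ell-1)}_{N_{\ell-1}}$ of the preceding block, which is needed both for monotonicity of $\{K_n\}$ across blocks and for the past-tail bound via \eqref{est5} --- the paper's recursion $L_{q+1}=2^{i(q)+i(q+1)}L_q\,T_{m(q),k_{m(q)}}/S_{m(q),1}$ builds this factor in --- but your explicit allowance of ``any sufficiently fast inductive growth'' covers this.
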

\begin{proof}
Put 
\begin{align*}
\mathcal{O}_0 &:= \{ I\subset \R^+;\ I\ \mbox{is nonempty and open}\}, \\
\mathcal{O}_1 &:= 
\left\{
\begin{array}{l}
{}\\
{}
\end{array}
\right.
\bigcup_{i=1}^k (S_l,T_l) \subset \R^+; 
\left.
\begin{array}{l}
S_l,T_l\in\mathbb{Q},\ 1\le k<\infty,\\
0<S_l<T_l<S_{l+1}<\infty
\end{array}
\right\},
\end{align*}
then one can see 
$\mathcal{O}_1\subset \mathcal{O}_0\subset \mathcal{B}_+(\R^+)$. 
Since $\mathcal{O}_1$ is countable, we can label the open sets 
in $\mathcal{O}_1$ such as 
\begin{align*}
\mathcal{O}_1 = \{ I_1,I_2,I_3,\ldots\},\quad 
I_m = \bigcup_{l=1}^{k_m} (S_{m,l},T_{m,l}).
\end{align*}
Now, we fix a bijection $F:\N\to \N\times\N$ and write 
$F(q) = (i(q),m(q))$. 
Define $L_q>0$ inductively by 
\begin{align*}
L_{q+1} := 2^{i(q) + i(q+1)}L_q \cdot \frac{T_{m(q),k_{m(q)}}}{S_{m(q),1}},\quad 
L_0 := 1.
\end{align*}
Then we can define $0<K_0<K_1<\cdots$ such that 
\begin{align*}
\{ K_0<K_1<\cdots\} = \Big\{ L_q \frac{S_{m(q),l}}{S_{m(q),1}}, 
L_q \frac{T_{m(q),l}}{S_{m(q),1}};\ 1\le l\le k_{m(q)}, \ q=0,1,\ldots\Big\}.
\end{align*}

First of all we show $(\R^3, d_{I_m}, 0)\in \mathcal{T}(X,g_\Lambda)$ 
for every $I_m\in\mathcal{O}_1$. 
Fix $m$. 
For any $i\in\N$, we can take a unique $q$ such that 
$i(q)=i$ and $m(q)=m$. 
Put $a_i^\frac{1}{1+\alpha} := L_q^{-1}S_{m, 1}$, then we have 
\begin{align*}
a_i^\frac{1}{1+\alpha} L_q \frac{S_{m,l}}{S_{m,1}} = S_{m,l},\quad 
a_i^\frac{1}{1+\alpha} L_q \frac{T_{m,l}}{S_{m,1}} = T_{m,l}.
\end{align*}
Note that $L_{q+1}\ge 2^{i(q) + i(q+1)}L_q$ implies 
$L_q\to \infty$ as $i\to \infty$, 
hence $a_i\to 0$ as $i\to \infty$. 
Here, we put $\Phi=\Phi_{a_i}$ and 
$\Phi_\infty = \sum_{l=1}^{k_m}\Phi_{S_{m,l}}^{T_{m,l}}$. 
By applying Proposition \ref{conv1} and \eqref{est1}-\eqref{est5} 
with $P=1$,  the constants appearing in 
{\bf (A3-6)} are given by 
\begin{align*}
&\varepsilon = 2 a_i^\frac{1}{1+\alpha} + 2^{-i}S_{m,1} 
+\frac{2^{1-(\alpha-1)i}T_{m,k_m}^{-\alpha + 1}}{\alpha-1},\quad 
C_0 = \frac{1}{2}\sum_{l=1}^{k_m}A_{S_{m,l}}^{T_{m,l}},\\
&C_1 = \frac{\alpha 2^\frac{1}{\alpha}}{\alpha - 1},\quad 
m=1,\quad \kappa=\frac{1}{\alpha}, 
\end{align*}
if we suppose $\varepsilon$ is sufficiently small. 
One can see $\varepsilon \to 0$ as $i\to \infty$, then we obtain 
$\{ (X,a_i g_\Lambda,p)\}_i\xrightarrow{\text{GH}} (\R^3,d_{I_m},0)$.

Next we show that $(\R^3,d_I,0) \in \mathcal{T}(X,g_\Lambda)$ for
any $I\in \mathcal{O}_0$. 
To show it, we apply Vitali's Covering Theorem. 
Fix $I\in\mathcal{O}_0$ and put 
$\mathcal{I}:=\{ (a,b)\in\mathcal{O}_0;\ [a,b]\subset I\}$. 
Then $\mathcal{I}$ is a Vitali cover of $I$, hence there exists 
$\{ J_n\}_{n\in\N}\subset \mathcal{I}$ such that 
\begin{align*}
J_n\neq J_{n'}\ (\mbox{if}\ n\neq n'),\quad 
m\Big(I\backslash \bigsqcup_{n\in\N}J_n\Big) = 0,
\end{align*}
where $m$ is the Lebesgue measure. 
Put $\hat{J}_n:=\bigsqcup_{k=1}^nJ_k$. 
Since $\hat{J}_n\in\mathcal{O}_1$ holds, then 
$(\R^3,d_{\hat{J}_n},0)\in\mathcal{T}(X,g_\Lambda)$. 
If we put $\Phi_J(\zeta) := \int_{x\in J}\frac{dx}{|\zeta-(x^\alpha,0,0)|}$, 
then we can see 
\begin{align*}
|\Phi_{\hat{J}_n}(\zeta) - \Phi_I(\zeta)| 
\le \frac{ m(I\backslash \hat{J}_n) }{D} \to 0\quad (\mbox{as}\ n\to \infty), 
\end{align*}
and we can take the constants in {\bf (A3-6)} independent of $n$ 
by using Proposition \ref{lower1}.
Therefore, we obtain 
$\{ (\R^3,d_{\hat{J}_n},0)\}_n\xrightarrow{\text{GH}} (\R^3,d_I,0)$. 

Finally, let $I\in\mathcal{B}_+(\R^+)$. 
Since the Lebesgue measure is the Radon measure, 
there exist $U_n\subset \mathcal{O}_1$ for any $n$ such that 
$I\subset U$ and $m(U)\le m(I) + \frac{1}{n}$. 
Then we have $|\Phi_I(\zeta) - \Phi_{U_n}(\zeta)| \le \frac{1}{nD}$, 
we have $\{ (\R^3,d_{U_n},0)\}_n\xrightarrow{\text{GH}} (\R^3,d_I,0)$ 
by the similar argument. 
Here, the positivity of $m(I)$ is necessary since $C_0$ in 
{\bf (A5)} is given by $\int_{I}\frac{dx}{1+x^\alpha}>0$ by \eqref{est1}. 

\end{proof}

By Theorem \ref{ex3}, we can see 
$(\R^3,h_0,0)$ and $(\R^3,\frac{1}{|\zeta|}h_0,0)$ are 
also contained in $\mathcal{B}_+(\R^+)$. 
The author does not know whether any other metric spaces may 
appear as the tangent cone at infinity of $(X,g_\Lambda)$ or not.

\section{On the geometry of the limit spaces}\label{geometry}

In this section, we study the geometry of 
$(\R^3,d_0^\infty)$, 
and conclude that there are no isometry between 
$(\R^3,d_0^\infty)$ and $(\R^3,h_0)$, 
and between $(\R^3,d_0^\infty)$ and $(\R^3,\frac{1}{|\zeta|}h_0)$

\begin{prop}
$(\R^3,\frac{1}{|\zeta|}h_0)$ is the 
Riemannian cone $S^2\times \R^+$, 
where the Riemannian metric on $S^2$ is the homogeneous one whose 
area is equal to $\pi$.
\label{riemannian cone}
\end{prop}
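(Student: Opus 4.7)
The plan is a direct change-of-variables computation. I would first write the Euclidean metric in polar coordinates on $\R^3 \setminus \{0\}$ as $h_0 = dr^2 + r^2 g_{S^2}$, where $r = |\zeta|$ and $g_{S^2}$ denotes the standard round metric on $S^2$ of area $4\pi$. Substituting into the expression for our metric gives
\begin{equation*}
\frac{1}{|\zeta|}h_0 \;=\; \frac{1}{r}\bigl(dr^2 + r^2 g_{S^2}\bigr) \;=\; \frac{dr^2}{r} + r\, g_{S^2}.
\end{equation*}

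Next I would introduce the new radial variable $s := 2\sqrt{r}$, so that $r = s^2/4$ and $dr = (s/2)\,ds$. Under this change, $dr^2/r = ds^2$ and $r\,g_{S^2} = (s^2/4)\,g_{S^2}$, so the metric becomes
\begin{equation*}
\frac{1}{|\zeta|}h_0 \;=\; ds^2 + s^2\cdot \tfrac{1}{4} g_{S^2},
\end{equation*}
which is manifestly the Riemannian cone metric on $S^2 \times \R^+$ with cross section $(S^2, \tfrac{1}{4}g_{S^2})$. Since $g_{S^2}$ is homogeneous (with isometry group $SO(3)$), so is $\tfrac{1}{4}g_{S^2}$; and its total area equals $\tfrac{1}{4}\cdot 4\pi = \pi$, giving the asserted cross section.

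Finally, since the change of variable $s = 2\sqrt{r}$ is a diffeomorphism $\R^+ \to \R^+$ extending continuously to the origin, the completion $(\R^3, \tfrac{1}{|\zeta|}h_0)$ is identified isometrically with the metric completion of the Riemannian cone on $(S^2, \tfrac{1}{4}g_{S^2})$, namely $S^2 \times \R^+$ with a single cone point added at $s = 0$. There is no real obstacle to this proof; it is essentially just the elementary polar-coordinate calculation together with the observation that the homothetic change of the radial coordinate converts the conformal factor $1/r$ into cone form. The only point requiring a brief remark is that the apex corresponds to $\zeta = 0$, which is consistent with the fact that $d_1(0,\zeta) = 2\sqrt{|\zeta|}$ along any radial path (a fact already used in Proposition \ref{diam}).
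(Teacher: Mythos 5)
Your proof is correct and follows essentially the same route as the paper: write $h_0$ in polar coordinates, substitute $R=2\sqrt{r}$, and read off the cone metric $dR^2+R^2\cdot\frac{1}{4}g_{S^2}$ with cross-sectional area $\pi$. The additional remarks about the completion and the apex are fine but not needed beyond what the paper records.
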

\begin{proof}
Put $\zeta = (\zeta_1,\zeta_2,\zeta_3) \neq 0$ 
and $r=\sqrt{\zeta_1^2+\zeta_2^2+\zeta_3^2}$, 
and let $g_{S^2}$ be the standard Riemannian metric on $S^2$ with 
constant curvature and volume $4\pi$. 
Then by putting $R:=2\sqrt{r}$, we have 
\begin{align*}
\frac{1}{|\zeta|}h_0 = \frac{1}{r} ((dr)^2 + r^2 g_{S^2}) 
= (dR)^2 + R^2\cdot\frac{g_{S^2}}{4}. 
\end{align*}
\end{proof}

Next we review the notion of polar spaces, 
introduced by Cheeger and Colding in \cite{Cheeger-Colding1997}
then show that the metric space $(\R^3,d_0^\infty)$ 
never be a polar space.

Let $Y$ be a metric space, and suppose that 
there is a tangent cone $Y_y$ at $y\in Y$. 
Then we can consider tangent cones at any points in $Y_y$. 
The tangent cones obtained by repeating this process 
are called {\it iterated tangent cones} of $Y$.
A point $x$ in a length space $X$ is called a {\it pole} 
if there is a ray $\gamma:[0,\infty) \to X$ and $t \ge 0$ 
for any $\underline{x}\neq x$ such that $\gamma(0) = x$ 
and $\gamma(t) = \underline{x}$. 
Here, the ray $\gamma:[0,\infty) \to X$ is a continuous curve such that 
the length of $\gamma|_{[t_0,t_1]}$ is equal to $|\gamma(t_0)\gamma(t_1)|$.

\begin{definition}[\cite{Cheeger-Colding1997}]\normalfont
The metric space $Y$ is called a {\it polar space} 
if all of the base points of the iterated tangent cones of $Y$ are poles.
\end{definition}

For example, let $C(X)$ be a metric cone of a metric space $X$. 
Then every $\gamma$ defined by $\gamma(t):=(x,t) \in X\times \R^+ =C(X)$ 
is a ray, hence the base points of any metric cones are poles. 
Now, since $(\R^3,\frac{1}{|\zeta|}h_0)$ is a Riemannian cone of a 
smooth compact Riemannian manifold, 
then all of the iterated tangent cones are $(\R^3,\frac{1}{|\zeta|}h_0)$ itself 
or $(\R^3,h_0)$. 
Consequently, we can conclude that $(\R^3,\frac{1}{|\zeta|}h_0)$ is polar.
Obviously, $(\R^3,h_0)$ is also polar. 
We can also see in the similar way 
that $(\R^3,(1+\frac{\theta}{|\zeta|})h_0)$ is polar. 
On the other hand we can show the next proposition. 
\begin{prop}
The origin $0\in\R^3$ is not a pole 
of the metric space $(\R^3,d_0^\infty)$. 
In particular, $(\R^3,d_0^\infty)$ is neither a polar space 
nor a metric cone of any metric spaces.
\label{nonpolar}
\end{prop}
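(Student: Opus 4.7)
The plan is to prove by contradiction that $0$ is not a pole; the other conclusions of the proposition follow easily, since the homothety $I_\lambda$ satisfies $I_\lambda^* d_0^\infty = \lambda^{(\alpha+1)/(2\alpha)} d_0^\infty$ and fixes only the origin, so the vertex of any metric cone structure on $(\R^3,d_0^\infty)$ must be $0$, and since $(\R^3,d_0^\infty,0)$ is its own tangent cone at $0$ (again via the homothety), polarity of the space would force $0$ to be a pole.

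Suppose then, for contradiction, that $\gamma\colon[0,\infty)\to\R^3$ is a unit-speed ray from $0$ passing through $(1,0,0)$ at time $c:=d_0^\infty(0,(1,0,0))$. For small $\varepsilon>0$, the extended point $p_\varepsilon:=\gamma(c+\varepsilon)$ satisfies $d_0^\infty(0,p_\varepsilon)=c+\varepsilon$ and $d_0^\infty((1,0,0),p_\varepsilon)=\varepsilon$. I would exploit two asymptotic behaviors: (i) the global scaling $f(\zeta):=d_0^\infty(0,\zeta)=|\zeta|^{(\alpha+1)/(2\alpha)} F(\hat\zeta)$, which determines $f$ by an $S^1$-symmetric function $F\colon S^2\to(0,\infty)$ that is even about the axis direction $(1,0,0)$, thanks to the reflection isometry $\zeta_\C\mapsto-\zeta_\C$ of $d_0^\infty$; and (ii) the local logarithmic singularity of $\Phi_{0,1}^\infty$ at $(1,0,0)$, namely $\Phi_{0,1}^\infty(\zeta)\sim (2/\alpha)\log(1/|\zeta_\C|)$ as $|\zeta_\C|\to 0$ with $\zeta$ near $(1,0,0)$, which implies that the $d_0^\infty$-ball of radius $\varepsilon$ around $(1,0,0)$ has Euclidean size of order $\varepsilon/\sqrt{\log(1/\varepsilon)}$.

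Combining (i) and (ii), the identity $d_0^\infty((1,0,0),p_\varepsilon)=\varepsilon$ forces the Euclidean displacement $|p_\varepsilon-(1,0,0)|$ to be at most of order $\varepsilon/\sqrt{\log(1/\varepsilon)}$, while the homogeneity of $f$ combined with the evenness of $F$ at the axis direction should yield $f(p_\varepsilon)-c=o(\varepsilon)$, the evenness killing what would otherwise be a leading linear-in-angle term of size comparable to $\varepsilon$. This contradicts the ray identity $f(p_\varepsilon)=c+\varepsilon$ for all sufficiently small $\varepsilon>0$. As a warm-up, one can already rule out extension of $\gamma$ along the positive axis: a direct length estimate using the log singularity yields $d_0^\infty((1,0,0),(\lambda,0,0))\gtrsim (\lambda-1)\sqrt{\log(1/(\lambda-1))}$ for $\lambda$ close to $1$, which strictly exceeds $\lambda^{(\alpha+1)/(2\alpha)}c-c\approx \frac{(\alpha+1)c}{2\alpha}(\lambda-1)$, so no minimizing geodesic from $0$ to $(\lambda,0,0)$ passes through $(1,0,0)$.

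The main obstacle is obtaining the refined bound $F(\psi)-c=o(\psi\sqrt{\log(1/\psi)})$ for small angle $\psi$ from the axis, which is needed to upgrade the crude triangle-inequality estimate $|F(\psi)-c|\le C\psi\sqrt{\log(1/\psi)}$ (this crude bound is of the same order as $\varepsilon$ and gives no contradiction). The refinement relies on a second-variation argument that exploits the reflection symmetry, comparing minimizing paths from $0$ to $\hat n(\pm\psi)$ and their concatenation through $(1,0,0)$, to show that the first-order variation in $\psi$ vanishes—using that the Euclidean velocity of a minimizing geodesic terminating at $(1,0,0)$ must go to zero, since $\Phi_{0,1}^\infty$ blows up there. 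Once this refinement is in hand the contradiction above is immediate, completing the proof that $0$ is not a pole.
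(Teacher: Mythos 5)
Your reduction of the polar-space and metric-cone claims to the statement that $0$ is not a pole, and your choice of $(1,0,0)$ as the test point, both match the paper. But your route to showing that no ray from $0$ passes through $(1,0,0)$ is genuinely different from the paper's, and as written it has a gap at its central step. The paper's argument is soft and needs no asymptotics: if $\gamma$ is a ray with $\gamma(0)=0$ and $\gamma(t_0)=(1,0,0)$, choose $\delta>0$ so small that $\gamma$ exits the tube $\{|\zeta_\C|<\delta\}$ both before and after $t_0$ (otherwise $\gamma$ would have infinite length near $t_0$), take the component $(a_0,a_1)\ni t_0$ of the excursion into the tube, and replace $\gamma|_{[a_0,a_1]}$ by the projected path $P_\gamma(t)=(\gamma_\R(t),\gamma_\C(a_0))$ of Lemma \ref{lowreplace}, rotating one segment so the endpoints match. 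The monotonicity of $\Phi_0^\infty$ in $|\zeta_\C|$ (condition {\bf (A4)}) makes the new path strictly shorter, since the original one actually reaches $|\zeta_\C|=0$; this contradicts minimality of the ray between $\gamma(0)$ and $\gamma(a_1)$.

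The gap in your version: everything hinges on the refined bound $F(\psi)-F(0)=o\bigl(\psi\sqrt{\log(1/\psi)}\bigr)$, and the mechanism you offer for it --- evenness of $F$ under $\zeta_\C\mapsto-\zeta_\C$ killing the first-order term --- is not sufficient. An even function need not satisfy $F(\psi)-F(0)=o(\psi)$ (consider $F(0)+|\psi|$), and here the natural candidate behaviour is exactly a symmetric corner: the final descent from angular distance $|\psi|$ to the axis costs about $\int_0^{|\psi|}\sqrt{\log(1/s)}\,ds\asymp|\psi|\sqrt{\log(1/|\psi|)}$, so one expects $F(0)-F(\psi)$ to be of this same order --- even in $\psi$, yet with no first-order Taylor expansion at $0$. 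What would actually rescue your computation is not the symmetry but the sign, namely an inequality $F(\psi)\le F(0)$ (off-axis unit vectors are no farther from $0$ than the axis point); but proving that inequality essentially requires the same push-away-from-the-axis comparison as Lemma \ref{lowreplace}, at which point one may as well run the paper's argument directly. The ``second-variation argument'' you invoke is not carried out, and the auxiliary facts it would need (existence of minimizing geodesics into the singular point, vanishing Euclidean velocity there, differentiability of $F$ at the axis direction) are all unproved and at least the last is doubtful. Your warm-up estimate along the axis, the homothety bookkeeping, and the reduction of the cone/polar statements are fine, but they do not close this gap.
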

\begin{proof}
First of all we show that $0\in\R^3$ is not a pole with respect to $d_0^\infty$. 
Put $p:=(1,0,0) \in \R^3$, and suppose that there is a ray 
$\gamma:[0,\infty) \to \R^3$ such that $\gamma(0) = 0$ and $\gamma(t_0)=p$ 
for some $t_0 >0$. 
Then we have 
\begin{align*}
d_0^\infty (\gamma(s_0),\gamma(s_1)) 
= \int_{s_0}^{s_1}\sqrt{\Phi_0^\infty(\gamma(t))}|\gamma'(t)|dt
\end{align*}
for any $0\le s_0<s_1$.

For $\delta >0$, let 
\begin{align*}
A_\delta := \{ t\in \R; |\gamma_\C(t)| \ge \delta \}.
\end{align*}
Then there is a sufficiently small $\delta$ such that 
$A_\delta \cap (0,t_0) \neq \emptyset$ and $A_\delta \cap (t_0,\infty) \neq \emptyset$.
This is because the length of $\gamma|_I$ becomes infinity 
for any small interval $I\subset \R$ if not. 
Since $A_\delta$ is closed and does not contain $t_0$, we can take 
a connected component $(a_0,a_1)$ of 
$\R\backslash A_\delta$ containing $t_0$.
Then we can see that $|\gamma_\C(a_0)|=|\gamma_\C(a_1)|=\delta$ and 
$|\gamma_\C(t)| < \delta$ for any $t\in (a_0,a_1)$. 
Now define $\tilde{\gamma}:[0,a_1]\to X$ by 
\[ \tilde{\gamma}(t) := 
\left \{
\begin{array}{cc}
(\gamma_\R(t),e^{i\theta}\gamma_\C(t)) & (0\le t \le a_0) \\
e^{i\theta}P_{\gamma|_{[a_0, a_1]}}(t) & (a_0\le t \le a_1) 
\end{array}
\right.
\]
where $\theta$ is defined by $e^{i\theta}\gamma_\C(a_0) = \gamma_\C(a_1)$. 
Recall that $P_{\gamma|_{[a_0, a_1]}}$ is already defined 
in Lemma \ref{lowreplace}. 
Then by applying Lemma \ref{lowreplace}, 
we can see that the length of $\tilde{\gamma}$ is strictly less than 
the length of $\gamma|_{[0,a_1]}$, 
therefore $\gamma$ is not the ray, which is the contradiction. 
Hence $0\in\R^3$ is not the pole.

Now we can check that the $\R^+$-action on $\R^3$ defined by 
the scalar multiplication 
is homothetic with respect to $d_0^\infty$, 
then the tangent cone of $(\R^3,d_0^\infty)$ 
at $0$ is itself. 
Consequently, $(\R^3,d_0^\infty)$ is not a polar space.

Suppose that $(\R^3,d_0^\infty)$ is the metric cone 
of some metric spaces $X$, 
then the origin $0$ is nothing but the base point of the metric cone.
Since the base point of the metric cone is always a pole, 
hence we have the contradiction. 
\end{proof}
Now we obtain the next corollary.
\begin{cor}
There is no isometry between 
$(\R^3,d_0^\infty)$ and $(\R^3,h_0)$, and 
between $(\R^3,d_0^\infty)$ and $(\R^3,\frac{1}{|\zeta|}h_0)$.
\end{cor}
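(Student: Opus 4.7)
The plan is to reduce the corollary to the non-polarity result already established in Proposition \ref{nonpolar}. First I would verify that being a polar space is invariant under isometries of metric spaces. This is essentially tautological: if $\varphi: X \to Y$ is a bijective isometry, then $\varphi$ sends rays to rays (since the length functional only depends on the metric), so it sends poles to poles; moreover $\varphi$ induces isometries between tangent cones at corresponding points, hence preserves the entire tree of iterated tangent cones. Therefore $X$ is polar if and only if $Y$ is.

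Next I would confirm that both $(\R^3, h_0)$ and $(\R^3, \tfrac{1}{|\zeta|} h_0)$ are polar spaces. For $(\R^3, h_0)$, every point is a pole via the straight-line ray through it, and every iterated tangent cone is again isometric to $(\R^3, h_0)$, so polarity is immediate. For $(\R^3, \tfrac{1}{|\zeta|} h_0)$, by Proposition \ref{riemannian cone} this is a Riemannian cone over a smooth compact Riemannian manifold (namely $S^2$ with a rescaled round metric), so its iterated tangent cones are exhausted by $(\R^3, \tfrac{1}{|\zeta|} h_0)$ itself (at the apex) and $(\R^3, h_0)$ (at the smooth points); both are polar. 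Both of these observations are essentially stated in the discussion preceding Proposition \ref{nonpolar}, so this step requires no new work.

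Finally, by Proposition \ref{nonpolar} the space $(\R^3, d_0^\infty)$ is not polar. Combining this with the isometric invariance of polarity and the polarity of the other two spaces, no isometry can exist between $(\R^3, d_0^\infty)$ and either $(\R^3, h_0)$ or $(\R^3, \tfrac{1}{|\zeta|} h_0)$, which is the claim.

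The main obstacle in this argument—demonstrating the non-polarity of $(\R^3, d_0^\infty)$—has already been overcome in Proposition \ref{nonpolar}; the only remaining content is the elementary isometric invariance of the polar property, which is purely formal. Thus the corollary is a direct consequence of the preceding results and requires no additional estimates or constructions.
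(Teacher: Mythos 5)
Your argument is correct and is exactly the one the paper intends: the corollary is deduced from the polarity of $(\R^3,h_0)$ and $(\R^3,\frac{1}{|\zeta|}h_0)$ (established in the discussion before Proposition \ref{nonpolar}), the non-polarity of $(\R^3,d_0^\infty)$ (Proposition \ref{nonpolar}), and the elementary fact that polarity is an isometry invariant. No gaps; this matches the paper's (implicit) proof.
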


\bibliography{asymptotic}
\end{document}